\documentclass[a4paper,11pt,reqno]{amsart}
\usepackage{geometry} 
\usepackage{mathtools}
\usepackage{graphicx}
\usepackage{paralist}
\usepackage[utf8]{inputenc}
\usepackage[T1]{fontenc}
\usepackage{csquotes}
\usepackage[all]{xy}
\usepackage{chngcntr}
\usepackage{amsthm}
\usepackage{amsmath}
\usepackage{amsfonts}
\usepackage{amssymb}
\usepackage{mathrsfs}
\usepackage{MnSymbol}
\usepackage{tikz-cd}
\usepackage{tikz, tikz-cd}
\usepackage{mathdots}
\usepackage{stmaryrd}
\usepackage{hyperref}
\usepackage{cleveref}
\usepackage{fancyhdr}
\usepackage{xcolor}

\newtheorem{theorem}{Theorem}
\newtheorem{thm}{Theorem}[section]
\newtheorem{lem}[thm]{Lemma}
\newtheorem{prop}[thm]{Proposition}
\newtheorem{cor}[thm]{Corollary}
\newtheorem{rem}[thm]{Remark}
\theoremstyle{remark}

\theoremstyle{claim}

\newtheorem*{thm*}{Theorem}

\theoremstyle{definition}
\newtheorem{definition}[thm]{Definition}

\theoremstyle{example}

\theoremstyle{convention}
\newtheorem{convention}[thm]{Convention}

\counterwithin{equation}{section}

\theoremstyle{convention}

\newcommand{\bbN}{\mathbb{N}}

\newcommand{\bbR}{\mathbb{R}}

\newcommand{\bbH}{\mathbb{H}}

\DeclareMathOperator{\Ai}{Ai}
\DeclareMathOperator{\Bi}{Bi}

\DeclareMathOperator{\jac}{jac}
\newcommand{\jaco}{\jac_{\circ}}

\newcommand{\Vo}{\mathcal{V}_{\circ}}
\newcommand{\Ao}{A_{\circ}}
\newcommand{\Lo}{\mathcal{L}_{\circ}}
\newcommand{\vo}{v^{\circ}}
\newcommand{\alphao}{\alpha^{\circ}}

\newcommand{\tildeOmega}{\widetilde{\Omega}}

\newcommand{\ho}{h^\circ}

\newcommand{\tildeDelta}{\widetilde{\Delta}}
\newcommand{\tildeDeltao}{\widetilde{\Delta}_{\circ}}
\newcommand{\lambdaODE}{\tilde{\lambda}^{\rm ODE}}
\newcommand{\lambdao}{\tilde{\lambda}^{\circ}}
\newcommand{\lambdacirc}{\lambda^{\circ}}

\newcommand{\uGuess}{\tilde{u}^{\rm Guess}}
\newcommand{\uo}{\tilde{u}^{\circ}}
\newcommand{\hGuess}{\tilde{h}^{\rm Guess}}

\DeclareMathOperator{\Div}{div}
\newcommand{\Go}{G_{\circ}}

\calclayout

\begin{document}
\title[Constant potentials do not minimize in negatively curved manifolds]
{Constant potentials do not minimize the fundamental gap on convex domains in negatively curved Hadamard manifolds}
\author{Frieder J\"ackel}
\address{Département de Mathématiques \\ Universit\'e Libre de Bruxelles \\ Campus de la Plaine, CP 210, Boulevard du Triomphe, B-1050 Bruxelles, Belgique}
\email{frieder.jackel@ulb.be}
\thanks
{AMS subject classification: 35P15, 49R05, 58C40 \\
The author was supported by the grant FNRS EoS O.0033.22F of the Fonds de la Recherche Scientifique (F.R.S.-FNRS)}
\date{February 2, 2026}

\begin{abstract}
We show that for every negatively curved Hadamard manifold $X$ and every $D > 0$ there exists a convex domain $\Omega \subseteq X$ with diameter $D$ and a convex potential $V$ on $\Omega$ such that the fundamental gap of the operator $-\Delta+V$ is strictly smaller than the fundamental gap of $-\Delta$. This shows that the second part of the fundamental gap conjecture is wrong in every negatively curved manifold. This is significantly harder than in the previously known case of hyperbolic space because, due to the lack of symmetry, one has to study a true PDE, and not just an ODE.
\end{abstract}

\maketitle

\tableofcontents

\newpage

\section{Introduction}\label{sec: Introduction}

\subsection{Statement of the main result}\label{subsec: Main result}

Understanding the relation between the spectrum of the Laplacian and the geometry of the underlying domain is a fundamental problem in Geometric Analysis with a long and fruitful history. In this article we study a spectral invariant that has attracted a lot of attention in recent years, the so-called fundamental gap.

Namely, let $X$ be a complete Riemannian manifold, $\Omega \subseteq X$ a connected, open, and bounded domain, and $V$ a potential on $\Omega$. Then, the \textit{fundamental gap} $\Gamma(\Omega;V)$ is the difference between the first two Dirichlet eigenvalues of the operator $-\Delta+V$, i.e.,
\[
	\Gamma(\Omega;V) \coloneq \lambda_2-\lambda_1 > 0,
\]
where $\lambda_1 < \lambda_2 \leq \dots$ are the eigenvalues of $-\Delta+V$ on $\Omega$ with Dirichlet boundary condition. If the potential is constant, we simply write $\Gamma(\Omega)$.

In their celebrated work \cite{AC11} on the solution of the Fundamental Gap Conjecture, Andrews--Clutterbuck have shown that for every bounded \emph{convex} domain $\Omega \subseteq \bbR^n$ and every \emph{convex} potential $V$ on $\Omega$ we have
\[
	\frac{3\pi^2}{{\rm diam}(\Omega)^2} \leq \Gamma(\Omega) \leq \Gamma(\Omega;V).
\]
We refer the reader to the introduction of \cite{AC11} for more information about the history and earlier work on this subject.

The main result of this article shows that the second part of the fundamental gap conjecture is wrong in every negatively curved Hadamard manifold for general convex domains and convex potentials.

\begin{theorem}\label{Main Theorem}
Let $X$ be a Hadamard manifold with negative sectional curvature. Then, for every $D > 0$, there exists a \emph{convex} domain $\Omega \subseteq X$ and a \emph{convex} potential $V$ on $\Omega$ such that ${\rm diam}(\Omega)=D$ and $\Gamma(\Omega;V) < \Gamma(\Omega)$.
\end{theorem}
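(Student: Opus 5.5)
We use a perturbative, first-variation argument. For a smooth family of convex potentials $V_t = tW$ with $W$ convex, the simple eigenvalues $\lambda_1(t),\lambda_2(t)$ of $-\Delta+tW$ satisfy
\[
	\frac{d}{dt}\Big|_{t=0}\Gamma(\Omega;tW) = \int_\Omega W\,(u_2^2-u_1^2)\,d\mathrm{vol},
\]
where $u_1,u_2$ are the $L^2$-normalized Dirichlet eigenfunctions of $-\Delta$ on $\Omega$. It therefore suffices to find a convex domain $\Omega$ of diameter $D$ with simple $\lambda_2$ and a convex function $W$ with $\int_\Omega W(u_2^2-u_1^2)<0$. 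By adding a constant we may normalize so that only the convexity of $W$ matters. Taking $W$ to be a convex function of a single ``long'' direction, the task reduces to showing that the measure $(u_2^2-u_1^2)\,d\mathrm{vol}$, pushed forward to that direction, pairs negatively with some convex function. Equivalently, this pushforward must fail the second-order dominance condition that would force $\int W(u_2^2-u_1^2)\ge 0$ for all convex $W$.

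To make the eigenfunctions computable, we take thin domains. Fix a unit-speed geodesic $\gamma:[0,D']\to X$ and let $\Omega_\varepsilon$ be a tubular neighbourhood of $\gamma$ with cross-sectional radius $\varepsilon\,r(s)$, where $r$ is a suitably chosen concave profile. The boundary must be convex, which is possible in a Hadamard manifold since distance tubes around geodesics are convex; we also adjust $D'$ so that the diameter is exactly $D$. In Fermi coordinates $(s,y)$, the Laplacian on $\Omega_\varepsilon$ separates to leading order. The transverse Dirichlet eigenvalue $\mu/(\varepsilon r(s))^2$ plus the volume density $J(s,y)$ produce an effective one-dimensional Schrödinger operator on $[0,D']$:
\[
	L_\varepsilon = -\frac{d^2}{ds^2} + \frac{\mu}{\varepsilon^2 r(s)^2} + Q(s) + O(\varepsilon),
\]
where $Q$ collects the contribution of the curvature of $X$ along $\gamma$ through the Jacobi fields, i.e.\ the second-order expansion of the volume density. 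In hyperbolic space $Q$ is explicit; in a general negatively curved $X$ it depends on $R(\cdot,\dot\gamma)\dot\gamma$ and varies with $s$. A standard dimension-reduction / Born--Oppenheimer argument, using resolvent estimates, then shows that $u_1,u_2$ are $O(\varepsilon)$-close in the relevant norms to $\phi_1(s)\psi_s(y)$ and $\phi_2(s)\psi_s(y)$. Here $\phi_i$ are the eigenfunctions of the ODE, and the gap converges to the ODE gap.

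In the ODE limit, we need an operator $-\phi''+(\text{profile potential})+Q$ whose first two eigenfunctions violate the dominance condition. The negative curvature enters through $Q$, which is not convex-compatible with the flat case. The idea is to mimic the known hyperbolic counterexample: for the 1D operator with potential $q$, the first variation against convex $W$ can be negative once $q$ has a specific nonconvex shape. Andrews--Clutterbuck's log-concavity argument fails precisely because the weight is not log-concave. We choose $r$, the scale, and the position of $\gamma$ so that the effective potential reproduces such a shape, exhibit an explicit convex $W(s)$ (e.g.\ a piecewise-linear function $|s-s_0|$, smoothed) with negative pairing for the ODE eigenfunctions, and then lift $W$ to $X$ as $W\circ\pi$, where $\pi$ is the nearest-point projection onto $\gamma$, or better as a convex function such as the Busemann function or the distance to a totally geodesic-like set. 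Convexity on $X$ then holds because distance functions to geodesics are convex in Hadamard manifolds. Finally, by the $O(\varepsilon)$ closeness, the negative sign persists for small $\varepsilon$, and taking $t>0$ small gives $\Gamma(\Omega;tW)<\Gamma(\Omega)$.

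The main obstacle is the PDE-to-ODE reduction with enough precision, since the abstract stresses that there is no symmetry. The sign of the first variation in the limit may be degenerate at leading order: in the exact thin limit with constant curvature the effect appears only at order involving the curvature, that is, through $Q$. We must then control the eigenfunction errors to an order beyond the one where the effect appears. We would first try to choose $D'$ and $r$ so that the ODE effect is $O(1)$ rather than $O(\varepsilon^2)$, keeping $D$ fixed while the thickness $\varepsilon\to0$ scales independently of the curvature. Then only qualitative convergence $u_i\to\phi_i\psi$ in $L^2$ is needed. We would also use that the sectional curvature is bounded above by a negative constant on the compact tube, so that $Q$ has a definite sign.
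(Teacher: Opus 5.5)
Your outline has the same framework as the paper: a Hellmann--Feynman first variation, thin domains around a geodesic, and approximate separation of variables. There is, however, a genuine gap. The step that actually produces the sign is only asserted (``we choose $r$, the scale, and the position of $\gamma$ so that the effective potential reproduces such a shape''), and the mechanisms you point to would not produce it.

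\textbf{Why your suggested mechanisms fail.}
\begin{enumerate}
\item In the thin limit the profile term $\mu/(\varepsilon^2 r(s)^2)$ has size and variation of order $\varepsilon^{-2}$. The eigenfunctions therefore localize at the scale set by that term, and the curvature term $Q=O(1)$ is only a lower-order correction.
\item With a constant profile, $Q$ is constant along $\gamma$ in $\mathbb{H}^n$ by homogeneity. So the shape of $Q$ cannot be the mechanism for every $X$.
\item If $r$ is concave with an interior nondegenerate maximum, the reduced problem is a harmonic oscillator at scale $\varepsilon^{1/2}$. For your example $W=|s-s_0|$ centred there, the normalized Hermite functions give $\int_{\mathbb{R}}|x|(\phi_2^2-\phi_1^2)\,dx=1/\sqrt{\pi}>0$, so the gap \emph{increases}.
\item A nonconvex effective potential is not what is needed; a linear one suffices.
\item The lift $W\circ\pi$ is not convex. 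In Fermi coordinates $(s,\rho)$ on $\mathbb{H}^2$ its Hessian is $\begin{pmatrix} W'' & -W'\tanh\rho\\ -W'\tanh\rho & 0\end{pmatrix}$, which is indefinite wherever $W'\neq0$ and $\rho\neq 0$. The function $d(\cdot,\gamma)$ does not depend on $s$ at all, so it cannot realize $W(s)$ either.
\end{enumerate}

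\textbf{The missing idea: the Airy regime at the wide end.} The paper takes cross-sections $\{\exp_p(t\nu_p):p\in B^\Sigma_\varepsilon(o)\}$, with $\Sigma=\exp_o(H)$ totally geodesic at $o$.
\begin{itemize}
\item Since $\sec<0$, the Jacobi fields with $J'(0)=0$ grow. Hence the transverse eigenvalue $\mu_1(t)$ is strictly decreasing for $t>0$; this is where negative curvature is used.
\item The domain is cut off at $t=t_0$, where $\varepsilon^{-2}\mu_1(t)$ is minimal with slope of order $\varepsilon^{-2}$.
\item Set $x=\delta^{-1/3}(t_0-t)$ with $\delta=\varepsilon^2/(-\mu_1'(t_0))$. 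The model ODE then becomes the Airy operator $-\frac{d^2}{dx^2}+x$ on a half-line.
\item For any $P(t)$ increasing at $t_0$ one gets $\delta^{-1/3}\int P(u_2^2-u_1^2)\to -P'(t_0)\cdot\frac23(a_2-a_1)<0$. The reason is that the second Airy mode reaches further from the wall, into the region where $P$ is smaller.
\item Convexity of the domain, and of $P(t)$ (with $P',P''\ge 0$ and $P=0$ for $t\le t_P$), comes from the positive shape operator of the parallel hypersurfaces $\Sigma_t$, $t>0$, via Riccati comparison. It does not come from a nearest-point lift.
\end{itemize}

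\textbf{The approximation you need is quantitative.} The signal has size $\delta^{1/3}$ in the original variable, so qualitative $L^2$ convergence is not enough. You need convergence in the rescaled variable against polynomial weights. The paper obtains this from a slice-energy exponential decay estimate together with an approximate separation of variables, with error $O(\delta^{1/12})$. Your hope of an $O(1)$ effect with a nonconstant profile is incompatible with this localization.
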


This was recently shown by the author and Clutterbuck--Nguyen \cite{CJN25} in the case of hyperbolic space. However, the general case is much harder than the hyperbolic case. The reason for this is the following:~Due to the large symmetry group of $\bbH^n$, one can choose convex domains $\Omega \subseteq \bbH^n$ that allow a separation of variables ansatz. Consequently, the Dirichlet eigenvalue problem for these domains reduces to an ODE eigenvalue problem. 
In contrast, due to the lack of symmetries, in a general negatively curved Hadamard manifold it is impossible to directly reduce the problem to an ODE, and one is forced to study a true PDE. Nonetheless, we build on the ideas developed in \cite{CJN25}.

After the solution of the Fundamental Gap Conjecture in $\bbR^n$ by Andrews--Clutterbuck, a tremendous amount of attention has been given to the first part of the Fundamental Gap conjectures, i.e., whether $\Gamma(\Omega)$ or $\Gamma(\Omega;V)$ can be bounded from below in terms of the diameter, in manifolds different from $\bbR^n$. Namely, Dai, He, Seto, Wang, and Wei (in various subsets) \cite{SWW19}, \cite{DSW21}, \cite{HWZ20} have shown that the same bound $\Gamma(\Omega;V) \geq \frac{3\pi^2}{{\rm diam}(\Omega)^2}$ holds for convex subsets $\Omega \subseteq \mathbb{S}^n$ (though a stronger version than convexity is required for non-constant potentials - \cite[Equation (1.4)]{CWY25}). On the other hand, Bourni--Clutterbuck--Nguyen--Stancu--Wei--Wheeler \cite{BCN+22} showed that the first part of the Fundamental Gap Conjecture fails in a very strong sense in $\bbH^n$. Namely, they showed that for all $n \geq 2$ and $D,\varepsilon > 0$ there exists a convex domain $\Omega \subseteq \bbH^n$ of diameter $D$ such that $\Gamma(\Omega) < \frac{\varepsilon}{D^2}$. This was later extended to all negatively curved Hadamard manifolds by Khan--Nguyen \cite{KN24}. However, we still have $\Gamma(\Omega) \geq c(n,D) > 0$ for \emph{horo-convex} domains $\Omega \subseteq \bbH^n$ as was shown by Khan--Saha--Tuerkoen \cite{KST25} and Khan--Tuerkoen \cite{KT24} (an alternative proof for horo-convex domains of small diameter was recently given by Wei--Xiao \cite{WX25}).

In contrast, almost no attention has been given to the second part of the Fundamental Gap Conjecture on manifolds different from $\bbR^n$, i.e., whether $\Gamma(\Omega;V)$ is, among all convex potentials, minimized by the constant ones. In fact, apart from some work in the graph case \cite{AAHK24}, the recent \cite{CJN25} was the first one to adress this question after \cite{AC11}. The present article is, to the knowledge of the author, thus only the second one to do so after \cite{AC11}, and it deals with a much broader class of manifolds than \cite{CJN25}.

When proving the failure of the first part of the Fundamental Gap Conjecture in hyperbolic space (see \cite{BCN+22}) or in general negatively curved Hadamard manifolds (see \cite{KN24}), the key point is to find domains $\Omega$ such that the first eigenfunction of $-\Delta$ is very small in the so-called neck region of these domains. This is, however, much weaker than what is required for the failure of the second part (see (\ref{eq: core of strategy}) below). 

\subsection{Strategy}\label{subsec: strategy}
At its core, we use the same variational strategy as in \cite{CJN25}. Namely, we will construct a convex domain $\Omega \subseteq X$ and a convex potential $P$ on $\Omega$ such that
\begin{equation}\label{eq: core of strategy}
	\int_\Omega P (u_2^2-u_1^2) \, d{\rm vol} < 0,
\end{equation}
where $u_k$ ($k=1,2$) is the $k$-th eigenfunction of the Laplace operator $-\Delta$ on $\Omega$ with Dirichlet boundary condition. Namely, if (\ref{eq: core of strategy}) is satisfied, then, for $r > 0$ small enough, the fundamental gap of the potential $V_r=r P$ is strictly smaller than that of the constant potential. Indeed, by the Hellmann--Feynman identity, the derivative $\left. \frac{d}{dr}\Gamma(\Omega;V_r)\right|_{r=0}$ is exactly the integral in (\ref{eq: core of strategy}) (one can apply the Hellmann--Feynman identity thanks to \cite[Theorem 3.9 on p.~392]{Ka95}). We mention that, for our chosen domains, the first and second eigenvalue will turn out to be simple (see (\ref{eq: eigenvalue gap PDE})).



Even though in general negatively curved Hadamard manifolds the eigenvalue problem for $-\Delta$ can \emph{not} directly be reduced to an ODE, the main insight of the present article is that, for appropriately chosen domains, the eigenfunctions of $-\Delta$ can still be well-approximated by a separation of variables ansatz.

The basic idea to achieve this is roughly as follows:~We choose a sequence of domains $\Omega_\varepsilon \subseteq X$ that concentrate more and more around a fixed geodesic $\gamma_o$ as $\varepsilon \to 0$. This allows us to approximate $\Delta$ in $\Omega_\varepsilon$ by a elliptic operator $\Delta_\circ$ whose coefficients, in a well-chosen local coordinate system, only depend on the parameter $t$ parametrizing the geodesic $\gamma_o$. We then define a \emph{model ODE} for $-\Delta_\circ$, and show that
\begin{enumerate}[(1)]
	\item the model ODE is, after appropriate rescaling, a small perturbation of the Airy equation, so that its eigenfunctions are also well-approximated by the eigenfunctions of the Airy equation, and
	\item that the eigenfunctions of $\Delta_\circ$ can be obtained, up to small errors, by a separation of variables ansatz from the eigenfunctions of the model ODE.
\end{enumerate}
After correctly exploiting the assumption that $X$ is negatively curved (see \Cref{lem: Monotonicity of mu_1}), (1) can be achieved by adapting the arguments from \cite{CJN25}. On the other hand, (2) requires new ideas, and we refer the reader to the comments after \Cref{prop: Separation of variables} for an outline thereof. Using properties of the eigenfunctions of the Airy equation, we will be able to verify (\ref{eq: core of strategy}) for any potential $P$ that is strictly increasing along $\gamma_o$.

\subsection{Structure of the article}
The article is structured as follows. In \Cref{sec: set up} we properly introduce the necessary objects. Namely, in \Cref{subsec: domain and potential} we define the convex domains and convex potentials we will consider, while \Cref{subsec: operators and function spaces} introduces the operators $\Delta,\Delta_\circ$, their appropriate rescalings $\tildeDelta,\tildeDeltao$, and the associated function spaces. In \Cref{sec: Preliminaries} we collect the necessary background material about perturbation theory (\Cref{subsec: perturbation estimates}) and the Airy equation (\Cref{subsec: Airy}). The heart of the article is contained in \Cref{sec: Laplace_0}. Namely, after introducing the necessary objects in \Cref{subsec: eigenfcts of Laplace_0 - set up}, we introduce and analyze the model ODE in \Cref{subsec: Model ODE}, and then, in \Cref{subsec: Separation of variables}, show that the eigenfunctions of $\tildeDeltao$ are up to small errors obtained by a separation of variables Ansatz from the eigenfunctions of the model ODE. \Cref{subsec: from Laplace_0 to Laplace} explains why the eigenfunctions of $\tildeDelta$ are well-approximated by those of $\tildeDeltao$. Finally, the proof of \Cref{Main Theorem} will be presented in \Cref{subsec: Proof of Main Thm}.

\section{Set Up}\label{sec: set up}

\subsection{Domain and potential}\label{subsec: domain and potential}

Let $X$ be a Hadamard manifold with negative sectional curvature. Fix a basepoint $o \in X$ and a codimension one linear subspace $H \subseteq T_oX$, and consider the submanifold $\Sigma \coloneq \exp_o(H) \subseteq X$. Then
\begin{equation}\label{eq: Sigma 2nd fundamental form}
\text{the second fundamental form of } \Sigma \text{ vanishes at } o.
\end{equation}
Finally, we also fix a unit normal vector field $\nu$ along $\Sigma$. 


\begin{definition}[Domain]\label{def: Domain}
For all $t_0 >0$, $ \tau_0 \geq 0 $, and $\varepsilon > 0$ we define
\[
	\Omega \coloneq \Omega(\varepsilon; -\tau_0,t_0) \coloneq \Big\{\exp_{p}(t \nu_p) \in X \, \big| \, p \in B^{\Sigma}_\varepsilon(o) \text{ and } t \in (-\tau_0,t_0) \Big\},
\]
where $B^{\Sigma}_\varepsilon(o)$ denotes the ball of radius $\varepsilon$ centered at $o$ in $\Sigma$. 
\end{definition}

The next lemma shows that, for appropriate choices, these domains are convex.

\begin{lem}\label{lem: convex domain}
For all $t_0 > 0$ and $\tau_0 > 0$ there exists $\varepsilon_0=\varepsilon_0(t_0,\tau_0) > 0$ such that the domain $\Omega(\varepsilon; -\tau_0,t_0)$ is convex for all $\varepsilon \in (0,\varepsilon_0]$.
\end{lem}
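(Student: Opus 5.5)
The plan is to prove convexity through a local criterion. In a Hadamard manifold, a connected, bounded, open domain with piecewise smooth boundary is convex if two things hold. First, each smooth boundary stratum has positive semidefinite second fundamental form with respect to the inward normal. Second, at the corners the domain is locally an intersection of the half-spaces determined by the adjacent strata, i.e.\ the interior dihedral angles are at most $\pi$. The reason is that a geodesic segment joining two points of $\Omega$ cannot leave and re-enter through a locally convex boundary; one can make this rigorous by a first-exit/maximum-principle argument, or by approximating $\Omega$ from inside by smooth domains with positive-semidefinite second fundamental form. I would work in Fermi coordinates $(p,t)\mapsto \exp_p(t\nu_p)$ around $\Sigma$. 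These are a diffeomorphism near $\overline{\Omega}$ for $\varepsilon$ small, since $X$ is Hadamard and the normal exponential map of $\Sigma$ is injective near $\overline{B^\Sigma_\varepsilon(o)}\times[-\tau_0,t_0]$. The boundary of $\Omega$ splits into three strata: the top cap $\Sigma_{t_0}\cap\partial\Omega$, the bottom cap $\Sigma_{-\tau_0}\cap\partial\Omega$, and the lateral piece $L=\{\exp_p(t\nu_p): p\in\partial B^\Sigma_\varepsilon(o)\}$.

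\textbf{Caps.} Let $S_p(t)$ be the shape operator of the parallel hypersurface $\Sigma_t$ at $\exp_p(t\nu_p)$. It satisfies the Riccati equation $S'+S^2+R_{\partial_t}=0$, where $R_{\partial_t}=R(\cdot,\partial_t)\partial_t$. Along $\gamma_o(t)=\exp_o(t\nu_o)$ we have $S_o(0)=0$ by \eqref{eq: Sigma 2nd fundamental form}, and $-R_{\partial_t}$ is positive definite because the sectional curvature is negative. Hence $S_o(t)$ is positive definite for $t>0$ and negative definite for $t<0$, uniformly on $[t_0/2,t_0]$ and $[-\tau_0,-\tau_0/2]$; this is exactly where $t_0,\tau_0>0$ is used. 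Since $S_p(0)=O(\varepsilon)$ by smoothness of $\Sigma$, continuous dependence of the Riccati solution gives, for $\varepsilon\le\varepsilon_0$, that $S_p(t_0)>0$ and $S_p(-\tau_0)<0$ for all $p\in B^\Sigma_\varepsilon(o)$. With respect to the inward normals ($-\partial_t$ on top, $+\partial_t$ at the bottom), this means both caps are strictly convex.

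\textbf{Lateral piece and corners.} Let $N$ be the inward unit normal of $L$; it is tangent to $\Sigma_t$, since $\partial_t$ is tangent to $L$. On $T L=\mathrm{span}\{\partial_t\}\oplus T(\partial B_\varepsilon)_t$ the second fundamental form has the following entries:
\[
\mathrm{II}(\partial_t,\partial_t)=0,\qquad \mathrm{II}(\partial_t,e)=\langle S_p(t)e,N\rangle,\qquad \mathrm{II}(e,e)\approx \varepsilon^{-1}|e|^2 .
\]
The first vanishes because $t\mapsto\exp_p(t\nu_p)$ is a geodesic. The third comes from the small sphere in $\Sigma_t$. At the corners, $\partial_t$ is transverse to the caps and $N$ is tangent to them, so the dihedral angles are $\pi/2+O(\varepsilon)$, which is fine.

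\textbf{Main obstacle.} Because of the zero $tt$-entry, the $2\times2$ blocks of $\mathrm{II}$ are positive semidefinite only if the mixed term $\langle S_p(t)e,N\rangle$ vanishes for all $e\perp N$ tangent to the $\varepsilon$-sphere. In the umbilic (hyperbolic) case $S=\tanh(t)\,\mathrm{Id}$, so the mixed term is $0$ and the argument closes. In general the $O(1)$ mixed entry against a zero diagonal entry is a genuine threat to local convexity of $L$. The first thing I would try is to show that the off-diagonal block of $S_o(t)$ between the radial direction $N$ and the sphere directions is $O(\varepsilon)$ after averaging, or that it can be absorbed into the large $\varepsilon^{-1}$ tangential curvature via the estimate $|\mathrm{II}(\partial_t,e)|^2\le \mathrm{II}(\partial_t,\partial_t)\,\mathrm{II}(e,e)$. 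As computed, though, the absorption step requires $\mathrm{II}(\partial_t,\partial_t)>0$, which fails here. If this does not work, I would replace the local criterion by a direct geodesic argument. I would take a geodesic $c$ with endpoints in $\Omega$ and show that $\rho\circ c$ has no interior maximum $\ge\varepsilon$, where $\rho$ is the distance in $\Sigma$ from $o$ of the footpoint. This requires controlling $(\rho\circ c)''$ using the Jacobi-field comparison available in negative curvature together with $|t|\le\max(t_0,\tau_0)$. I expect this to be the delicate point of the proof.
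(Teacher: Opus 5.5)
\textbf{The lateral face is a genuine gap, and it cannot be filled: your computation actually shows the statement is false in general.}

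Your cap argument is the paper's argument: Riccati comparison along $\gamma_o$ starting from $\mathcal{H}_0|_o=0$, then continuity in $p$. The corners are also fine. You correctly isolated the obstacle on $L$. Since $\nabla^2\rho(\partial_t,\partial_t)=0$ identically ($L$ is ruled by the normal geodesics), while $\nabla^2\rho(\partial_t,e)=-\langle S_p(t)e,\nabla\rho\rangle$, a nonzero mixed term makes $\nabla^2\rho|_{T_qL}$ indefinite. That rules out convexity outright, not just your proof of it.

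To see this, take $q\in L$ with $t(q)\in(-\tau_0,t_0)$, so the $t$-constraint is inactive. Choose $v=\partial_t+\eta e$ with $|\eta|$ small and of the appropriate sign, so that $\nabla^2\rho(v,v)=2\eta\,\nabla^2\rho(\partial_t,e)+\eta^2\nabla^2\rho(e,e)<0$. The geodesic $c$ with $c(0)=q$, $\dot c(0)=v$ then satisfies $\rho(c(s))<\varepsilon$ for small $s\neq0$. Moving its initial point slightly outward along $\nabla\rho$ gives a geodesic segment with both endpoints in $\Omega$ and midpoint outside $\overline{\Omega}$. So no absorption estimate can work, and neither can your maximum principle for $\rho\circ c$: $\rho\circ c$ really does have an interior maximum above $\varepsilon$.

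For $n\ge 3$ the mixed term is nonzero somewhere on $L$, for every small $\varepsilon$, as soon as $S_o(t)$ is not a multiple of the identity for some $t\in(-\tau_0,t_0)$. The reason is that as $p$ runs over $\partial B^\Sigma_\varepsilon(o)$, the normal $N$ attains every unit direction of $T\Sigma_t$ up to $O(\varepsilon)$, and $S_p(t)=S_o(t)+O(\varepsilon)$. Since $S_o(t)=-tR(\cdot,\nu_o)\nu_o+O(t^2)$, this is the generic situation. Two examples:
\begin{itemize}
\item In $\mathbb{C}\mathbb{H}^m$ with curvature in $[-4,-1]$, one has $S_o(t)=\mathrm{diag}(2\tanh 2t,\tanh t,\dots,\tanh t)$ in a parallel frame starting with $J\dot\gamma_o$.
\item For the negatively curved metric $dt^2+e^{2at}dx^2+e^{2bt}dy^2$ on $\mathbb{R}^3$ with $0<a\neq b$, along $t\mapsto(t,0,0)$ one gets $S_o(t)=\mathrm{diag}(a\tanh at,\,b\tanh bt)$. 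To leading order in $\varepsilon$, at angle $\phi$ on the $\varepsilon$-circle (with respect to $\partial_x,\partial_y$), the mixed term is proportional to $\sin\phi\cos\phi\,(a\tanh at-b\tanh bt)$.
\end{itemize}
So the lemma as stated is false in this generality. It holds when $\mathcal{H}_t$ is umbilic (e.g.\ $\mathbb{H}^n$, the hyperbolic case) or when $n=2$.

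The paper's proof has exactly this hole. It declares the shape operator of $Z_\varepsilon$ ``clearly'' positive. But $Z_\varepsilon$ contains the geodesics $t\mapsto\exp_p(t\nu_p)$, so that operator is never definite. It is semidefinite only where the radial direction is an eigenvector of $\mathcal{H}_t$. Repairing this requires changing the domain, not sharpening an estimate.
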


If $\Sigma \subseteq X$ were totally geodesic in every point, then $\Omega(\varepsilon;0,t_0)$ would be convex for all $\varepsilon >0$ and $t_0 > 0$ since $\sec(X)<0$.

The following technical remark will be important in the proof of \Cref{Main Theorem}.

\begin{rem}[Locally uniform constants]\label{rem: uniform constants}\normalfont
The constant $\varepsilon_0$ in \Cref{lem: convex domain} can be chosen to be locally uniform in $t_0 > 0$ and $\tau_0 > 0$, i.e., for all $0 < t_{-} \leq t_{+} < \infty $ and $0 < \tau_{-} \leq \tau_{+} < \infty $ there exists $\varepsilon_0=\varepsilon_0(t_{-},t_{+},\tau_{-},\tau_{+}) > 0$ such that the conclusion of \Cref{lem: convex domain} holds for all $\varepsilon \in (0,\varepsilon_0]$, and all $t_0 \in [t_{-},t_{+}]$ and $\tau_0 \in [\tau_{-},\tau_{+}]$. In fact, this will be the case for all constants throughout this article. Indeed, this will be clear from the proofs.
\end{rem}

\begin{proof}
The boundary of $\Omega(\varepsilon; -\tau_0,t_0)$ decomposes into the three parts
\[
	\partial \Omega(\varepsilon; -\tau_0,t_0) = \Sigma_{t_0}^{\varepsilon} \cup Z_\varepsilon \cup \Sigma_{-\tau_0}^{\varepsilon},
\]
where $\Sigma_{t_0}^{\varepsilon}$ resp.~$\Sigma_{-\tau_0}^{\varepsilon}$ are the top resp.~bottom of the boundary, i.e.,
\begin{align*}
	\Sigma_{t_0}^{\varepsilon} \coloneq  
	\left\{\exp_{p}(t_0 \nu_p) \in X \, | \, p \in B^{\Sigma}_\varepsilon(o)\right\} \quad \text{resp.} \quad
	\Sigma_{-\tau_0}^{\varepsilon} \coloneq 
	\left\{\exp_{p}(-\tau_0 \nu_p) \in X \, | \, p \in B^{\Sigma}_\varepsilon(o)\right\},
\end{align*}
and $Z_\varepsilon$ is the cylindrical part of the boundary given by 
\[
	Z_\varepsilon \coloneq \left\{\exp_{p}(t \nu_p) \in X \, | \, p \in \partial B^{\Sigma}_\varepsilon(o) \text{ and } t \in (-\tau_0,t_0) \right\}.
\]
Since $Z_\varepsilon$ intersects $\Sigma_{t_0}^{\varepsilon}$ and $\Sigma_{-\tau_0}^{\varepsilon}$ orthogonally, it suffices to show that for each of these parts of the boundary, their shape operator with respect to the outward pointing unit normal is positive.

It is clear that for $\varepsilon > 0$ small enough (depending on $t_0$ and $\tau_0$), the shape operator of $Z_\varepsilon$ with respect to the outward pointing unit normal is positive. 

It remains to check that the same is true for $\Sigma_{t_0}^\varepsilon$ (the argument for  $\Sigma_{-\tau_0}^\varepsilon$ is analogous and will not be carried out). We denote, for all $t \in \bbR$, by 
\[
	\Sigma_t \coloneq \big\{\exp_p(t \nu_p) \in X \, | \, p \in B_\varepsilon^\Sigma(o)\big\}
\]
the codimension one submanifold with signed distance $t$ to $\Sigma$. Moreover, for all $t \neq 0$, denote by $\mathcal{H}_t$ the shape operator of $\Sigma_t$ with respect to the outward pointing unit normal $\nabla d(\cdot,\Sigma)$. Consider the geodesic $\gamma_o(t) \coloneq \exp_o(t \nu_o)$, where $o \in \Sigma$ is the fixed basepoint. Then $\lim_{t \to 0}\mathcal{H}_t|_{\gamma_o(t)}=0$ thanks to (\ref{eq: Sigma 2nd fundamental form}). It follows from Riccati comparison that $\mathcal{H}_t|_{\gamma_o(t)}$ is positive definite for all $t > 0$. In particular, $\mathcal{H}_{t_0}|_{\gamma_o(t_0)}$ is positive definite. Therefore, by continuity, $\mathcal{H}_{t_0}$ is positive definite on $\Sigma_{t_0}^\varepsilon$ for all $\varepsilon > 0$ small enough (depending on $t_0$). This completes the proof.
\end{proof}

To define the convex potential $P$ on $\Omega(\varepsilon;-\tau_0,t_0)$, we fix another $t_P \in (t_0/4,t_0/2)$.

\begin{definition}[Potential]\label{def: Potential}
For all $t_0 > 0$, $\tau_0 \geq 0$, $\varepsilon > 0$ and $t_P \in (t_0/4,t_0/2)$ define
\[
	P \, \colon \Omega(\varepsilon;-\tau_0,t_0) \to \bbR, \, \exp_p(t \nu_p) \mapsto
	\begin{cases}
		0 & \text{ if } -\tau_0 \leq t \leq t_P \\
		(t-t_P)^3 & \text{ if } t_P \leq t \leq t_0
	\end{cases}.
\]
\end{definition}

The precise formula for $P$ is \emph{not} important. The crucial property is that $P( \exp_p(t\nu_p))$ only depends on $t$. To show that $P$ is convex on $\Omega$, we will only need that $t \mapsto P(t)$ is $C^2$, $P=0$ for $t \leq t_P$, and  $P^\prime \geq 0$ and $P^{\prime \prime} \geq 0$. After that, we will only need that $t \mapsto P(t)$ is $C^2$ and $P^\prime(t_0) > 0$ (see \Cref{cor: asymptotic integral}). So there is a lot of freedom in the choice of $P$.

\begin{lem}\label{lem: convex potential}
For all $t_0 > 0$  there exists $\varepsilon_0=\varepsilon_0(t_0) > 0$ such that the potential $P$ is convex on $\Omega(\varepsilon; -\tau_0,t_0)$ for all $\varepsilon \in (0,\varepsilon_0]$.
\end{lem}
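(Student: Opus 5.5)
We write $P=f\circ s$, where $s$ is the signed distance function to $\Sigma$, so $s(\exp_p(t\nu_p))=t$, and $f(t)=0$ for $t\le t_P$, $f(t)=(t-t_P)^3$ for $t\ge t_P$. Then $f\in C^2$ with $f'\ge 0$ and $f''\ge 0$. On $\Omega(\varepsilon;-\tau_0,t_0)$, for $\varepsilon$ small, the normal exponential map is a diffeomorphism onto its image. This holds because $X$ is Hadamard: there are no focal points along the normal geodesics close to $o$, by Riccati comparison as in \Cref{lem: convex domain}. Hence $s$ is smooth there and $|\nabla s|=1$. For any geodesic $c$ in $\Omega$ we have
\[
(P\circ c)''=f''(s\circ c)\,\langle\nabla s,\dot c\rangle^2+f'(s\circ c)\,\nabla^2 s(\dot c,\dot c).
\]
The first term is always nonnegative. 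The plan is therefore to show that the second term is also nonnegative, i.e.\ that $\nabla^2 s\ge 0$ wherever $f'(s)>0$. That region is $\{s> t_P\}$.

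The key step concerns the Hessian of $s$. We have $\nabla^2 s(\nabla s,\cdot)=0$. Restricted to $T\Sigma_t$, $\nabla^2 s$ equals the shape operator $\mathcal{H}_t$ of $\Sigma_t$ with respect to $\nabla s$. It therefore suffices to show that $\mathcal{H}_t$ is positive semidefinite at all points $\exp_p(t\nu_p)$ with $p\in B^\Sigma_\varepsilon(o)$ and $t\in[t_P,t_0]$. We get this exactly as in the proof of \Cref{lem: convex domain}. Along $\gamma_o$, $\mathcal{H}_t\to 0$ as $t\to 0$ by (\ref{eq: Sigma 2nd fundamental form}). The Riccati equation $\mathcal{H}'+\mathcal{H}^2+R_{\nabla s}=0$ together with $\sec<0$ then gives that $\mathcal{H}_t|_{\gamma_o(t)}$ is positive definite for all $t>0$. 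By compactness of $[t_0/4,t_0]$, its smallest eigenvalue is bounded below by some $c(t_0)>0$ there.

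By continuity of $(p,t)\mapsto\mathcal{H}_t|_{\exp_p(t\nu_p)}$, there is $\varepsilon_0(t_0)>0$ such that $\mathcal{H}_t>0$ for all $p\in B^\Sigma_{\varepsilon_0}(o)$ and $t\in[t_0/4,t_0]$. Since $t_P>t_0/4$, this covers the region where $f'>0$, so $(P\circ c)''\ge 0$ for every geodesic $c$ in $\Omega$. The domain $\Omega$ is convex by \Cref{lem: convex domain}, though convexity of $P$ along geodesic segments does not even need it. Note that $\varepsilon_0$ depends only on $t_0$ and not on $\tau_0$ or $t_P$: the region $t<t_P$ plays no role because $f'=f''=0$ there.

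The main obstacle is this uniform positivity of $\mathcal{H}_t$ off the central geodesic, i.e.\ making the continuity argument uniform on the compact set $\overline{B^\Sigma_{\varepsilon}(o)}\times[t_0/4,t_0]$. This works because we stay a fixed distance $t_0/4$ away from $\Sigma$, where $\mathcal{H}$ degenerates at $o$ and may even be negative at nearby points of $\Sigma$. Smoothness of the data near $\gamma_o([0,t_0])$ then makes the argument routine.
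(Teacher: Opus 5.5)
Your proof is correct and is essentially the paper's argument. You split the Hessian of $P=f\circ s$ into the normal part $f''\ge 0$ and the tangential part $f'\,\mathcal{H}_t$. You then get $\mathcal{H}_t>0$ for $t\in[t_0/4,t_0]$ from Riccati comparison along $\gamma_o$ plus continuity, exactly as in the proof of \Cref{lem: convex domain}.

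The chain rule along geodesics is only a cosmetic variant of the paper's frame computation. One small correction: the fact that $\Psi$ is a diffeomorphism onto $\Omega$ (so $s$ is smooth with $|\nabla s|=1$) is the standing assumption from \Cref{subsec: operators and function spaces}, for $\varepsilon$ small depending on $t_0,\tau_0$. It does not follow from $X$ being Hadamard alone.
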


\begin{proof}
For ease of notation we abbreviate $\Omega \coloneq \Omega(\varepsilon; -\tau_0,t_0)$.
We use the same notation $\Sigma_t$ and $\mathcal{H}_t$ as in the proof of \Cref{lem: convex domain}. Moreover, we denote by $\partial_t$ the unit normal vector field on $\Sigma_t$, i.e.,
\[
	\partial_t|_{\exp_p(t\nu_p)} \coloneq \left.\frac{d}{d \tau}\exp_p(\tau \nu_p)\right|_{\tau = t}.
\]
Using that $P$ is $C^2$ and only depends on $t$, an easy computation shows that, for all $v,w \in T \Sigma_t$, we have
\[
	(\nabla^2 P) (\partial_t,\partial_t)=\partial^2_{tt}(P),
	\quad 
	(\nabla^2 P)(\partial_t,v)=0,
	\quad \text{and} \quad
	(\nabla^2 P)(v,w)=\langle \mathcal{H}_t(v),w \rangle \partial_t(P).
\]
By construction we have $P^{\prime \prime}(t) \geq 0$ and $P^\prime(t) \geq 0$ for all $t \in [-\tau_0,t_0]$, and $P(t)=0$ for all $t \in [-\tau_0,t_P]$. Moreover, it follows from the proof of \Cref{lem: convex domain} that $\mathcal{H}_t$ is positive definite on $\Sigma_t \cap \Omega$ for all $t \in [t_0/4,t_0]$ and all $\varepsilon > 0$ small enough (depending on $t_0$). Therefore, $\nabla^2 P \geq 0$ in $\Omega$ for all $\varepsilon > 0$ small enough. This completes the proof.
\end{proof}

Later, we will first fix $t_0 > 0$, then choose an appropriate $\tau_0 > 0$ (depending on $t_0$), and only consider $\varepsilon > 0$ small enough such that \Cref{lem: convex domain} and \Cref{lem: convex potential} can be applied.

\subsection{Differential operators and their function spaces}\label{subsec: operators and function spaces}
We continue to use the same notation as in \Cref{subsec: domain and potential}, i.e., $H \subseteq T_oX$ is a fixed linear subspace of codimension one, $\Sigma \coloneq \exp_o(H)$, and $\nu$ is a unit normal vector field along $\Sigma$.

Denote by $\psi_{\Sigma} \colon H \to \Sigma$ the restriction of $\exp_o$ to $H$, and fix an identification $\bbR^{n-1} \cong H$. We define
\begin{equation*}\label{def coordinates}
	\Psi \colon  \bbR^{n-1} \times \bbR, \, (s,t) \longmapsto \exp_{\psi_{\Sigma}(s)}\big(t \nu_{\psi_\Sigma(s)} \big).
\end{equation*}
Then, $\Psi$ is a local diffeomorphism around $(0,0)$. More precisely, for all $t_0, \tau_0 > 0$, there exists $\epsilon_0 > 0$ such that the restriction of $\Psi$ to $ B_{\epsilon_0}(0) \times (-\tau_0,t_0)$ is a diffeomorphism onto its image. So, for all $\varepsilon > 0 $ small enough, $\Psi$ is a diffeomorphism from $ B_\varepsilon(0) \times (-\tau_0,t_0)$ to $\Omega(\varepsilon; -\tau_0,t_0)$. It will be convenient to use $\Psi$ as a coordinate chart on a slightly larger domain, e.g., on $ B_{\epsilon_0}(0) \times (-\tau_0-1,t_0+1) \cong \Omega(\epsilon_0;-\tau_0-1,t_0+1)$ for some $\epsilon_0 > 2\varepsilon$.

Note that, in the coordinate chart $\Psi$, the metric $g$ of $X$ is of the form
\begin{equation}\label{eq: def metric g}
	g=dt^2 + g_{ij}(s,t) ds^i ds^j + g_{ti}(s,t) dt ds^i,
\end{equation}
with $g_{ti}(0,t)=0$ for all $t \in (-\tau_0,t_0)$ as a consequence of (\ref{eq: Sigma 2nd fundamental form}). 

We define $g_{\circ}$ as the Riemannian metric on $X$ obtained from $g$ by evaluating $s=0$ everywhere, i.e.,
\begin{equation}\label{eq: def metric g_0}
	g_{\circ} \coloneq dt^2 + g_{ij}(0,t) ds^i ds^j.
\end{equation}

We denote by $\Delta$ the Laplacian of $g$, and by $\Delta_\circ$ the Laplacian of $g_\circ$ on $X$. In local coordinates they are given by 
\begin{equation}\label{eq: def Laplace}
	\Delta = \frac{1}{\sqrt{|g|}}\bigg(\partial_t\Big(\sqrt{|g|} \partial_t \Big)
	+\partial_{s_i}\Big(\sqrt{|g|} g^{ij}\partial_{s_j}\Big)
	+\partial_{t}\Big(\sqrt{|g|} g^{ti}\partial_{s_i}\Big)
	+\partial_{s_i}\Big(\sqrt{|g|} g^{ti}\partial_{t}\Big) \bigg),
\end{equation}
and similarly, as $(g_{\circ})_{ij}$ only depends on $t$, 
\begin{equation}\label{eq: def Laplace_0}
	\Delta_{\circ} = \frac{1}{\sqrt{|g_{\circ}|}}\partial_t\Big(\sqrt{|g_{\circ}|} \partial_t \Big)
	+\partial_{s_i}\Big(g_{\circ}^{ij}\partial_{s_j}\Big).
\end{equation}
For ease of notation, we will abbreviate
\begin{equation}\label{eq: def jac and jac0}
	\jac(s,t) \coloneq \sqrt{|g|}(s,t)
	\quad \text{and} \quad 	
	\jaco(t) \coloneq \sqrt{|g_{\circ}|}(t).
\end{equation}

Most of the time, we will not directly work in the coordinates $(t,s)$ given by $\Psi$. Rather, we will use the change of variables 
\begin{equation}\label{eq: change of variables}
	x \coloneq \delta^{-1/3}(t_0-t)
	\quad \text{and} \quad
	y \coloneq \frac{s}{\varepsilon},
\end{equation}
where $\delta=\delta(\varepsilon) > 0$ is an appropriately chosen constant satisfying $\delta \to 0$ as $\varepsilon \to 0$ (the precise definition of $\delta$ is not important at the moment).

Similarly, instead of directly working with $\Delta$ and $\Delta_\circ$, we will investigate the rescaled operators
\begin{equation}\label{eq: def tilde Laplacians}
	\tildeDelta \coloneq \delta^{2/3} \Delta
	\quad \text{and} \quad
	\tildeDeltao \coloneq\delta^{2/3}\Delta_{\circ}.
\end{equation}
We consider $\tildeDelta$ and $\tildeDelta_0$ as elliptic partial differential operators on the domain
\begin{equation}\label{eq: def tilde(Omega)}
	\tildeOmega \coloneq \big( 0,\delta^{-1/3}(t_0+\tau_0) \big) \times B_1(0) \subseteq \bbR \times \bbR^{n-1}
\end{equation}
which corresponds to $\Omega$ under the coordinate chart $\Psi$ and the change of variables (\ref{eq: change of variables}).

Then $\tildeDelta$ resp.~$\tildeDeltao$ is an unbounded self-adjoint operator on the weighted $L^2$-space
\[
	\tilde{L}^2\big(\tildeOmega\big) \coloneq \left( L^2\big(\tildeOmega\big), \langle \cdot, \cdot \rangle_{\tilde{L}^2(\tildeOmega)} \right)
	\quad \text{resp.} \quad
	\tilde{L}_\circ^2\big(\tildeOmega\big) \coloneq \left( L^2\big(\tildeOmega\big), \langle \cdot, \cdot \rangle_{\tilde{L}_\circ^2(\tildeOmega)} \right),
\]
where $\langle \cdot, \cdot \rangle_{\tilde{L}^2(\tildeOmega)}$ resp.~$\langle \cdot, \cdot \rangle_{\tilde{L}_\circ^2(\tildeOmega)}$ is the weighted $L^2$-inner product given by
\begin{equation}\label{eq: def weighted L^2 norm xy}
	\langle w_1, w_2 \rangle_{\tilde{L}^2(\tildeOmega)} \coloneq 
	\int_{\tildeOmega}\frac{\jac(\varepsilon y,t_0-\delta^{1/3}x)}{\jaco(t_0)}w_1(x,y)w_2(x,y) \, dxdy
\end{equation}
resp.
\begin{equation}\label{eq: def weighted L_0^2 norm xy}
	\langle w_1, w_2 \rangle_{\tilde{L}_\circ^2(\tildeOmega)} \coloneq 
	\int_{\tildeOmega}\frac{\jaco(t_0-\delta^{1/3}x)}{\jaco(t_0)}w_1(x,y)w_2(x,y) \, dxdy.
\end{equation}
It follows from the following remark that on the domain $H^2(\tildeOmega) \cap H_0^1(\tildeOmega) \subseteq L^2(\tildeOmega)$ the operators $\tildeDelta$ and $\tildeDeltao$ are unbounded and self-adjoint.

\begin{rem}[$H^2$-estimates up to the boundary]\label{rem: H^2 up to boundary}\normalfont
Even though the boundary of $\tildeOmega$ is not smooth, we have $H^2$-estimates up to the boundary of $\tildeOmega$ for any elliptic operator in divergene form with Lipschitz coefficients.
\end{rem}

Indeed, using a standard odd-reflection construction along the bottom and top faces $\{0\} \times \bar{B}_1(0)$ and $\{\delta^{-1/3}(t_0+\tau_0)\} \times \bar{B}_1(0)$ of $\tildeOmega$, any solution $u \in H_0^1(\tildeOmega)$ of $\mathcal{L}u=f$ extends to a solution $\hat{u} \in H_0^1(\widehat{\Omega})$ of $\widehat{\mathcal{L}}\hat{u}=\hat{f}$ in the domain $\widehat{\Omega} = (-\delta^{-1/3}(t_0+\tau_0),2\delta^{-1/3}(t_0+\tau_0)) \times B_1(0)$. Thus, $H^2$-estimates up to the boundary of $\tildeOmega$ follow from interior $H^2$-estimates in $\widehat{\Omega}$.

\section{Preliminaries}\label{sec: Preliminaries}

\subsection{Perturbation estimates}\label{subsec: perturbation estimates}

In this subsection we gather elementary perturbation estimates that will be used in the remainder of this article.

The first result allows us to guess eigenvalues and eigenvectors. In its formulation, $\mathcal{V}$ is a real separable Hilbert space, $\mathcal{L}$ is an unbounded self-adjoint operator with compact resolvent on $\mathcal{V}$, and $(v_i)_{i \in \bbN} \subseteq \mathcal{V}$ is a complete orthonormal basis of eigenvectors of $\mathcal{L}$ with eigenvalues $\lambda_1 \leq \lambda_2 \leq \dots$.

\begin{lem}[Guessing Eigenobjects]\label{lem: Guessing eigenvectors}
Assume $v^{\rm Guess} \in {\rm Dom}(\mathcal{L})$ and $\lambda^{\rm Guess} \in \bbR$ and are such that
\[
	\left|\left|(\mathcal{L}-\lambda^{\rm Guess})v^{\rm Guess} \right| \right| \leq \varepsilon_{\rm Guess}
	\quad \text{and} \quad
	\left|\left|v^{\rm Guess} \right| \right| = 1
\]
for some $\varepsilon_{\rm Guess} \geq 0$. Then, there exists $j \in \bbN$ such that
\begin{equation}\label{eq: guessing eigenvalues}
	\left|\lambda_j-\lambda^{\rm Guess} \right| \leq \varepsilon_{\rm Guess}.
\end{equation}
Moreover, if the $j$-th eigenvalue $\lambda_j$ of $\mathcal{L}$ given by (\ref{eq: guessing eigenvalues}) is simple, then, after potentially changing $v_j$ up to sign, we have
\begin{equation}\label{eq: guessing eigenvectors}
	\left|\left|v_j-v^{\rm Guess} \right| \right| \leq \frac{C\varepsilon_{\rm Guess}}{\Gamma_j},
\end{equation}
where $\Gamma_j \coloneq \min\{\lambda_{j+1}-\lambda_j,\lambda_j-\lambda_{j-1}\} > 0$  and $C > 0$ is a universal constant.
\end{lem}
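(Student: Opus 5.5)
We expand $v^{\rm Guess}$ in the orthonormal eigenbasis and read off both estimates from Parseval's identity. Write $v^{\rm Guess}=\sum_i c_i v_i$ with $c_i=\langle v^{\rm Guess},v_i\rangle$. Since $v^{\rm Guess}\in{\rm Dom}(\mathcal{L})$ and $\mathcal{L}$ is self-adjoint with this eigenbasis, we have $(\mathcal{L}-\lambda^{\rm Guess})v^{\rm Guess}=\sum_i (\lambda_i-\lambda^{\rm Guess})c_i v_i$. Parseval then gives
\[
	\sum_i (\lambda_i-\lambda^{\rm Guess})^2 c_i^2 \leq \varepsilon_{\rm Guess}^2
	\quad \text{and} \quad
	\sum_i c_i^2=1 .
\]

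For (\ref{eq: guessing eigenvalues}), set $d:=\inf_i|\lambda_i-\lambda^{\rm Guess}|$. Because the resolvent is compact, the eigenvalues are discrete and tend to $+\infty$, so this infimum is attained at some index $j$. The first identity gives $d^2=d^2\sum_i c_i^2\leq \varepsilon_{\rm Guess}^2$, so $|\lambda_j-\lambda^{\rm Guess}|\leq \varepsilon_{\rm Guess}$.

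For (\ref{eq: guessing eigenvectors}), fix this $j$ and assume $\lambda_j$ is simple. If $\varepsilon_{\rm Guess}\geq \Gamma_j/2$, the bound is trivial with $C=4$: after a sign change of $v_j$ we have $c_j\geq 0$, so $\|v_j-v^{\rm Guess}\|\leq 2\leq 4\varepsilon_{\rm Guess}/\Gamma_j$. So assume $\varepsilon_{\rm Guess}<\Gamma_j/2$. For $i\neq j$, the ordering of the eigenvalues and simplicity of $\lambda_j$ give $|\lambda_i-\lambda_j|\geq \Gamma_j$. Hence
\[
	|\lambda_i-\lambda^{\rm Guess}|\geq \Gamma_j-\varepsilon_{\rm Guess}\geq \Gamma_j/2 .
\]
Consequently
\[
	\sum_{i\neq j}c_i^2\leq \frac{4\varepsilon_{\rm Guess}^2}{\Gamma_j^2}.
\]
With the sign of $v_j$ chosen so that $c_j\geq0$, we compute
\[
	\|v_j-v^{\rm Guess}\|^2=(1-c_j)^2+\sum_{i\neq j}c_i^2 .
\]
Since $0\leq c_j\leq 1$ and $c_j^2=1-\sum_{i\neq j}c_i^2$, we get $1-c_j\leq 1-c_j^2=\sum_{i\neq j}c_i^2$. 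Therefore
\[
	\|v_j-v^{\rm Guess}\|^2\leq 2\sum_{i\neq j}c_i^2,
\]
and this yields (\ref{eq: guessing eigenvectors}) with $C=2\sqrt2$. Combining both cases, $C=4$ works.

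The main point needing care is the edge case $j=1$, where $\lambda_{j-1}$ does not exist. There we interpret $\Gamma_1=\lambda_2-\lambda_1$, and the same argument applies verbatim.
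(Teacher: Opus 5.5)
Your argument is correct and is essentially the paper's proof written in coordinates: both use the spectral decomposition to get the eigenvalue bound, both dispose of the case $\varepsilon_{\rm Guess}\geq \Gamma_j/2$ trivially, and otherwise both use $|\lambda_i-\lambda^{\rm Guess}|\geq \Gamma_j/2$ for $i\neq j$ to control the component of $v^{\rm Guess}$ orthogonal to $v_j$. The paper works with the abstract splitting $v^{\rm Guess}=\beta v_j+w$ and the bound $\|(\mathcal{L}-\lambda^{\rm Guess})w\|\leq 2\varepsilon_{\rm Guess}$, whereas your Parseval computation gives the same estimate directly and even saves that factor $2$.
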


This is a straight-forward consequence of the spectral theorem.

\begin{proof}
If $R > 0$ is such that $|\lambda_j - \lambda^{\rm Guess}| \geq R$ for all $j \in \bbN$, then it follows from the spectral decomposition that $||(\mathcal{L}-\lambda^{\rm Guess})v|| \geq R||v||$ for all $v \in {\rm Dom}(\mathcal{L})$. From this, (\ref{eq: guessing eigenvalues}) follows immediately.

It remains to prove (\ref{eq: guessing eigenvectors}). If $\varepsilon_{\rm Guess} \geq \frac{1}{2}\Gamma_j$, then (\ref{eq: guessing eigenvectors}) trivially holds due to the triangle inequality. So we may from now on assume $\varepsilon_{\rm Guess} \leq \frac{1}{2}\Gamma_j$.

Decompose $v^{\rm Guess}=\beta v_j + w \in {\rm span}\{v_j\}+ {\rm span}\{v_j\}^\perp$. After potentially changing $v_j$ up to sign, we may without loss of generality assume $\beta \geq 0$. Then $||v^{\rm Guess}-v_j|| \leq \sqrt{2}||w||$, and thus it suffices to bound $||w||$. 

Observe that  $|\lambda_{j^\prime}-\lambda^{\rm Guess}| \geq \frac{1}{2}\Gamma_j$ for all $j^\prime \in \bbN \setminus \{j\}$ as a consequence of (\ref{eq: guessing eigenvalues}) and $\varepsilon_{\rm Guess} \leq \frac{1}{2}\Gamma_j$. Therefore, it follows from the spectral decomposition and $w \in {\rm span}\{v_j\}^\perp$ that
\(
	\frac{1}{2}\Gamma_j ||w|| \leq || (\mathcal{L}-\lambda^{\rm Guess})w ||.
\)
Finally, we can bound $|| (\mathcal{L}-\lambda^{\rm Guess})w || \leq 2\varepsilon_{\rm Guess}$ due to (\ref{eq: guessing eigenvalues}) and  
\begin{align*}
	(\mathcal{L}-\lambda^{\rm Guess})w =& (\mathcal{L}-\lambda_j)w +(\lambda_j-\lambda^{\rm Guess})w = (\mathcal{L}-\lambda_j)v^{\rm Guess}+(\lambda_j-\lambda^{\rm Guess})w \\
	=& (\mathcal{L}-\lambda^{\rm Guess})v^{\rm Guess}+(\lambda^{\rm Guess}-\lambda_j)(v^{\rm Guess}-w).
\end{align*}
This establishes (\ref{eq: guessing eigenvectors}), and thus completes the proof.
\end{proof}

The second perturbation estimate compares the eigenvalues and eigenvectors of two unbounded operators that are self-adjoint with respect to two different inner products. Namely,
let $\Vo=(V,\langle \cdot , \cdot \rangle_\circ)$ and $\mathcal{V}=(V, \langle \cdot, \cdot \rangle)$ be two real separable Hilbert space structures on a common underlying vector space $V$. We assume that $\langle \cdot , \cdot \rangle_\circ$ and $\langle \cdot , \cdot \rangle$ are almost equivalent in the following sense:~there exists $\varepsilon_{\rm comp} \in [0, 1/2]$ such that for all $w_1,w_2 \in V$ we have
\begin{equation}\label{eq: inner products almost equivalent}
	\big|\langle w_1 , w_2 \rangle_\circ - \langle w_1, w_2 \rangle \big|
	\leq \varepsilon_{\rm comp}||w_1||_\circ ||w_2||_\circ
\end{equation}
and the analogous bound with $||\cdot||$ instead of $||\cdot||_\circ$ in the upper bound.

Moreover, let $\Lo$ resp.~$\mathcal{L}$ be an non-negative unbounded self-adjoint operator with compact resolvent on $\Vo$ resp.~$\mathcal{V}$, and $(\vo_i)_{i \in \bbN} \subseteq \Vo$ resp.~$(v_i)_{i \in \bbN} \subseteq \mathcal{V}$ a complete orthonormal basis of eigenvectors of $\Lo$ resp.~$\mathcal{L}$ with eigenvalues $\lambdacirc_1 \leq \lambdacirc_2 \leq \dots$ resp.~$\lambda_1 \leq \lambda_2 \leq \dots$. We also assume $\vo_i \in {\rm Dom}(\mathcal{L})$ and $v_i \in  {\rm Dom}(\Lo)$ for all $i \in \bbN$. Finally, for all $k \in \bbN$, we abbreviate
\[
	S_k^{\circ} \coloneq {\rm span}\big\{\vo_1, \dots, \vo_k \big\}
	\quad \text{and} \quad
	S_k \coloneq {\rm span}\big\{v_1, \dots, v_k \big\}.
\] 
With this notation at hand, we can now formulate the second perturbation estimate. This is similar to \cite[Lemma 3.1]{CJN25}, but adapted to our purposes.

\begin{lem}\label{lem: perturbation}
Let $\Vo,\mathcal{V},\Lo,\mathcal{L}$ and $\varepsilon_{\rm comp}$ be as above. Then, for all $k \in \bbN$, we have
\begin{equation}\label{eq: eigenvalue perturbation}
	-3\varepsilon_{\rm comp}\lambda_k - \max_{v \in S_k}\frac{|\langle (\Lo-\mathcal{L})v,v \rangle_\circ |}{||v||_\circ^2}
	\leq
	\lambda_k-\lambdacirc_k 
	\leq 
	3\varepsilon_{\rm comp}\lambdacirc_k + \max_{v \in S_k^\circ}\frac{|\langle (\Lo-\mathcal{L})v,v \rangle |}{||v||^2}.
\end{equation}
Moreover, if the $k$-th eigenvalue $\lambdacirc_k$ of $\Lo$ is simple, then, after potentially changing $\vo_k$ up to sign, we have
\begin{equation}\label{eq: eigenvector perturbation}
	||v_k-\vo_k||^2_\circ \leq \frac{C}{\Gamma_k^\circ} \left(\varepsilon_{\rm comp}\lambdacirc_k+\max_{v \in S_k^\circ}\frac{|\langle (\Lo-\mathcal{L})v,v \rangle |}{||v||^2}+\max_{v \in S_k}\frac{|\langle (\Lo-\mathcal{L})v,v \rangle_\circ |}{||v||_\circ^2} \right),
\end{equation}
where $\Gamma_k^\circ \coloneq \min\big\{\lambdacirc_{k+1}-\lambdacirc_k,\lambdacirc_k-\lambdacirc_{k-1} \big\} > 0$ and $C$ is a universal constant. 
\end{lem}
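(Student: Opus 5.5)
The plan is to use the min--max characterization of eigenvalues in both Hilbert space structures, with the spans $S_k^\circ$ and $S_k$ as test spaces.

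\emph{Upper bound in (\ref{eq: eigenvalue perturbation}).} For $\lambda_k$ we have $\lambda_k \le \max_{v \in S_k^\circ} \langle \mathcal{L}v,v\rangle/\|v\|^2$, since $S_k^\circ$ is $k$-dimensional; note that $S_k^\circ \subseteq {\rm Dom}(\mathcal{L})$ by assumption. For $v \in S_k^\circ$ we split
\[
\langle \mathcal{L}v,v\rangle = \langle \Lo v,v\rangle + \langle (\mathcal{L}-\Lo)v,v\rangle .
\]
The second term is bounded by $\max_{S_k^\circ}|\langle(\Lo-\mathcal{L})v,v\rangle|/\|v\|^2$ times $\|v\|^2$. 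For the first term we use (\ref{eq: inner products almost equivalent}) with $w_1=\Lo v$ and $w_2=v$. Since $v\in S_k^\circ$, the vector $\Lo v$ lies in $S_k^\circ$ as well, and $\|\Lo v\|_\circ \le \lambdacirc_k\|v\|_\circ$ because $\Lo\ge0$. Hence
\[
\langle \Lo v,v\rangle \le \langle \Lo v,v\rangle_\circ + \varepsilon_{\rm comp}\lambdacirc_k\|v\|_\circ^2 \le (1+\varepsilon_{\rm comp})\lambdacirc_k\|v\|_\circ^2 .
\]
Taking $w_1=w_2=v$ in (\ref{eq: inner products almost equivalent}) gives $\|v\|_\circ^2 \le \|v\|^2/(1-\varepsilon_{\rm comp})$. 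With $\varepsilon_{\rm comp}\le 1/2$, the factor $(1+\varepsilon)/(1-\varepsilon)\le 1+3\varepsilon$, which yields the upper bound. The lower bound follows by the symmetric argument with the roles of $(\Lo,\langle\cdot,\cdot\rangle_\circ)$ and $(\mathcal{L},\langle\cdot,\cdot\rangle)$ swapped, using $S_k$ and the second version of (\ref{eq: inner products almost equivalent}).

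\emph{Eigenvector estimate (\ref{eq: eigenvector perturbation}).} Write $v_k = \beta\vo_k + a + b$ in $\Vo$, where $a\in S_{k-1}^\circ$ and $b \perp_\circ S_k^\circ$, and choose the sign of $\vo_k$ so that $\beta\ge0$. First compute the Rayleigh quotient of $v_k$ for $\Lo$:
\[
\langle \Lo v_k,v_k\rangle_\circ = \beta^2\lambdacirc_k + \langle \Lo a,a\rangle_\circ + \langle \Lo b,b\rangle_\circ \ge \beta^2\lambdacirc_k + \langle \Lo a,a\rangle_\circ + \lambdacirc_{k+1}\|b\|_\circ^2 .
\]
On the other hand, the same comparison as in the first part, applied in the reverse direction, gives
\[
\langle \Lo v_k,v_k\rangle_\circ \le \lambda_k\|v_k\|_\circ^2 + E, \qquad \|v_k\|_\circ^2 = 1+O(\varepsilon_{\rm comp}),
\]
where $E$ is the error term. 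Combining these with the upper bound $\lambda_k\le\lambdacirc_k+\dots$ controls $\|b\|_\circ^2(\lambdacirc_{k+1}-\lambdacirc_k)$, up to the fact that $a$ may push the quotient down.

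\emph{Controlling the $a$-component.} This is the main obstacle. The Rayleigh quotient alone does not see the lower modes, so I would instead use orthogonality and a max--min argument. Since $v_k$ is $\langle\cdot,\cdot\rangle$-orthogonal to $S_{k-1}$, it is almost $\circ$-orthogonal to $S_{k-1}$. Consequently, $\mathcal{L}$ restricted to $S_{k-1}^\perp$ has bottom $\lambda_k$.

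I would argue by induction on $k$, or more simply as follows. Apply the lower-bound min--max to the $k$-dimensional space spanned by $S_{k-1}^\circ$ and $v_k$. The $\mathcal{L}$-Rayleigh quotient on this space is at most about $\max(\lambdacirc_{k-1},\lambda_k)$. However, the $(k-1)$-st eigenvalue of $\mathcal{L}$ is close to $\lambdacirc_{k-1}$ only if that space is degenerate, and the degeneracy is measured by $\|a\|$.

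More concretely, consider $w = v_k - \beta\vo_k - b = a \in S_{k-1}^\circ$. Test $\mathcal{L}$ on ${\rm span}(S_{k-1}^\circ\cup\{\vo_k-\theta a\})$, or use the Courant–Fischer bound $\lambda_k \le$ the maximum over ${\rm span}(S_{k-1}^\circ, v_k)$. I expect that writing the $2\times2$ Rayleigh matrix on ${\rm span}(a, \beta\vo_k+b)$ shows that $\|a\|^2(\lambdacirc_k-\lambdacirc_{k-1})$ is bounded by the same error terms.

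Finally, combine $\|v_k-\vo_k\|_\circ^2 \le (1-\beta)^2+\|a\|^2+\|b\|^2$. The normalization $\|v_k\|_\circ^2 = 1+O(\varepsilon_{\rm comp})$ gives $1-\beta \lesssim \|a\|^2+\|b\|^2+\varepsilon_{\rm comp}$. Dividing by $\Gamma^\circ_k$ and noting that $\varepsilon_{\rm comp}\lesssim \varepsilon_{\rm comp}\lambdacirc_k/\Gamma_k^\circ$, since $\Gamma_k^\circ\le\lambdacirc_{k+1}$ and otherwise the claim is trivial, yields (\ref{eq: eigenvector perturbation}).
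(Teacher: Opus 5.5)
Your eigenvalue bound is the paper's min--max argument and is fine, up to one slip. The inequality $(1+\varepsilon)/(1-\varepsilon)\le 1+3\varepsilon$ fails for $\varepsilon\in(1/3,1/2]$. Use instead the $\|\cdot\|$-version of (\ref{eq: inner products almost equivalent}), $\|v\|_\circ^2\le(1+\varepsilon_{\rm comp})\|v\|^2$, which gives the factor $(1+\varepsilon_{\rm comp})^2\le 1+3\varepsilon_{\rm comp}$.

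The genuine gap is the component $a\in S_{k-1}^\circ$ of $v_k$, which you only ``expect'' to control. Your diagnosis is correct, and sharper than the paper's. The paper writes $v_k=\beta\vo_k+w^\circ$ and uses $\Gamma_k^\circ\|w^\circ\|_\circ^2\le\langle(\Lo-\lambdacirc_k)w^\circ,w^\circ\rangle_\circ$ on all of $\mathrm{span}\{\vo_k\}^\perp$. This is false for $k\ge 2$ (take $w^\circ=\vo_{k-1}$), so that argument only covers $k=1$.

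However, the tools you propose cannot fill the hole:
\begin{itemize}
\item Since $\Lo a\in S_{k-1}^\circ$, the form $\langle \Lo\cdot,\cdot\rangle_\circ$ is diagonal on $\mathrm{span}(a,\beta\vo_k+b)$. So the $2\times 2$ matrix only says that the quotient of $v_k$ is a convex combination of a number $\le\lambdacirc_{k-1}$ and a number $\ge\lambdacirc_k$. A large $a$ balanced by a small $b$ along a very high mode is consistent with everything you use.
\item Courant--Fischer on $\mathrm{span}(S_{k-1}^\circ\cup\{v_k\})$ is vacuous, since $v_k$ itself lies in that space.
\item Induction on the vector statement would need simplicity of, and gaps at, the lower eigenvalues, which are not assumed.
\end{itemize}
The missing input is information about $v_1,\dots,v_{k-1}$. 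You must show that $S_{k-1}$ is close to $S_{k-1}^\circ$ using only the gap $\lambdacirc_k-\lambdacirc_{k-1}\ge\Gamma_k^\circ$; your observation $v_k\perp S_{k-1}$ then finishes.

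A clean way to do this is a trace (Ky Fan) argument. Let $W\subseteq{\rm Dom}(\Lo)$ be $m$-dimensional, with $\circ$-orthogonal projection $P_W$ and leak $\ell_W\coloneq\sum_{j>m}\|P_W\vo_j\|_\circ^2$. Then
\[
	\operatorname{tr}(P_W\Lo P_W)=\sum_j\lambdacirc_j\|P_W\vo_j\|_\circ^2\ \ge\ \sum_{i\le m}\lambdacirc_i+\big(\lambdacirc_{m+1}-\lambdacirc_m\big)\,\ell_W ,
\]
since the weights $\|P_W\vo_j\|_\circ^2\in[0,1]$ sum to $m$.

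Take $W=S_m$ with $m\in\{k-1,k\}$, and compute the trace in the basis $v_1,\dots,v_m$ with Gram matrix $G=(\langle v_i,v_j\rangle_\circ)$. From $\Lo v_i=\lambda_i v_i-(\mathcal{L}-\Lo)v_i$ you get $\operatorname{tr}(P_W\Lo P_W)=\sum_{i\le m}\lambda_i-\operatorname{tr}(G^{-1}F)$. Here $F$ is the matrix of $(v,w)\mapsto\langle(\mathcal{L}-\Lo)v,w\rangle_\circ$ on $S_m\subseteq S_k$, and only its symmetric part enters. Hence $|\operatorname{tr}(G^{-1}F)|$ is at most $m$ times your $S_k$-error term. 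Combining this with the upper half of (\ref{eq: eigenvalue perturbation}) for each $i\le m$ gives $\ell_{S_m}\le Ck\,(\varepsilon_{\rm comp}\lambdacirc_k+\text{error terms})/\Gamma_k^\circ$.

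Now use the two leaks:
\begin{itemize}
\item $v_k\perp S_{k-1}$ gives $\|P_{S_{k-1}}v_k\|_\circ\le\varepsilon_{\rm comp}\|v_k\|_\circ$, hence $\|a\|_\circ^2\le 2(\varepsilon_{\rm comp}^2+\ell_{S_{k-1}})\|v_k\|_\circ^2$.
\item $v_k\in S_k$ gives $\|b\|_\circ^2\le \ell_{S_k}\|v_k\|_\circ^2$.
\end{itemize}
Use the second bound for $b$ instead of your Rayleigh-quotient bound. The latter reads $(\lambdacirc_{k+1}-\lambdacirc_k)\|b\|_\circ^2\le(\lambdacirc_k-\lambdacirc_1)\|a\|_\circ^2+\dots$, and so loses a factor of order $(\lambdacirc_k-\lambdacirc_1)/\Gamma_k^\circ$, which is not universally bounded. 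The resulting constant is $O(k)$; this is harmless, since $k$ is fixed in every application.

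Finally, absorbing the bare $\varepsilon_{\rm comp}$ that comes from $\|v_k\|_\circ^2\neq 1$ needs $\Gamma_k^\circ\le\lambdacirc_k$. This holds for $k\ge 2$ because $\lambdacirc_{k-1}\ge 0$, not because $\Gamma_k^\circ\le\lambdacirc_{k+1}$. For $k=1$ and $\lambdacirc_1=0$ the stated bound actually fails: take $\mathcal{L}=\Lo$ and $\langle\cdot,\cdot\rangle=(1+\varepsilon_{\rm comp})\langle\cdot,\cdot\rangle_\circ$. This is irrelevant for the paper, where $\lambdacirc_k$ is of order $\delta^{-1/3}$.
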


Since the norms $||\cdot||_\circ$ and $|| \cdot ||$ are uniformly equivalent thanks to (\ref{eq: inner products almost equivalent}), we can replace $||v_k-\vo_k||^2_\circ$ by $||v_k-\vo_k||^2$ in (\ref{eq: eigenvector perturbation}).

\begin{proof}
We abbreviate $E \coloneq \mathcal{L}-\Lo$. Then, the min-max principle implies
\[
	\lambda_k \leq \max_{v \in S_k^\circ}\frac{\langle \mathcal{L}v, v \rangle}{||v||^2}
	\leq \max_{v \in S_k^\circ}\frac{\langle \Lo v, v \rangle}{||v||^2}+ \max_{v \in S_k^\circ}\frac{\langle Ev, v \rangle}{||v||^2}.
\]
Moreover, using (\ref{eq: inner products almost equivalent}) we can estimate
\[
	\langle \Lo v, v \rangle \leq \langle \Lo v, v \rangle_\circ + \varepsilon_{\rm comp}||\Lo v||_\circ ||v||_\circ.
\]
For all $v \in S_k^\circ$ we have $||\Lo v||_\circ \leq \lambdacirc_k ||v||_\circ$ since $\Lo$ is non-negative. Note that, due to (\ref{eq: inner products almost equivalent}), we have $||v||_\circ^2 \leq (1+\varepsilon_{\rm comp})||v||^2 \leq 2 ||v||^2$. Combining these estimates, we deduce
\begin{align*}
	\lambda_k \leq &  (1+\varepsilon_{\rm comp}) \max_{v \in S_k^\circ}\frac{\langle \Lo v, v \rangle_\circ}{||v||_\circ^2}+2\varepsilon_{\rm comp} \lambdacirc_k 
	+ \max_{v \in S_k^\circ}\frac{\langle Ev, v \rangle}{||v||^2} \\
	= & (1+\varepsilon_{\rm comp}) \lambdacirc_k+2\varepsilon_{\rm comp} \lambdacirc_k 
	+ \max_{v \in S_k^\circ}\frac{\langle Ev, v \rangle}{||v||^2},
\end{align*}
from which the upper bound in (\ref{eq: eigenvalue perturbation}) readily follows. Exchanging the roles of $\Lo$ and $\mathcal{L}$ also yields the lower bound, thus establishing (\ref{eq: eigenvalue perturbation}).

It remains to prove (\ref{eq: eigenvector perturbation}). Decompose $v_k=\beta\vo_k+ w^\circ \in {\rm span}\{\vo_k\}+{\rm span}\{\vo_k\}^\perp$. After potentially changing $\vo_k$ up to sign, we may assume $\beta \geq 0$. Then $||v_k-\vo_k||_\circ \leq \sqrt{2}||w^\circ||_\circ$, and thus it suffices to bound $||w^\circ||_\circ$.

Note $(\Lo-\lambdacirc_k)w^\circ = (\Lo-\lambdacirc_k)v_k$, $(\Lo-\lambdacirc_k)w^\circ \perp \vo_k$, and $\Lo v_k=\mathcal{L}v_k-Ev_k=\lambda_k v_k - Ev_k$. Hence
\[
	\left\langle (\Lo-\lambdacirc_k)w^\circ, w^\circ \right\rangle_\circ
	= \left\langle (\Lo-\lambdacirc_k)v_k, v_k \right\rangle_\circ
	= (\lambda_k-\lambdacirc_k)||v_k||_\circ^2 - \langle Ev_k, v_k \rangle_\circ
\]
However, $\Gamma_k^\circ ||w^\circ||_\circ^2 \leq \langle (\Lo-\lambdacirc_k)w^\circ, w^\circ \rangle_\circ$ since the restriction of $\Lo-\lambdacirc_k$ to ${\rm span}\{\vo_k\}^\perp$ has spectral gap $\Gamma_k$. Therefore, keeping in mind $||\cdot||_\circ^2 \leq 2||\cdot||^2$, (\ref{eq: eigenvector perturbation}) follows from (\ref{eq: eigenvalue perturbation}). 
\end{proof}

\subsection{Airy equation}\label{subsec: Airy}

We use this subsection to introduce notation and to collect some facts about the Airy equation. We refer to \cite[Section 2.2]{CJN25} for further details.

The \emph{Airy equation} is
\[
	y^{\prime \prime}(x)=xy(x).
\]
The Airy functions $\Ai$ and $\Bi$ are distinguished and linearly independent solutions of the Airy equation. Both are defined for all $x \in \bbR$, both are oscillating for $x < 0$, and for $x > 0$ $\Ai$ is exponentially decaying while $\Bi$ is exponentially growing (see for example \cite[Figure 3.1 on p.~69, and (3.5.17a),(3.5.17b) on p.~100]{BO99}). We denote by
\[
	\dots < -a_3 < -a_2 < -a_1 < 0
\]
the zeros of $\Ai$.

If $y:[0,\infty) \to \bbR$ is a non-zero solution of the Airy eigenvalue problem
\[
	y^{\prime \prime}(x)=(x-\alpha) y(x)
\]
with $y(0)=0$ and $y \in L^2(0,\infty)$, then $\Ai(-\alpha)=0$, i.e., $\alpha=a_k$ for some $k \in \bbN$. Therefore, for all $k \in \bbN$, the function
\begin{equation}\label{eq: def of Airy eigfct on half-line}
	v_k(x) \coloneq \frac{\Ai(x-a_k)}{||\Ai(\cdot - a_k)||_{L^2(0,\infty)}}
\end{equation}
is the $L^2$-normalized $k$-th eigenfunction of the Airy equation on $[0,\infty)$ with Dirichlet boundary conditions and eigenvalue $a_k$.

The following will serve as the (negative of the) asymptotic model for the integral in (\ref{eq: core of strategy}) (see \Cref{cor: asymptotic integral}).

\begin{lem}[Model Integral]\label{lem: model integral}
We have
\[
	\int_0^\infty x\left(v_2^2(x)-v_1^2(x) \right) \, dx = \frac{2}{3}(a_2-a_1) > 0,
\]
where $v_1,v_2$ are defined in (\ref{eq: def of Airy eigfct on half-line}), and $0 > -a_1 > -a_2 > \dots$ are the zeros of $\Ai$.
\end{lem}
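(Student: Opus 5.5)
We prove the identity $\int_0^\infty x\,v_k^2\,dx = \tfrac{2}{3}a_k$ for each $k \in \bbN$ (with $||v_k||_{L^2(0,\infty)}=1$); subtracting the cases $k=2$ and $k=1$ then gives the claim, and positivity follows from $a_2 > a_1$. This is a virial/Pohozaev-type identity for the equation $v''=(x-a)v$ on the half-line with Dirichlet condition at $0$ and exponential decay at infinity.

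Fix $k$, write $a=a_k$, $v=v_k$, so $v''=(x-a)v$, $v(0)=0$, and $v,v'$ decay super-exponentially as $x\to\infty$. We use two identities obtained by multiplying the equation by suitable test functions and integrating by parts; all boundary terms at infinity vanish by the decay of $\Ai$.

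First, multiply by $v$ and integrate: $\int_0^\infty v v''\,dx = -\int_0^\infty (v')^2\,dx$, since the boundary term $v v'$ vanishes at $0$ (because $v(0)=0$) and at infinity. Hence
\[
	-\int_0^\infty (v')^2\,dx = \int_0^\infty x v^2\,dx - a.
\]

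Second, multiply by $v'$. The left side is $\int v'v''\,dx=\big[\tfrac12 (v')^2\big]_0^\infty = -\tfrac12 v'(0)^2$. On the right, $\int (x-a) v v'\,dx = \big[\tfrac12 (x-a)v^2\big]_0^\infty - \tfrac12\int v^2\,dx = -\tfrac12$, using $v(0)=0$ and the normalization. Thus $v'(0)^2=1$; this is consistent but does not close the system by itself.

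The identity that closes the system comes from the multiplier $x v'$ (the dilation multiplier). For the left side, $\int x v' v''\,dx = \big[\tfrac12 x (v')^2\big]_0^\infty - \tfrac12\int (v')^2\,dx = -\tfrac12\int (v')^2\,dx$, since the boundary term vanishes at $0$ because of the factor $x$. For the right side, $\int x(x-a) v v'\,dx = -\tfrac12\int (2x-a) v^2\,dx = -\int x v^2\,dx + \tfrac a2$. Therefore
\[
	\int_0^\infty (v')^2\,dx = 2\int_0^\infty x v^2\,dx - a.
\]
Combining with the first identity gives $-(2I-a) = I-a$ with $I\coloneq\int_0^\infty xv^2\,dx$, hence $3I = 2a$, i.e. $I=\tfrac23 a_k$. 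Subtracting the cases $k=2$ and $k=1$ yields $\tfrac23(a_2-a_1)$.

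The only real care needed is the justification of the boundary terms at infinity. This is immediate from the asymptotics $\Ai(x)\sim \tfrac{1}{2\sqrt{\pi}}x^{-1/4}e^{-\frac23 x^{3/2}}$ and the analogous asymptotics for $\Ai'$ (see \cite{BO99}), together with $v(0)=\Ai(-a_k)=0$. Given these, there is no genuine obstacle; the crux is simply choosing the multiplier $x v'$.
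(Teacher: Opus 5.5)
Your proof is correct, but it takes a different route from the paper. The paper simply cites \cite[Lemma 2.2]{CJN25}, which uses scaling plus Hellmann--Feynman. By the substitution $z=s^{1/3}x$, the Dirichlet operator $-\frac{d^2}{dx^2}+sx$ on $(0,\infty)$ equals $s^{2/3}\big(-\frac{d^2}{dz^2}+z\big)$, so its $k$-th eigenvalue is $s^{2/3}a_k$. Differentiating at $s=1$ via Hellmann--Feynman then gives $\int_0^\infty x v_k^2\,dx=\frac{2}{3}a_k$.

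You prove the same per-eigenfunction identity by hand, combining the energy identity (multiplier $v$) with the virial identity (multiplier $xv'$). Your integrations by parts and boundary terms check out. The $v'$-multiplier step (giving $v_k'(0)^2=1$) is correct but not needed.

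Both arguments encode the same dilation invariance of the half-line. The operator $x\partial_x+\frac12$ generates the $L^2$-preserving dilations behind the scaling argument, and your multiplier $xv'$ is its infinitesimal version.

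The paper's route is shorter and more conceptual, but it requires justifying the Hellmann--Feynman formula for the unbounded perturbation $x$. Yours is fully elementary, needing only integration by parts and the decay of $\Ai$ and $\Ai'$. It also yields the by-product $\int_0^\infty (v_k')^2\,dx=\frac{1}{3}a_k$.
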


This is proved in \cite[Lemma 2.2]{CJN25} using an easy scaling argument together with the Hellmann--Feynman identity.

\section{Eigenfunctions of \(\tildeDeltao\)}\label{sec: Laplace_0}

The goal of this section is to control the eigenfunctions of the operator $\tildeDeltao$ defined in \Cref{subsec: operators and function spaces}. The rough strategy to achieve this is as follows:
\begin{enumerate}[(1)]
	\item First, we introduce a model ODE for $\tildeDeltao$, and we show that the eigenfunctions of this model ODE are small perturbations of the eigenfunctions of the Airy equation.
	\item Then we show that, up to small errors, the eigenfunctions of $\tildeDeltao$ can be obtained from the eigenfunctions of the model ODE by a separation of variables ansatz.
\end{enumerate}
We refer to \Cref{prop: Model ODE and Airy} and \Cref{prop: Separation of variables} for the precise statements. 

To carry out this strategy, we first introduce the necessary notation and objects in \Cref{subsec: eigenfcts of Laplace_0 - set up}. Then, (1) is implemented in \Cref{subsec: Model ODE}. This can be achieved by a natural generalization of the arguments in \cite[Section 3.2]{CJN25} after correctly exploiting the fact that $X$ is negatively curved (see \Cref{lem: Monotonicity of mu_1}). On the other hand, the proof of (2) requires completely new arguments. This will be done in \Cref{subsec: Separation of variables}.

\subsection{Set up}\label{subsec: eigenfcts of Laplace_0 - set up}

We recall from (\ref{eq: def tilde(Omega)}) that $\tildeOmega$ denotes the region
\[
 	\tildeOmega = \big(0,\delta^{-1/3}(t_0+\tau_0)\big) \times B_1(0) \subseteq \bbR \times \bbR^{n-1},
\]
and that we consider $\tildeDeltao$ as an elliptic partial differential operator of second order on $\tildeOmega$. It follows from the formula (\ref{eq: def Laplace_0}) for $\Delta_\circ$ and the definition (\ref{eq: change of variables}) of the change of variables that $\tildeDeltao$ is given by
\begin{equation}\label{eq: def tilde(Laplace)_0}
	\tildeDeltao = \frac{\jaco(t_0)}{\jaco(t_0-\delta^{1/3}x)}\partial_x\left(\frac{\jaco(t_0-\delta^{1/3}x)}{\jaco(t_0)} \partial_x \right)
	+\delta^{2/3}\varepsilon^{-2}\partial_{y_i}\Big(g_{\circ}^{ij}(t_0-\delta^{1/3}x)\partial_{y_j}\Big).
\end{equation}
For ease of notation we abbreviate
\[
	\Go(t) \coloneq \big((g_\circ)_{ij}(t) \big)_{1 \leq i,j \leq n-1} \in {\rm GL}_{n-1}(\bbR),
\]
and simply write
\[
	\tildeDeltao = \frac{\jaco(t_0)}{\jaco}\partial_x\left(\frac{\jaco}{\jaco(t_0)} \partial_x \right)
	+\delta^{2/3}\varepsilon^{-2} \Div\left(\Go^{-1}\nabla \right),
\]
where $\Div$ and $\nabla$ are the standard divergence and gradient in $\bbR^{n-1}$. We will sometimes write $\Div_y$ and $\nabla_y$ to emphasize that they only act in the $y$-variable.

We also recall that $\jaco$ and $g_\circ^{ij}$ are defined in a region slightly larger than $\tildeOmega$, e.g., on $(-1,\delta^{-1/3}(t_0+\tau_0)+1) \times B_2(0)$.

Since, for all $t \in [-\tau_0,t_0]$, the first eigenvalue $\mu_1(t)$ of $-\Div(\Go^{-1}(t)\nabla)$ on $B_1(0) \subseteq \bbR^{n-1}$ with Dirichlet boundary condition is simple, $\mu_1(\cdot)$ is smooth, and there exists a smooth parametrization
\[
	f_1^{(t)} \in H_0^1\big(B_1(0) \big) \subseteq L^2\big(B_1(0)\big) 
	\quad \big(t \in [-\tau_0,t_0] \big)
\]
of the first $L^2$-normalized Dirichlet eigenfunction, i.e.,
\[
	-\Div\left(\Go^{-1}(t)\nabla f_1^{(t)} \right)=\mu_1(t)f_1^{(t)}
	\quad \text{and} \quad
	\left|\left|f_1^{(t)}\right|\right|_{L^2(B_1(0))}=1.
\]
By abstract perturbation theory, $t \mapsto f_1^{(t)}$ is a priori only smooth as a $L^2$-valued map. However, it easily follows from the Sobolev embeddings and elliptic regularity theory, that it is also smooth as a $C^m$-valued map for all $m \in \bbN$.

The following is the fundamental property of $\mu_1$ that will allow us to adapt the arguments from \cite{CJN25} to study the model ODE in \Cref{subsec: Model ODE}. Its proof will exploit that $X$ has negative sectional curvature in an essential way.

\begin{lem}[Monotonicity of $\mu_1$]\label{lem: Monotonicity of mu_1}
We have $\mu_1^\prime(t) < 0$ for every $t \in (0,t_0]$.
\end{lem}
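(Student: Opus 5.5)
The plan is to use the Hellmann--Feynman formula for the simple eigenvalue $\mu_1(t)$ and then reduce its sign to the positivity of the second fundamental form of $\Sigma_t$ along $\gamma_o$.

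\textbf{Step 1 (Hellmann--Feynman).} Since $\mu_1(t)$ is simple and $f_1^{(t)}$ depends smoothly on $t$, differentiating the Rayleigh quotient gives
\[
	\mu_1^\prime(t)=\int_{B_1(0)} \big\langle (\Go^{-1})^\prime(t)\nabla f_1^{(t)},\nabla f_1^{(t)}\big\rangle \, dy
	= -\int_{B_1(0)} \big\langle \Go^{-1}\Go^\prime\Go^{-1}\nabla f_1^{(t)},\nabla f_1^{(t)}\big\rangle \, dy .
\]
Here the normalization $\|f_1^{(t)}\|_{L^2(B_1(0))}=1$ is taken with respect to the flat measure $dy$, which does not depend on $t$, so no extra term appears. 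It therefore suffices to show that $\Go^\prime(t)$ is positive definite for $t\in(0,t_0]$. Indeed, then $\Go^{-1}\Go^\prime\Go^{-1}$ is positive definite, and $\nabla f_1^{(t)}\not\equiv 0$, which gives $\mu_1^\prime(t)<0$.

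\textbf{Step 2 (geometric identification of $\Go^\prime$).} By definition, $(g_\circ)_{ij}(t)=g_{ij}(0,t)=g(\partial_{s_i},\partial_{s_j})$ at $\gamma_o(t)$. The fields $J_i(t)=\partial_{s_i}|_{\Psi(0,t)}$ are Jacobi fields along the unit speed geodesic $\gamma_o$. Since $g_{ti}(0,t)=0$, they are orthogonal to $\dot\gamma_o$, so they span $T_{\gamma_o(t)}\Sigma_t$. Differentiating,
\[
	\partial_t (g_\circ)_{ij}=g(\nabla_{\partial_t}J_i,J_j)+g(J_i,\nabla_{\partial_t}J_j)=2\,g(\nabla_{J_i}\partial_t,J_j)=2\langle \mathcal{H}_t J_i,J_j\rangle,
\]
using $[\partial_t,\partial_{s_i}]=0$. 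Here $\mathcal{H}_t$ is the shape operator of $\Sigma_t$ with respect to $\partial_t=\nabla d(\cdot,\Sigma)$. Thus $\Go^\prime(t)=2\big(\langle\mathcal{H}_tJ_i,J_j\rangle\big)_{ij}$.

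\textbf{Step 3 (Riccati comparison, the key step).} Along $\gamma_o$ the shape operator satisfies the Riccati equation
\[
	\mathcal{H}_t^\prime+\mathcal{H}_t^2+R(\cdot,\dot\gamma_o)\dot\gamma_o=0 .
\]
By (\ref{eq: Sigma 2nd fundamental form}) we have $\mathcal{H}_0=0$, and the curvature term is negative definite because $\sec(X)<0$. So at $t=0$ we get $\mathcal{H}^\prime_0=-R>0$, which makes $\mathcal{H}_t>0$ for small $t>0$. Positivity persists for all $t>0$: if $t_1$ were the first time at which some unit vector $e$ satisfies $\langle\mathcal{H}_{t_1}e,e\rangle=0$ with $\mathcal{H}_{t_1}\ge0$, then $\mathcal{H}_{t_1}e=0$. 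Hence
\[
	\frac{d}{dt}\langle\mathcal{H}_te,e\rangle\big|_{t_1}=-\langle R(e,\dot\gamma_o)\dot\gamma_o,e\rangle>0
\]
for the parallel extension of $e$, which contradicts $\langle\mathcal{H}_te,e\rangle$ decreasing to $0$. This is exactly the comparison already invoked in the proof of \Cref{lem: convex domain}, and it is the only place where negative curvature enters. No focal points occur, since $X$ is Hadamard and $\Psi$ is a diffeomorphism on the relevant region.

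\textbf{Step 4 (conclusion).} The $J_i$ form a basis, so Step 2 and Step 3 give that $\Go^\prime(t)$ is positive definite for $t\in(0,t_0]$, and Step 1 then yields $\mu_1^\prime(t)<0$. I expect the main obstacle to be Step 3, in particular handling the degenerate initial condition $\mathcal{H}_0=0$ so as to obtain strict positivity for every $t>0$. The argument above resolves this by using the strict curvature sign at the first touching time.
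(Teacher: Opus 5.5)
Your proposal is correct and follows the paper's proof: Hellmann--Feynman gives $\mu_1^\prime=-\int\langle \Go^\prime\Go^{-1}\nabla f_1^{(t)},\Go^{-1}\nabla f_1^{(t)}\rangle\,dy$, the Jacobi fields give $\Go^\prime=2\big(\langle\mathcal{H}_tJ_i,J_j\rangle\big)_{ij}$, and Riccati comparison with $\mathcal{H}_0=0$ and $\sec(X)<0$ gives $\mathcal{H}_t>0$ for $t>0$. The only differences are minor: you prove the Riccati comparison explicitly by a first-touching-time argument, and you use linear independence of the $J_i$ where the paper uses the quantitative bound $|J_\xi|(t)\geq|\xi|$.
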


Before we give the proof of \Cref{lem: Monotonicity of mu_1}, we define our choice of constants. First, we fix $t_0 > 0$. Then we only consider $\tau_0 > 0$ such that
\begin{equation}\label{eq: def tau}
	\mu_1(-\tau_0) \geq \mu_1(t_0/4).
\end{equation}
Such $\tau_0 > 0$ exists because of \Cref{lem: Monotonicity of mu_1}. Furthermore, we define $\delta=\delta(\varepsilon) > 0$ as
\begin{equation}\label{eq: def delta}
	\delta \coloneq \frac{\varepsilon^{2}}{-\mu_1^\prime(t_0)}.
\end{equation}
Note $\delta> 0$ thanks to \Cref{lem: Monotonicity of mu_1}.

We now give the proof of \Cref{lem: Monotonicity of mu_1}. 

\begin{proof}[Proof of \Cref{lem: Monotonicity of mu_1}]
The Hellmann-Feynman identity states that, for all $t \in [-\tau_0,t_0]$,
\begin{align*}
	\mu^\prime(t)=& \left\langle f_1^{(t)}, \left(\frac{d}{dt}\Div_y(\Go^{-1}(t)\nabla_y )\right)f_1^{(t)} \right\rangle_{L^2(B_1)} \\
	=&\int_{B_1(0)} \left\langle \left(\frac{d}{dt}\Go^{-1}(t)\right)\nabla_y f_1^{(t)}, \nabla_y f_1^{(t)}\right\rangle_{\bbR^{n-1}} \, dy.
\end{align*}
Taking the derivative of $\Go\Go^{-1}={\rm id}_{\bbR^{n-1}}$ implies $(\Go^{-1})^\prime=-\Go^{-1} \Go^\prime \Go^{-1}$. Thus, keeping in mind that $\Go^{-1}$ is symmetric, we obtain, for all $t \in [-\tau_0,t_0]$,
\begin{equation}\label{eq: monotonicity of mu 1}
	\mu_1^{\prime}(t)=-\int_{B_1(0)} \left\langle \Go^\prime(t)\left(\Go^{-1}(t)\nabla_y f_1^{(t)}\right), \Go^{-1}(t)\nabla_y f_1^{(t)}\right\rangle_{\bbR^{n-1}} \, dy.
\end{equation}
Recall the notation from \Cref{subsec: domain and potential} and the proof of \Cref{lem: convex domain}, i.e., $\Sigma_t \subseteq X$ is the codimension one submanifold with signed distance $t$ to $\Sigma$ and $\mathcal{H}_t$ is the shape operator of $\Sigma_t$. Consider the normal geodesic $\gamma_o(t) \coloneq \exp_o(t\nu_o)$ starting from the fixed basepoint $o \in \Sigma$. For all $i=1,\dots,n-1$, we denote by $J_i$ the Jacobi field along $\gamma_o$ with initial conditions $J_i(0)=\partial_{s_i}|_o$ and $J^\prime(0)=0$ (see (\ref{eq: Sigma 2nd fundamental form})). Then $(g_\circ)_{ij}=\langle J_i,J_j \rangle$, and thus
\[
	(g_\circ)_{ij}^\prime(t)=\langle J_i^\prime(t),J_j(t) \rangle +\langle J_i(t),J_j^\prime(t) \rangle
	= \langle \mathcal{H}_t(J_i),J_j \rangle + \langle J_i,\mathcal{H}_t(J_j) \rangle
	= 2\langle \mathcal{H}_t(J_i),J_j \rangle.
\]
Observe that $\mathcal{H}_t|_{\gamma_o(t)} \geq c(t){\rm id}_{T\Sigma_t}$ for a function $c(\cdot)$ with $c(t) > 0$ for $t > 0$. Indeed, this follows from Riccati comparison, $\sec(X) < 0$, and the fact that $\mathcal{H}_0|_o = 0$ by (\ref{eq: Sigma 2nd fundamental form}). Therefore, writing  $J_\xi \coloneq \xi^i J_i$ for $\xi=(\xi^1,\dots,\xi^{n-1}) \in \bbR^{n-1}$, we obtain, for all $t \in (0,t_0]$, 
\[
	\langle \Go^\prime(t)\xi,\xi \rangle_{\bbR^{n-1}}
	=2 \left\langle \mathcal{H}_t\left(J_\xi \right), J_\xi \right\rangle_X \geq 2c(t)\left|J_\xi \right|_X^2.
\]
Moreover, $|J_\xi|(t) \geq |J_\xi|(0)=|\xi|$ follows from Jacobi field comparison and $\sec(X) \leq 0$. Combining this with (\ref{eq: monotonicity of mu 1}) and the fact that $\Go^{-1}(t)$ is positive definite, implies $\mu_1^\prime(t) < 0$ for all $t \in (0,t_0]$. This completes the proof.
\end{proof}

\subsection{The model ODE}\label{subsec: Model ODE}

We now introduce the model ODE for $\tildeDeltao$ and show that its eigenfunctions are small perturbations of the eigenfunctions of the Airy equation. 

Recall from \Cref{subsec: eigenfcts of Laplace_0 - set up} that $\mu_1(t)$ denotes the first Dirichlet eigenvalue of $-\Div_y(\Go^{-1}(t)\nabla_y)$ in $B_1(0) \subseteq \bbR^{n-1}$. Then, the \emph{model ODE} eigenvalue problem is:
\begin{equation}\label{eq: Model ODE}
	\frac{\jaco(t_0)}{\jaco}\frac{d}{dx}\left(\frac{\jaco}{\jaco(t_0)}\frac{d}{dx}\tilde{h}\right)=
	\left(\delta^{2/3}\varepsilon^{-2}\mu_1-\lambdaODE\right) \tilde{h}
	\quad \text{ on } \big(0,\delta^{-1/3}(t_0+\tau_0) \big)
\end{equation}
with Dirichlet boundary condition. Here, for ease of readability, we simply write $\jaco$ and $\mu_1$ for $\jaco(t_0-\delta^{1/3}x)$ and $\mu_1(t_0-\delta^{1/3}x)$. Recall that $\delta$ was defined in (\ref{eq: def delta}).

The associated ordinary differential operator
\[
	-\frac{\jaco(t_0)}{\jaco}\frac{d}{dx}\left(\frac{\jaco}{\jaco(t_0)}\frac{d}{dx}\right)+\delta^{2/3}\varepsilon^{-2}\mu_1
\] 
is an unbounded self-adjoint operator on
\[
	\tilde{L}^2\big(0,\delta^{-1/3}(t_0+\tau_0)\big) \coloneq \left( L^2\big(0,\delta^{-1/3}(t_0+\tau_0)\big), \langle \cdot, \cdot \rangle_{\tilde{L}^2} \right),
\]
where $\langle \cdot, \cdot \rangle_{\tilde{L}^2}$ is the weighted $L^2$-inner product
\begin{equation}\label{eq: def weighted L^2 norm x}
	\langle w_1, w_2 \rangle_{\tilde{L}^2} \coloneq \int_0^{(t_0+\tau_0)\delta^{-1/3}}\frac{\jaco(t_0-\delta^{1/3}x)}{\jaco(t_0)}w_1(x)w_2(x) \, dx.
\end{equation}
We denote by $(\tilde{h}_i)_{i \in \bbN} \subseteq \tilde{L}^2\big(0,\delta^{-1/3}(t_0+\tau_0)\big)$ a complete orthonormal basis of eigenfunctions of the model ODE (\ref{eq: Model ODE}), and by $\lambdaODE_1 \leq \lambdaODE_2 \leq \dots$ its eigenvalues.

The following is the main result of this subsection. In its formulation, we use the notation from \Cref{subsec: Airy}, i.e., $ 0 > -a_1 > -a_2 > \dots$ denote the zeros of $\Ai$, and $v_k$ ($k \in \bbN$) is the $L^2$-normalized eigenfunctions of the Airy equation on $(0,\infty)$ with eigenvalue $a_k$ defined in (\ref{eq: def of Airy eigfct on half-line}).

\begin{prop}[Approximation of the Model ODE by the Airy equation]\label{prop: Model ODE and Airy}
For all $t_0,\tau_0 > 0$ satisfying (\ref{eq: def tau}) and $k \in \bbN$ there exist $C > 0$ and $\varepsilon_0 > 0$ such that for all $\varepsilon \in (0,\varepsilon_0]$ we have the following estimates:
\begin{enumerate}[(i)]
\item (Eigenvalue)
	\[
		\left|\lambdaODE_k-\delta^{2/3}\varepsilon^{-2}\mu_1(t_0)-a_k \right| \leq C \delta^{1/3}
	\]
\item (Weighted integral)\\
	For every polynomial $Q \in \bbR[X]$
	\[
		\int_0^{(t_0+\tau_0)\delta^{-1/3}}|Q|(x)\left|\tilde{h}_k^2(x)-v_k^2(x) \right| \, dx \leq C_Q \delta^{1/3},
	\]
	where $C_Q$ only depends on the degree of $Q$ and an upper bound for its coefficients (and $t_0$ and $k$). In particular,
	\[
		\left| \int_0^{(t_0+\tau_0)\delta^{-1/3}}x\Big(\tilde{h}_2^2(x)-\tilde{h}_1^2(x) \Big) \, dx - \frac{2}{3}(a_2-a_1) \right| \leq C \delta^{1/3}.
	\]
\item ($H^1$-estimate)
	\[
		\left|\left| \tilde{h}_k \right|\right|_{H^1(0,\delta^{-1/3}(t_0+\tau_0))} \leq C
	\]
\end{enumerate}
\end{prop}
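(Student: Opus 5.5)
The plan is to show that, after subtracting the constant $\delta^{2/3}\varepsilon^{-2}\mu_1(t_0)$, the model ODE operator is an $O(\delta^{1/3})$-perturbation of the Airy operator $-\frac{d^2}{dx^2}+x$ on the region where the low eigenfunctions live. We then transfer the eigen-information using \Cref{lem: Guessing eigenvectors} and \Cref{lem: perturbation}.

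\textbf{Expansion of the coefficients.} By (\ref{eq: def delta}) we have $\delta^{2/3}\varepsilon^{-2}=\delta^{-1/3}/(-\mu_1'(t_0))$. Taylor expanding $\mu_1(t_0-\delta^{1/3}x)$ gives
\[
	\delta^{2/3}\varepsilon^{-2}\big(\mu_1(t_0-\delta^{1/3}x)-\mu_1(t_0)\big)= x + O(\delta^{1/3}x^2),
\]
uniformly for $x\in[0,\delta^{-1/3}(t_0+\tau_0)]$. Similarly, $\jaco(t_0-\delta^{1/3}x)/\jaco(t_0)=1+O(\delta^{1/3}x)$, and the first-order term $\frac{\jaco'}{\jaco}\cdot\delta^{1/3}\frac{d}{dx}$ has size $O(\delta^{1/3})$. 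Hence the shifted potential $W(x):=\delta^{2/3}\varepsilon^{-2}(\mu_1-\mu_1(t_0))$ is close to $x$ only for $x=O(\delta^{-1/6})$, say.

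Globally we need a lower bound instead. By \Cref{lem: Monotonicity of mu_1}, $\mu_1$ is strictly decreasing on $(0,t_0]$, so $W>0$ for $t\in(0,t_0)$. On $[-\tau_0,t_0/4]$, the choice (\ref{eq: def tau}) together with monotonicity (and continuity on $[-\tau_0,0]$) gives $\mu_1(t)-\mu_1(t_0)\ge \mu_1(t_0/4)-\mu_1(t_0)$ after possibly enlarging $\tau_0$ suitably; I would argue directly that $\min_{[-\tau_0,t_0/4]}\mu_1\ge\mu_1(t_0/4)$ using (\ref{eq: def tau}). Concretely I will prove $W(x)\ge c\min(x,\delta^{-1/3})$ on the whole interval. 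This confining bound is the key step where negative curvature enters.

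\textbf{Eigenvalue upper bound and decay.} For (i), I would use the cut-off Airy functions $\chi(\delta^{1/6}x)v_j$, $j\le k$, as test functions. Their super-exponential decay makes the cut-off error negligible, and the Taylor errors are $O(\delta^{1/3})$ in the weighted Rayleigh quotient because $\int x^m v_j^2<\infty$. Min-max then gives $\lambdaODE_k-\delta^{2/3}\varepsilon^{-2}\mu_1(t_0)\le a_k+C\delta^{1/3}$.

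With this bound, the confining estimate yields the decay of $\tilde h_k$. An Agmon/energy estimate shows $\int e^{c'x}(\tilde h_k^2+\tilde h_k'^2)\,dx\le C$, since $W-\lambda$ is bounded below by $cx-C$ for large $x$ and, via the weight, the first-order term is harmless. This gives (iii) immediately, using also $\langle -\tilde h'',\tilde h\rangle\le C$.

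\textbf{Lower bound and eigenfunctions.} Conversely, I would plug $\chi(\delta^{1/6}x)\tilde h_j$ into the Airy Rayleigh quotient on $(0,\infty)$. The exponential decay controls the moments $\int x^2\tilde h_j^2$, so the errors are again $O(\delta^{1/3})$, and min-max gives the matching lower bound. This proves (i).

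For (ii), apply \Cref{lem: Guessing eigenvectors} to the ODE operator with guess $\chi v_k$, normalized in $\tilde L^2$. The residual has norm $O(\delta^{1/3})$ because $\|x^2v_k\|$ and $\|xv_k'\|$ are finite. Simplicity and the uniform gap between consecutive eigenvalues follow from (i) and $a_{k+1}>a_k$. This gives $\|\tilde h_k-\chi v_k\|\le C\delta^{1/3}$.

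Weighted statements additionally need $\|(1+x)^m(\tilde h_k-v_k)\|_{L^2}\le C\delta^{1/3}$. I would obtain this by interpolating the $L^2$ bound with the exponential moments of both functions; this loses a little, but we can run the guessing argument in the weighted space $e^{cx}$, or simply note that $|Q|\,|\tilde h_k^2-v_k^2|\le|Q|\,|\tilde h_k-v_k|\,(|\tilde h_k|+|v_k|)$ and apply Cauchy--Schwarz. The factor $\|Q(\tilde h_k+v_k)\|$ is bounded by the decay, so only the unweighted $L^2$ difference is needed. The weight $\jaco/\jaco(t_0)$ changes norms by a factor $1+O(\delta^{1/3}x)$, which is absorbed the same way. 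The ``in particular'' statement then follows from \Cref{lem: model integral}.

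\textbf{Main obstacle.} I expect the main obstacle to be the uniform confinement and decay estimate far from $x=0$, that is, controlling $\tilde h_k$ on the long interval where the Taylor expansion fails. The plan is to handle it via monotonicity of $\mu_1$ and the choice (\ref{eq: def tau}).
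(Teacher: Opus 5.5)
Your proof works, and its architecture matches the paper's: an upper eigenvalue bound first, then decay from a confining lower bound on the shifted potential, then the matching lower bound, then an $L^2$ eigenfunction estimate followed by the Cauchy--Schwarz step for (ii). The difference is in the tools used at each step.

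\textbf{How the paper does it.} It compares the model operator with the Dirichlet Airy operator on the same finite interval $(0,R)$, $R=(t_0+\tau_0)\delta^{-1/3}$. The comparison uses the two-inner-product perturbation lemma (the analogue of \Cref{lem: perturbation}). It then passes to the half-line via exponential convergence $\alphao_k\to a_k$, $\ho_k\to v_k$. Its decay step (Claim 3) is a pointwise ODE convexity argument for $\tilde h_k^2$ and $\tilde h_k\tilde h_k'$.

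\textbf{How you do it.} You never introduce the finite-interval Airy problem. You use cut-off half-line functions as min-max test functions in both directions. You get decay from an Agmon identity in the weighted space, where the first-order term indeed disappears because the operator is symmetric there. You get the eigenfunction bound from \Cref{lem: Guessing eigenvectors}.

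That last choice is a real gain. The guessing lemma bounds the eigenfunction error linearly in the $O(\delta^{1/3})$ residual, so the rate $\delta^{1/3}$ in (ii) comes out directly. A form-perturbation bound of the type (\ref{eq: eigenvector perturbation}) only controls the square of the error. The paper's route, in exchange, needs no cut-offs, since both operators live on the same interval with exact eigenfunctions. (In your lower bound the cut-off is not even needed: extending $\tilde h_j$ by zero beyond $R$ already gives an $H^1_0(0,\infty)$ test function.)

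\textbf{One step needs repair: the region $t\in[-\tau_0,0]$, i.e.\ $x\in[\delta^{-1/3}t_0,R]$.} Condition (\ref{eq: def tau}) controls $\mu_1$ only at the single point $-\tau_0$, and continuity does not propagate it. Also, $\tau_0$ is fixed in the statement, so you cannot adjust it. Enlarging it would in fact work against (\ref{eq: def tau}), because $\mu_1$ decreases as $t\to-\infty$.

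What you need is the mirror image of \Cref{lem: Monotonicity of mu_1}: $\mu_1'>0$ on $[-\tau_0,0)$. This follows from the same Riccati/Jacobi-field argument applied to $t\mapsto\gamma_o(-t)$. It gives $\mu_1\geq\mu_1(-\tau_0)\geq\mu_1(t_0/4)$ on $[-\tau_0,0]$, hence $W\geq c\,\delta^{-1/3}$ there. Combined with $-\mu_1'\geq c>0$ on $[t_0/4,t_0]$, this yields your bound $W\geq c\min(x,\delta^{-1/3})$. This is exactly the fact the paper invokes ``by symmetry'' in its Claim 3. Without it, a well of depth $\sim\delta^{-1/3}$ near $t\le 0$ could not be ruled out, and the Agmon estimate would fail.
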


In particular, from \Cref{prop: Model ODE and Airy}(i) we obtain, for all $\varepsilon > 0$ small enough,
\begin{equation}\label{eq: eigenvalue gap ODE}
	\lambdaODE_{k+1}-\lambdaODE_k=a_{k+1}-a_k + O(\delta^{1/3}).
\end{equation}

Here, and from now on, we adopt the following convention regarding constants.

\begin{convention}\label{convention: constants}\normalfont
All implicit constants $C$ are allowed to depend on $t_0, \tau_0 > 0$, $k \in \bbN$, and the underlying metric $g$ (in particular, the local $C^m$-norm of its coefficients), but \emph{not} on $\varepsilon$. Moreover, we will usually suppress this dependence in the notation, e.g., we will just say "there exists a constant $C$" instead of "there exists a constant $C=C(t_0,\tau_0,k,g)$". 
\end{convention}

We start by observing that, up to the addition of a constant, the differential operator underlying the eigenvalue problem (\ref{eq: Model ODE}) is a small perturbation of the Airy operator. This is made precise in the following lemma. In its formulation, $\Ao \coloneq - \frac{d^2}{dx^2}+x$ denotes the Airy operator.

\begin{lem}\label{lem: Model ODE is perturbed Airy}
Define
\[
	\tilde{A} \coloneq 
	-\frac{\jaco(t_0)}{\jaco}\frac{d}{dx}\left(\frac{\jaco}{\jaco(t_0)}\frac{d}{dx}\right)
	+\delta^{2/3}\varepsilon^{-2}\Big(\mu_1\big(t_0-\delta^{1/3}x\big)-\mu_1(t_0) \Big)
\]
and, for all $i \in \bbN$,
\[
	\tilde{\alpha}_i \coloneq \lambdaODE_i-\delta^{2/3}\varepsilon^{-2}\mu_1(t_0).
\]
Then we have 
\(
	\tilde{A}\tilde{h}_i=\tilde{\alpha}_i \tilde{h}_i
\)
for all $i \in \bbN$, and
\[
	\tilde{A}-\Ao=O\big(\delta^{1/3}\big)\frac{d}{dx}+O\big(\delta^{1/3}x^2 \big){\rm id}.
\]
\end{lem}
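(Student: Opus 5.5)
The first claim, $\tilde{A}\tilde{h}_i=\tilde{\alpha}_i\tilde{h}_i$, is immediate. The model ODE (\ref{eq: Model ODE}) says that
\[
-\frac{\jaco(t_0)}{\jaco}\Big(\frac{\jaco}{\jaco(t_0)}\tilde{h}_i'\Big)' + \delta^{2/3}\varepsilon^{-2}\mu_1\tilde{h}_i = \lambdaODE_i\tilde{h}_i .
\]
Subtracting $\delta^{2/3}\varepsilon^{-2}\mu_1(t_0)\tilde{h}_i$ from both sides gives exactly $\tilde{A}\tilde{h}_i=\tilde{\alpha}_i\tilde{h}_i$. The real content is the operator expansion, which I would obtain by Taylor expanding the coefficients of $\tilde{A}$ around $t=t_0$. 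Throughout, $x$ ranges over $(0,\delta^{-1/3}(t_0+\tau_0))$, so $t=t_0-\delta^{1/3}x$ stays in the compact interval $[-\tau_0,t_0]$. On that interval $\jaco$ and $\mu_1$ are smooth, with bounds depending only on $g,t_0,\tau_0$ (cf.\ Convention~\ref{convention: constants}), and $\jaco$ is bounded below by a positive constant.

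\emph{First-order part.} Expanding the derivative gives
\[
-\frac{\jaco(t_0)}{\jaco}\frac{d}{dx}\Big(\frac{\jaco}{\jaco(t_0)}\frac{d}{dx}\Big) = -\frac{d^2}{dx^2} - \frac{\frac{d}{dx}[\jaco(t_0-\delta^{1/3}x)]}{\jaco(t_0-\delta^{1/3}x)}\frac{d}{dx}.
\]
By the chain rule, the coefficient of $\frac{d}{dx}$ equals $\delta^{1/3}(\jaco'/\jaco)(t_0-\delta^{1/3}x)$. Since $\jaco'/\jaco$ is bounded on $[-\tau_0,t_0]$, this coefficient is $O(\delta^{1/3})$ uniformly in $x$.

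\emph{Potential part.} By Taylor's theorem with remainder,
\[
\mu_1(t_0-\delta^{1/3}x)-\mu_1(t_0) = -\mu_1'(t_0)\delta^{1/3}x + R, \qquad |R|\le \tfrac12\|\mu_1''\|_{\infty}\,\delta^{2/3}x^2 .
\]
Multiplying by $\delta^{2/3}\varepsilon^{-2}$ and using $\delta\varepsilon^{-2}=-1/\mu_1'(t_0)$ from (\ref{eq: def delta}), the linear term becomes
\[
\delta^{2/3}\varepsilon^{-2}\big(-\mu_1'(t_0)\big)\delta^{1/3}x = \delta\varepsilon^{-2}\big(-\mu_1'(t_0)\big)x = x .
\]
This is exactly the potential of $\Ao$. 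The remainder becomes $\delta^{2/3}\varepsilon^{-2}R = O(\delta^{4/3}\varepsilon^{-2}x^2)$. Since $\delta^{4/3}\varepsilon^{-2}=\delta^{1/3}/(-\mu_1'(t_0))$, this is $O(\delta^{1/3}x^2)$, with constant $\|\mu_1''\|_\infty/(2|\mu_1'(t_0)|)$. That constant is finite because $\mu_1'(t_0)<0$ by \Cref{lem: Monotonicity of mu_1}.

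Combining the two parts yields $\tilde{A}-\Ao=O(\delta^{1/3})\frac{d}{dx}+O(\delta^{1/3}x^2)\,{\rm id}$.

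The only point needing care is that the constants are uniform in $\varepsilon$. This holds because every bound uses suprema over the fixed interval $[-\tau_0,t_0]$ of smooth functions, never over a domain that grows with $\varepsilon$. The smoothness of $\mu_1$ comes from the simplicity of the first Dirichlet eigenvalue, as recorded in \Cref{subsec: eigenfcts of Laplace_0 - set up}. So there is no real obstacle; the key input is the exact cancellation in the linear term, which is precisely what the choice of $\delta$ in (\ref{eq: def delta}) was designed to produce.
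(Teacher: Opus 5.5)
Your proposal is correct and follows the paper's proof. You read off $\tilde{A}\tilde{h}_i=\tilde{\alpha}_i\tilde{h}_i$ from the model ODE, compute the first-order coefficient as $\delta^{1/3}\jaco'/\jaco$ via the chain rule, and Taylor expand $\mu_1$ so that the choice of $\delta$ makes the linear term exactly $x$ with an $O(\delta^{1/3}x^2)$ remainder. Your explicit remarks on uniformity of the constants (compactness of $[-\tau_0,t_0]$ and $\mu_1'(t_0)<0$) are details the paper leaves implicit.
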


Observe that with this notation, \Cref{prop: Model ODE and Airy}(i) is equivalent to $|\tilde{\alpha}_k-a_k|\leq C \delta^{1/3}$.

\begin{proof}
The fact that $\tilde{A}\tilde{h}_i=\tilde{\alpha}_i \tilde{h}_i$ for all $i \in \bbN$ is obvious from (\ref{eq: Model ODE}). To estimate $\tilde{A}-A$ note that, by Taylor, 
\[
	\delta^{2/3}\varepsilon^{-2}\Big(\mu_1\big(t_0-\delta^{1/3}x\big)-\mu_1(t_0)\Big)
	=\delta^{2/3}\varepsilon^{-2}\big(-\mu_1^\prime(t_0)\delta^{1/3}x+O\big(\delta^{2/3}x^2 \big) \big)
	=x+O\big(\delta^{1/3}x^2 \big),
\]
where in the second equality we used the definition (\ref{eq: def delta}) of $\delta$. Moreover,
\[
	-\frac{\jaco(t_0)}{\jaco(t_0-\delta^{1/3}x)}\frac{d}{dx}\left(\frac{\jaco(t_0-\delta^{1/3}x)}{\jaco(t_0)}\frac{d}{dx}\right)=-\frac{d^2}{dx^2}+\delta^{1/3}\frac{\jaco^\prime}{\jaco}\frac{d}{dx},
\]
where $\jaco^\prime = \frac{d \jaco}{dt}$. From this the desired estimate on $\tilde{A}-\Ao$ readily follows.
\end{proof}

The proof of \Cref{prop: Model ODE and Airy} is now similar to the proof of  \cite[Proposition 3.3]{CJN25}. Consequently, we only highlight the main points and necessary adjustments, and refer to \cite{CJN25} for further details.

\begin{proof}[Proof of \Cref{prop: Model ODE and Airy}]
Throughout the proof we make use of \Cref{convention: constants} , e.g., if we say "for all $\varepsilon$ small enough" this is supposed to be understood as "for all $\varepsilon$ small enough (depending only on $t_0,\tau_0,k$, and the metric $g$)".

We will also use the following notation. For ease of notation, we set $R \coloneq (t_0+\tau_0)\delta^{-1/3}$. We denote by $(\ho_i)_{i \in \bbN} \subseteq L^2(0,R)$ a complete orthonormal system of eigenfunctions of the Airy operator $\Ao \coloneq -\frac{d^2}{dx^2}+x$ on $(0,R)$ with Dirichlet boundary data, and by $\alphao_1 \leq \alphao_2 \leq \dots$ the respective eigenvalues. Using that the eigenfunctions $v_k$ ($k \in \bbN$) of $\Ao$ on $(0,\infty)$ decay exponentially fast, one can easily show (see for example \cite[Lemma 3.2]{CJN25}) that, after potentially changing the eigenfunctions up to sign, we have, for all $k \in \bbN$,
\begin{equation}\label{eq: Airy from finite to infinite}
	\lim_{R \to \infty}\alphao_k = a_k
	\quad \text{and} \quad
	\lim_{R \to \infty}\left|\left|\ho_k-v_k\left|_{[0,R]}\right.\right|\right|_{L^2(0,R)}=0,
\end{equation}
and that in both cases the convergence is exponentially fast. 

We can now prove \Cref{prop: Model ODE and Airy} in a sequence of claims.

\smallskip\noindent
\textit{Claim 1. There exists $x^\ast >0$ and $C > 0$ such that for all $\varepsilon$ small enough we have
\[
	\left|\ho_k (x)\right|, \left|(\ho_k)^\prime(x) \right| \leq Ce^{-x} 
	\quad \text{ for all } \quad 
	x \in [x^\ast,R].
\]
In particular, for every polynomial $Q \in \bbR[X]$ we have 
\[
	\int_0^{R}|Q|(x) \ho_k(x)^2 \, dx \leq C_Q 
	\quad \text{and} \quad 
	\int_0^{R}|Q|(x) (\ho_k)^\prime(x)^2 \, dx \leq C_Q 
\]
for a finite constant $C_Q$ only depending on the degree of $Q$ and an upper bound for the absolute value of its coefficients (and $t_0,\tau_0,k$).}

In accordance with \Cref{convention: constants}, the constant $x^\ast$ is allowed to depend on $k$.

\smallskip\noindent
\textit{Proof of Claim 1.}
This follows easily from the Sturm comparison theorem. We refer to the proof of \cite[Claim 1 in the proof of Proposition 3.3]{CJN25} for further details.
\hfill$\blacksquare$

\smallskip\noindent
\textit{Claim 2. We have, for all $\varepsilon > 0$ small enough,
\[
	\tilde{\alpha}_k \leq \alphao_k + C \delta^{1/3},
\]
where $\tilde{\alpha}_k$ is as in \Cref{lem: Model ODE is perturbed Airy}.}

\smallskip\noindent
\textit{Proof of Claim 2.}
It follows from the definition (\ref{eq: def weighted L^2 norm x}) of $\langle \cdot, \cdot \rangle_{\tilde{L}^2}$ and from Claim 1 that \cite[(3.2) in Lemma 3.1]{CJN25} is satisfied with $\varepsilon_k=O\big( \delta^{1/3} \big)$. Thus, Claim 2 follows from the upper bound in \cite[(3.4) in Lemma 3.1]{CJN25}, \Cref{lem: Model ODE is perturbed Airy}, and Claim 1. We refer to the proof of \cite[Claim 2 in the proof of Proposition 3.3]{CJN25} for further details.
\hfill$\blacksquare$

\smallskip\noindent
\textit{Claim 3. There exists $x^\ast >0$ and $C > 0$ such that for all $\varepsilon$ small enough we have
\[
	\left|\tilde{h}_k(x)\right|\leq Ce^{-x} 
	\quad \text{ for all } \quad 
	x \in [x^\ast,R]
\]
and
\[
	\int_{x}^R (\tilde{h}_k^\prime)^2(\zeta) \, d\zeta \leq C e^{-x}
	\quad \text{ for all } \quad 
	x \in [x^\ast,R].
\]
In particular, for every polynomial $Q \in \bbR[X]$ we have 
\[
	\int_0^{R}|Q|(x) \tilde{h}_k^2(x) \, dx \leq C_Q 
	\quad \text{and} \quad 
	\int_0^{R}|Q|(x) \big(\tilde{h}_k^\prime(x)\big)^2 \, dx \leq C_Q.
\]}

With a tiny bit more work one could also show $|\tilde{h}_k^\prime(x)| \leq Ce^{-x}$ for all $x \geq x^\ast$.

\smallskip\noindent
\textit{Proof of Claim 3.}
From \Cref{lem: Model ODE is perturbed Airy} we have
\[
	\frac{\jaco(t_0)}{\jaco}\frac{d}{dx}\left(\frac{\jaco}{\jaco(t_0)}\tilde{h}_k^\prime\right)
	=\bigg(\delta^{2/3}\varepsilon^{-2}\Big(\mu_1\big(t_0-\delta^{1/3}x\big)-\mu_1(t_0) \Big) -\tilde{\alpha}_k \bigg)\tilde{h}_k.
\]
Abbreviate $\tilde{c}_k(x) \coloneq \delta^{2/3}\varepsilon^{-2}\Big(\mu_1\big(t_0-\delta^{1/3}x\big)-\mu_1(t_0) \Big) -\tilde{\alpha}_k$. We claim that there exists $x^\ast > 0$ such that $\tilde{c}_k(x) \geq 4$ for all $x \in [x^\ast,R]$. To see this, note that $\tilde{c}_k$ is inreasing on $[0,\delta^{-1/3}t_0]$ due to \Cref{lem: Monotonicity of mu_1}. Moreover, by Taylor (see the proof of \Cref{lem: Model ODE is perturbed Airy}), we also have
\[
	\delta^{2/3}\varepsilon^{-2}\Big(\mu_1\big(t_0-\delta^{1/3}x\big)-\mu_1(t_0) \Big) 
	=x+O\big(\delta^{1/3}x^2\big).
\]
Finally, it follows from (\ref{eq: Airy from finite to infinite}) and Claim 2 that $\tilde{\alpha}_k \leq a_k+1$ for all $\varepsilon > 0$ small enough. Therefore, if we set $x^\ast \coloneq a_k+10$, then for all $x \in [x^\ast,\delta^{-1/3}t_0]$ and all $\varepsilon > 0$ small enough
\[
	\tilde{c}_k(x) \geq \tilde{c}_k(x^\ast)=x^\ast+O\big(\delta^{1/3}(x^\ast)^2\big)-\tilde{\alpha}_k \geq 4.
\]
Note that, by symmetry, \Cref{lem: Monotonicity of mu_1} also gives $\mu_1^\prime(t) > 0$ for $t \in [-\tau_0,0)$. Hence, by our choice (\ref{eq: def tau}) of $\tau_0$, we also have $\mu_1(t) \geq \mu_1(t_0/4)$ for all $t \in [-\tau_0,0)$. From this one can easily deduce that $\tilde{c}_k(x) \geq 4$ also holds for all $x \in [\delta^{-1/3}t_0,R]$. This completes the proof that $\tilde{c}_k(x) \geq 4$ for all $x \in [x^\ast,R]$.

Therefore,
\[
	\tilde{h}_k^{\prime \prime}(x)=\tilde{c}_k(x)\tilde{h}_k(x)+\delta^{1/3}b(x)\tilde{h}_k^\prime(x)
\]
for a continuous function $b$ with $||b||_{C^0}=O(1)$, and $\tilde{c}_k(x) \geq 4$ for $x \in [x^\ast,R]$. Exploiting the chain of inequalities, valid for all $\varepsilon > 0$ small enough,
\begin{align*}
	\left(\frac{1}{2}\tilde{h}_k^2	 \right)^{\prime \prime}
	=\big(\tilde{h}_k \tilde{h}_k^\prime\big)^{\prime}
	=\left(\tilde{h}_k^\prime \right)^2+\tilde{h}_k \big(\tilde{c}_k\tilde{h}_k+\delta^{1/3}b\tilde{h}_k^\prime\big)
	\geq \frac{1}{2}\left(\tilde{h}_k^\prime \right)^2
	+\frac{\tilde{c}_k}{2}\tilde{h}_k^2
	\geq \sqrt{\tilde{c}_k}\big| \tilde{h}_k \tilde{h}_k^\prime \big|
\end{align*}
one can easily deduce the desired estimates. Indeed, we first observe that $\tilde{h}_k$ and $|\tilde{h}_k \tilde{h}_k^\prime|$ are exponentially decaying in $[x^\ast,R]$ because $(\frac{1}{2}\tilde{h}_k^2)^{\prime \prime} \geq \frac{\tilde{c}_k}{2}\tilde{h}_k^2$ and $(\tilde{h}_k\tilde{h}_k^\prime)^\prime \geq \sqrt{\tilde{c}_k} |\tilde{h}_k \tilde{h}_k^\prime |$. Consequently, integrating $(\tilde{h}_k\tilde{h}_k^\prime)^\prime \geq \frac{1}{2} (\tilde{h}_k^\prime)^2$ yields that $\int_x^R (\tilde{h}_k^\prime)^2(\zeta) d\zeta$ is exponentially decaying. We refer to the proof of \cite[Claim 3 in the proof of Proposition 3.3 in Section 4.2]{CJN25} for further details.
\hfill$\blacksquare$

\smallskip\noindent
\textit{Claim 4. We have, for all $\varepsilon > 0$ small enough,
\[
	|\tilde{\alpha}_k - \alphao_k | \leq C \delta^{1/3},
\]
where $\tilde{\alpha}_k$ is as in \Cref{lem: Model ODE is perturbed Airy}.}

\smallskip\noindent
\textit{Proof of Claim 4.}
Using Claim 3 instead of Claim 1, this follows from \cite[(3.4) in Lemma 3.1]{CJN25} and \Cref{lem: Model ODE is perturbed Airy} exactly as in the proof of Claim 2.
\hfill$\blacksquare$

\smallskip\noindent
\textit{Claim 5. After potentially changing $\tilde{h}_k$ up to sign, we have, for all $\varepsilon > 0$ small enough,
\[
	||\tilde{h}_k - \ho_k ||_{L^2(0,R)} \leq C \delta^{1/3}.
\]}

\smallskip\noindent
\textit{Proof of Claim 5.}
This immediately follows from \Cref{lem: Model ODE is perturbed Airy}, Claim 1, Claim 3, and \cite[(3.5) in Lemma 3.1]{CJN25}. We refer to the proof of \cite[Claim 5 in the proof of Proposition 3.3]{CJN25} for further details.
\hfill$\blacksquare$

With Claims 1-5 at hand, we can now easily complete the proof of \Cref{prop: Model ODE and Airy}.

Recall that $|\alphao_k-a_k|$ is exponentially small in $R=\delta^{-1/3}(t_0+\tau_0)$ by (\ref{eq: Airy from finite to infinite}). Thus, keeping in mind the definition of $\tilde{\alpha}_k$ (see \Cref{lem: Model ODE is perturbed Airy}), \Cref{prop: Model ODE and Airy}(i) follows immediately from Claim 4.

Similarly, $||\ho_k-v_k||_{L^2(0,R)}$ is exponentially small in $R=\delta^{-1/3}(t_0+\tau_0)$ by (\ref{eq: Airy from finite to infinite}), and thus $||\tilde{h}_k-v_k||_{L^2(0,R)}=O(\delta^{1/3})$ due to Claim 5. For any polynomial $Q \in \bbR[X]$ we have
\[
    \int_0^{R}|Q|(x) \left|\tilde{h}_k^2(x)-v_k^2(x)\right| \, dx
    \leq \big(\left|\left|Q\tilde{h}_k \right|\right|_{L^2}
    +\left|\left|Qv_k \right|\right|_{L^2}\big)
    \left|\left|\tilde{h}_k-v_k \right|\right|_{L^2}
\]
due to the third binomial formula, the Cauchy-Schwarz inequality, and the triangle inequality. Note that Claim 1 also holds for $v_k$ (with the same proof).  Thus the first estimate in \Cref{prop: Model ODE and Airy}(ii) follows from Claim 1 for $v_k$, Claim 3, and the unweighted estimate $||\tilde{h}_k-v_k||_{L^2}=O(\delta^{1/3})$. The second estimate in \Cref{prop: Model ODE and Airy}(ii) follows from the first and \Cref{lem: model integral}.

Finally, \Cref{prop: Model ODE and Airy}(iii) is the special case of Claim 3 with $Q \equiv 1$.
\end{proof}

\subsection{Approximate separation of variables}\label{subsec: Separation of variables}

The goal of this subsection is to show that, up to small errors, the eigenfunctions of $\tildeDeltao$ can essentially be obtained from the eigenfunctions of the model ODE (\ref{eq: Model ODE}) by a separation of variables ansatz.

To formulate the result, we continue to use the notation introduced in \Cref{subsec: eigenfcts of Laplace_0 - set up}, and recall that $f_1^{(t)}$ ($t \in [-\tau_0,t_0]$) denotes the first $L^2$-normalized Dirichlet eigenfunction of $-\Div_y\big(\Go^{-1}(t)\nabla_y \big)$ in $B_1(0) \subseteq \bbR^{n-1}$. In accordance with the change of variables (\ref{eq: change of variables}), we define
\[
	\tilde{f}_1^{(x)} \coloneq f_1^{(t_0-\delta^{1/3}x)}.
\]
Observe that, for all $x \in [0,\delta^{-1/3}(t_0+\tau_0)]$, we have
\begin{equation}\label{eq: bounds for f_1}
	\left|\left|\partial_x \tilde{f}_1^{(x)}\right|\right|_{L^2(B_1)} \leq C \delta^{1/3}
	\quad \text{and} \quad 
	\left|\left|\partial^2_{xx} \tilde{f}_1^{(x)}\right|\right|_{L^2(B_1)} \leq C \delta^{2/3}
\end{equation}
for a constant $C$ only depending on $t_0,\tau_0$ and $(X,g)$. Indeed, this just follows from the fact that $t \mapsto f_1^{(t)}$ is smooth as a $L^2$-valued map.

Recall that $(\tilde{h}_i)_{i \in \bbN} \subseteq \tilde{L}^2\big(0,\delta^{-1/3}(t_0+\tau_0) \big)$ denotes a complete orthonormal basis of eigenfunctions of the model ODE (\ref{eq: Model ODE}), and $\tilde{L}^2$ is defined by the weighted $L^2$-norm (\ref{eq: def weighted L^2 norm x}).

We define $\uGuess_i \in H^2(\tildeOmega) \cap H_0^1(\tildeOmega)$ for all $i \in \bbN$ by
\begin{equation}\label{eq: u Guess}
	\uGuess_i \, \colon \tildeOmega \longrightarrow \bbR, \, 
	(x,y) \longmapsto \tilde{h}_i(x)\tilde{f}_1^{(x)}(y),
\end{equation}
where $\tildeOmega \subseteq \bbR \times \bbR^{n-1}$ is the region defined in (\ref{eq: def tilde(Omega)}).

Finally, we denote by $(\uo_i)_{i \in \bbN} \subseteq \tilde{L}^2_\circ(\tildeOmega)$ a complete orthonormal basis of eigenfunctions of $-\tildeDeltao$ in $\tildeOmega$ with Dirichlet boundary data, and by $\lambdao_1 \leq \lambdao_2 \leq \dots$ its eigenvalues. Here $\tilde{L}^2_\circ(\tildeOmega)$ denotes the weighted $L^2$-space defined in (\ref{eq: def weighted L_0^2 norm xy}).

We can now formulate the main result of this subsection.

\begin{prop}[Approximate Separation of Variables]\label{prop: Separation of variables}
For all $t_0,\tau_0 > 0$ satisfying (\ref{eq: def tau}) and $k \in \bbN$ there exist $C > 0$ and $\varepsilon_0 > 0$ such that for all $\varepsilon \in (0,\varepsilon_0]$ we have the following estimates:
\begin{enumerate}[(i)]
	\item (Eigenvalue)
	\[
		\left|\lambdao_k - \lambdaODE_k \right| \leq C \delta^{1/3}
	\]
	\item (Eigenfunction)
	\[
		\left|\left|\uo_k - \uGuess_k \right|\right|_{\tilde{L}^2_\circ(\tildeOmega)} \leq C\delta^{1/3}
	\]
\end{enumerate}
\end{prop}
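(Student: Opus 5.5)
The plan is to treat $\uGuess_i$ as quasimodes for $-\tildeDeltao$ and to control the rest of the spectrum through a decomposition along the fibre eigenfunctions. Both estimates should follow from \Cref{lem: Guessing eigenvectors}, once we know the spectral gaps of $-\tildeDeltao$ near $\lambdao_k$ are of order one.

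\textbf{Step 1 (quasimode residual).} Apply $-\tildeDeltao$ to $\uGuess_i=\tilde h_i\tilde f_1^{(x)}$. The fibre part gives $\delta^{2/3}\varepsilon^{-2}\mu_1\tilde h_i\tilde f_1^{(x)}$, and by the model ODE (\ref{eq: Model ODE}) the pure $x$-derivatives of $\tilde h_i$ give $(\delta^{2/3}\varepsilon^{-2}\mu_1-\lambdaODE_i)\tilde h_i\tilde f_1$. Hence
\[
(-\tildeDeltao-\lambdaODE_i)\uGuess_i=-2\tilde h_i'\,\partial_x\tilde f_1^{(x)}-\tilde h_i\,\partial_{xx}^2\tilde f_1^{(x)}-\delta^{1/3}\tfrac{\jaco'}{\jaco}\tilde h_i\,\partial_x\tilde f_1^{(x)}.
\]
By (\ref{eq: bounds for f_1}) and \Cref{prop: Model ODE and Airy}(iii), this has $\tilde L^2_\circ$-norm $O(\delta^{1/3})$. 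The weights are comparable to $1$ on $\tildeOmega$, because $\jaco(t_0-\delta^{1/3}x)/\jaco(t_0)$ is bounded above and below for $x\le R$. The same weights show that the $\uGuess_i$ are almost orthonormal: $\langle \uGuess_i,\uGuess_j\rangle_{\tilde L^2_\circ}=\langle\tilde h_i,\tilde h_j\rangle_{\tilde L^2}=\delta_{ij}$ exactly, since $\|\tilde f_1^{(x)}\|_{L^2(B_1)}=1$. By \Cref{lem: Guessing eigenvectors}, for each $i\le k+1$ there is some eigenvalue of $-\tildeDeltao$ within $C\delta^{1/3}$ of $\lambdaODE_i$. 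Using a Gram-matrix argument on $\mathrm{span}\{\uGuess_1,\dots,\uGuess_j\}$ together with min-max, I obtain the upper bound $\lambdao_j\le\lambdaODE_j+C\delta^{1/3}$ for $j\le k+1$.

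\textbf{Step 2 (lower bound; the main obstacle).} I must rule out spurious eigenvalues of $-\tildeDeltao$ below $\lambdaODE_{k+1}+c$ that are not close to any $\lambdaODE_j$, and here the new ideas are needed. Write any $u\in H_0^1(\tildeOmega)$ as $u=h(x)\tilde f_1^{(x)}(y)+w$ with $\langle w(x,\cdot),\tilde f_1^{(x)}\rangle_{L^2(B_1)}=0$ for every $x$. On the fibre, the orthogonal component satisfies $\int|\Go^{-1/2}\nabla_y w|^2\ge\mu_2(t)\|w\|^2$. The fibre gap $\mu_2-\mu_1\ge c>0$ is uniform on $[-\tau_0,t_0]$, and after multiplying by $\delta^{2/3}\varepsilon^{-2}=(-\mu_1'(t_0))^{-2/3}\varepsilon^{-2/3}\to\infty$ it becomes huge. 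Expanding the quadratic form $\langle-\tildeDeltao u,u\rangle$, the cross terms coming from $\partial_x\tilde f_1$ are $O(\delta^{1/3})(\|h'\|\,\|w\|+\|h\|\,\|\partial_xw\|)$. These can be absorbed, which gives
\[
Q_\circ(u)\ge (1-C\delta^{1/3})\,Q_{\rm ODE}(h)+c\,\varepsilon^{-2/3}\|w\|^2-C\delta^{1/3}\|u\|^2 .
\]
Min-max then shows $\lambdao_j\ge\lambdaODE_j-C\delta^{1/3}$ for $j\le k+1$. This requires that $Q_{\rm ODE}$ be bounded on the relevant low-energy subspace, which holds because the eigenvalues there stay bounded; the constant $\delta^{2/3}\varepsilon^{-2}\mu_1(t_0)$ shifts both sides identically. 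Controlling these cross terms uniformly, with $\partial_x w$ appearing, is where I expect the real work to lie. If it proves hard, I would first try a Cauchy--Schwarz step with a weight $\eta$ and use the $\varepsilon^{-2/3}$ factor to absorb $\eta^{-1}\delta^{2/3}\|u\|_{H^1}^2$.

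\textbf{Step 3 (conclusion).} Steps 1 and 2 give (i). Combined with (\ref{eq: eigenvalue gap ODE}), they show that the gap $\min\{\lambdao_{k+1}-\lambdao_k,\lambdao_k-\lambdao_{k-1}\}$ is at least $\tfrac12\min\{a_{k+1}-a_k,\,a_k-a_{k-1}\}>0$ for $\varepsilon$ small. In particular $\lambdao_k$ is simple, and the index $j$ produced by \Cref{lem: Guessing eigenvectors} for $\uGuess_k$ must equal $k$. The eigenvector estimate (\ref{eq: guessing eigenvectors}) then gives (ii) with error $C\delta^{1/3}$.
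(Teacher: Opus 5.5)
Your proposal is correct, and it reaches the result by a different and somewhat cleaner route than the paper.

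\textbf{The paper's route.} The paper works eigenfunction by eigenfunction. It projects each $\uo_j$ onto the fibre ground state to obtain $\hGuess_j$. In \Cref{lem: PDE to ODE 1} it shows that $\hGuess_j$ is an approximate eigenfunction of the model ODE, but only up to $O(\delta^{1/6})$, because it uses the crude bound $\|\partial_x\uo_j\|=O(\delta^{-1/6})$. In \Cref{lem: PDE to ODE 2} it shows that the $\hGuess_j$ are almost orthonormal via the amplified fibre gap, discarding the $x$-energy entirely. It then applies \Cref{lem: Guessing eigenvectors} twice, with an index-matching argument ($i_j=j$, then $j_i=i$), to upgrade $O(\delta^{1/6})$ to $O(\delta^{1/3})$.

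\textbf{Your route.} You prove a Born--Oppenheimer-type lower bound for the quadratic form of an arbitrary $u=h\tilde f_1^{(x)}+w$. Combined with your min--max upper bound (the $\uGuess_i$ are exactly orthonormal), this gives the two-sided estimate (i) at the sharp rate directly. You need neither \Cref{lem: PDE to ODE 1} nor any index bookkeeping. Then (ii) follows from one application of \Cref{lem: Guessing eigenvectors}, exactly as in the paper's last step.

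\textbf{The step you flagged is harmless.} Use $\int_{B_1}\tilde f_1^{(x)}\partial_x\tilde f_1^{(x)}\,dy=0$ and $\int_{B_1}\tilde f_1^{(x)}\partial_x w\,dy=-\int_{B_1}\partial_x\tilde f_1^{(x)}\,w\,dy$. Then fibrewise
\[
\int_{B_1}|\partial_x u|^2\,dy=(h')^2-2h'\langle\partial_x\tilde f_1^{(x)},w\rangle+\|h\,\partial_x\tilde f_1^{(x)}+\partial_x w\|^2 .
\]
The $\partial_x w$ cross term therefore sits inside a perfect square. Only the middle term needs Young's inequality, at a cost of $C\delta^{1/3}((h')^2+\|w\|^2)$. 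The $\|w\|^2$ part is absorbed by the $\varepsilon^{-2/3}$ fibre gap.

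\textbf{Points to handle carefully when writing it out.}
\begin{enumerate}[(1)]
\item The factor $1-C\delta^{1/3}$ may only multiply $\int (h')^2$, or the nonnegative shifted form of $\tilde A$ from \Cref{lem: Model ODE is perturbed Airy}. It must not multiply the full $Q_{\rm ODE}(h)$, because $\delta^{1/3}\cdot\delta^{2/3}\varepsilon^{-2}\mu_1(t_0)$ is of order one. Your remark about the shift suggests you are aware of this, but the displayed inequality as literally written is too lossy.
\item The $w$-term should read $(\delta^{2/3}\varepsilon^{-2}\mu_1(t_0)+c\varepsilon^{-2/3})\|w\|^2$. This uses $\mu_2(t)\geq\mu_1(t)+\mu_{\rm gap}\geq\mu_1(t_0)+\mu_{\rm gap}$, which relies on \Cref{lem: Monotonicity of mu_1} and the choice (\ref{eq: def tau}) of $\tau_0$.
\item Min--max requires $u\mapsto h$ to be injective on ${\rm span}\{\uo_1,\dots,\uo_j\}$. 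Your inequality provides this, since together with the upper bound it forces $\|w\|^2\leq C\varepsilon^{2/3}\|h\|^2$ on that span. Equivalently, compare with the direct-sum operator under the unitary splitting $u\mapsto(h,w)$.
\end{enumerate}
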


The strategy for \Cref{prop: Separation of variables} is as follows. First, we show that every $\uGuess_i$ is an approximate eigenfunction. From this we can deduce that every $\lambdaODE_i$ is close to some $\lambdao_{j_i}$. Then we show that any eigenfunction $\uo_j$ induces an approximate eigenfunction $\hGuess_j$ of the model ODE (\ref{eq: Model ODE}), and that the $\hGuess_j$ ($j \in \bbN$) are approximately orthonormal. For the latter, we need to show that "most of" $\uo_j(x,\cdot)$ projects onto $\tilde{f}_1^{(x)}$ in $L^2(B_1)$ (the idea for this is explained after \Cref{lem: PDE to ODE 2}). From this we can deduce \Cref{prop: Separation of variables}.

We now properly implement this strategy in a sequence of lemmas. Throughout, we continue to make use of \Cref{convention: constants} and suppress the dependence of $t_0,\tau_0$ in the formulation of the following lemmas.

\begin{lem}\label{lem: ODE to PDE}
For every $k \in \bbN$ there exists a constant $\varepsilon_0 > 0$ with the following property.
For all $\varepsilon \in (0, \varepsilon_0]$ and $i=1,\dots,k$ we have
\[
	\left|\left|(-\tildeDeltao -\lambdaODE_i)\uGuess_i \right| \right|_{\tilde{L}^2_\circ(\tildeOmega)}=O\big( \delta^{1/3}\big)
	\quad \text{and} \quad
	\left|\left|\uGuess_i \right| \right|_{\tilde{L}^2_\circ(\tildeOmega)} = 1.
\]
In particular, for all $i=1,\dots,k$, there exists an eigenvalue $\lambdao_{j_i}$ of $-\tildeDeltao$ with 
\[
	\left| \lambdaODE_i-\lambdao_{j_i} \right| = O\big( \delta^{1/3}\big).
\]
Moreover, $j_1 <  \dots < j_k$.
\end{lem}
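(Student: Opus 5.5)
The plan is to compute $-\tildeDeltao \uGuess_i$ directly from the formula (\ref{eq: def tilde(Laplace)_0}) and compare it with $\lambdaODE_i\uGuess_i$ using the model ODE (\ref{eq: Model ODE}).

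\textbf{Normalization.} Since $\uGuess_i(x,y)=\tilde{h}_i(x)\tilde{f}_1^{(x)}(y)$ and $\|\tilde{f}_1^{(x)}\|_{L^2(B_1)}=1$ for every $x$, Fubini gives
\[
\|\uGuess_i\|^2_{\tilde{L}^2_\circ(\tildeOmega)}=\int_0^R \frac{\jaco(t_0-\delta^{1/3}x)}{\jaco(t_0)}\tilde{h}_i^2(x)\,dx=\|\tilde{h}_i\|_{\tilde{L}^2}^2=1,
\]
because the weight in (\ref{eq: def weighted L_0^2 norm xy}) does not depend on $y$.

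\textbf{Residual.} The $y$-part of $\tildeDeltao$ acts only on $\tilde{f}_1^{(x)}$ and yields $-\delta^{2/3}\varepsilon^{-2}\mu_1\,\tilde{h}_i\tilde{f}_1^{(x)}$. The $x$-part is $\partial_x^2+\delta^{1/3}b(x)\partial_x$, where $b=-\jaco^\prime/\jaco$ is bounded (as in \Cref{lem: Model ODE is perturbed Airy}). Applying it with the product rule gives
\[
(\text{ODE operator applied to } \tilde{h}_i)\,\tilde{f}_1^{(x)}+2\tilde{h}_i^\prime\,\partial_x\tilde{f}_1^{(x)}+\tilde{h}_i\,\partial_{xx}^2\tilde{f}_1^{(x)}+\delta^{1/3}b\,\tilde{h}_i\,\partial_x\tilde{f}_1^{(x)}.
\]
The first term, combined with the $y$-part, cancels exactly against $\lambdaODE_i\uGuess_i$ by (\ref{eq: Model ODE}). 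So the residual $(-\tildeDeltao-\lambdaODE_i)\uGuess_i$ consists only of the last three terms.

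\textbf{Bounding the error terms.} The weight is bounded above on the compact $t$-range, so it suffices to bound the three terms in unweighted $L^2$. By (\ref{eq: bounds for f_1}), $\|\partial_x\tilde f_1^{(x)}\|_{L^2(B_1)}\le C\delta^{1/3}$ and $\|\partial_{xx}^2\tilde f_1^{(x)}\|_{L^2(B_1)}\le C\delta^{2/3}$ uniformly in $x$. By Fubini, the residual norm is therefore at most
\[
C\delta^{1/3}\|\tilde h_i^\prime\|_{L^2}+C\delta^{2/3}\|\tilde h_i\|_{L^2}.
\]
By \Cref{prop: Model ODE and Airy}(iii), this is $O(\delta^{1/3})$ for $i\le k$.

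It remains to check that $\uGuess_i$ lies in the domain $H^2\cap H^1_0$. It vanishes on the top and bottom faces because $\tilde h_i$ does, and on the lateral boundary because $\tilde f_1^{(x)}$ does. It is $H^2$ because $x\mapsto \tilde f_1^{(x)}$ is smooth with values in $C^m$ and $\tilde h_i$ is smooth.

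\textbf{Eigenvalue existence.} The first part of \Cref{lem: Guessing eigenvectors} then gives, for each $i\le k$, some index $j_i$ with $|\lambdaODE_i-\lambdao_{j_i}|=O(\delta^{1/3})$.

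\textbf{Ordering $j_1<\dots<j_k$.} This is the only step that is not routine, and the approach is the following. By (\ref{eq: eigenvalue gap ODE}), the values $\lambdaODE_1<\dots<\lambdaODE_k$ are separated by gaps of at least $\tfrac12\min_{i<k}(a_{i+1}-a_i)>0$ once $\varepsilon$ is small. Since each $\lambdao_{j_i}$ lies within $O(\delta^{1/3})$ of $\lambdaODE_i$, the eigenvalues $\lambdao_{j_i}$ are pairwise distinct, and they inherit the same order as the $\lambdaODE_i$. Because the $\lambdao_j$ are listed in nondecreasing order, distinct and strictly increasing values must have strictly increasing indices. If some $\lambdao_{j}$ has multiplicity, choose $j_i$ as the smallest index with that value; the strict inequalities still follow, because the values themselves are separated.
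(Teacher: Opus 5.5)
Your proposal is correct and follows essentially the same route as the paper's proof. It uses the same product-rule computation in which the model ODE (\ref{eq: Model ODE}) and the eigenvalue equation for $\tilde{f}_1^{(x)}$ cancel the leading terms, the same bounds on the remaining terms via \Cref{prop: Model ODE and Airy}(iii) and (\ref{eq: bounds for f_1}), and the same appeal to \Cref{lem: Guessing eigenvectors} and the gap (\ref{eq: eigenvalue gap ODE}) for $j_1<\dots<j_k$. Your extras are harmless: writing the $x$-part as $\partial_x^2+\delta^{1/3}b\,\partial_x$, checking $\uGuess_i\in H^2\cap H_0^1$, and the multiplicity remark, which is not needed since strictly increasing values in a nondecreasing list already force strictly increasing indices.
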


Indeed, this just follows from an easy application of \Cref{lem: Guessing eigenvectors}.

\begin{proof}
Since  $||\tilde{h}_i||_{\tilde{L}^2(0,\delta^{-1/3}(t_0+\tau_0))}=1$, and $\big|\big| \tilde{f}^{(x)} \big|\big|_{L^2(B_1)}=1$ for all $x \in [0,\delta^{-1/3}(t_0+\tau_0)]$, we easily obtain 
\[
	\big|\big| \uGuess_i \big|\big|_{\tilde{L}^2_\circ(\tildeOmega)}=1.
\]
So it suffices to bound $(-\tildeDeltao -\lambdaODE_i)\uGuess_i$. To do so we compute
\begin{align*}
	\partial_x\left(\jaco \partial_x \uGuess_i \right) =& \frac{d}{dx}\left(\jaco \tilde{h}^\prime_i \right)\tilde{f}^{(x)} \\
	&+2\jaco \tilde{h}_i^\prime \partial_x \tilde{f}^{(x)}+\left(\frac{d}{dx} \jaco \right)\tilde{h}_i \partial_x \tilde{f}^{(x)}+\jaco \tilde{h}_i \partial^2_{xx}\tilde{f}^{(x)}.
\end{align*}
Hence, by (\ref{eq: def tilde(Laplace)_0}),
\begin{align*}
	\tildeDeltao \uGuess_i
	=& \frac{1}{\jaco}\partial_x\left(\jaco \partial_x  \uGuess_i\right)
	+\delta^{2/3}\varepsilon^{-2} \Div_y\left(\Go^{-1}(t_0-\delta^{1/3}x) \nabla_y \uGuess_i\right) \\
	=& \left( \frac{1}{\jac(t_0)}\frac{d}{dx}\left(\jaco \tilde{h}^\prime_i \right) \right)\tilde{f}^{(x)}+\tilde{h}_i \delta^{2/3}\varepsilon^{-2} \Div_y\left(\Go^{-1}(t_0-\delta^{1/3}x) \nabla_y \tilde{f}_1^{(x)}\right) \\
	&+ 2\tilde{h}_i^\prime \partial_x \tilde{f}^{(x)}+\frac{1}{\jaco}\left(\frac{d}{dx} \jaco \right)\tilde{h}_i \partial_x \tilde{f}^{(x)}+ \tilde{h}_i \partial^2_{xx}\tilde{f}^{(x)}.
\end{align*}
Keeping in mind that, by definition of $\tilde{f}_1^{(x)}$, 
\[
	-\Div_y\left(\Go^{-1}(t_0-\delta^{1/3}x) \nabla_y \tilde{f}_1^{(x)}\right)=\mu_1(t_0-\delta^{1/3}x)\tilde{f}_1^{(x)},
\]
and that, from the eigenvalue problem (\ref{eq: Model ODE}) for $\tilde{h}_i$,
\[
	\frac{1}{\jaco}\frac{d}{dx}\left(\jaco\frac{d}{dx}\tilde{h}_i\right)=
	\left(\delta^{2/3}\varepsilon^{-2}\mu_1-\lambdaODE_i\right) \tilde{h}_i,
\]
we obtain
\begin{align*}
	\tildeDeltao \uGuess_i
=&-\lambdaODE_i \uGuess_i 
	+ 2\tilde{h}_i^\prime \partial_x \tilde{f}^{(x)}+\frac{1}{\jaco}\left(\frac{d}{dx} \jaco \right)\tilde{h}_i \partial_x \tilde{f}^{(x)}+ \tilde{h}_i \partial^2_{xx}\tilde{f}^{(x)}.
\end{align*}
Therefore, the desired estimate on $(-\tildeDeltao-\lambdaODE_i)\uGuess_i$ follows from \Cref{prop: Model ODE and Airy}(iii), $\left|\frac{d}{dx} \jaco \right|=O\big(\delta^{1/3} \big)$, and (\ref{eq: bounds for f_1}). 

The fact that every $\lambdaODE_i$ is $O(\delta^{1/3})$-close to some eigenvalue $\lambdao_{j_i}$ of $-\tildeDeltao$ then follows immediately from (\ref{eq: guessing eigenvalues}) in \Cref{lem: Guessing eigenvectors}. Finally, $j_i < j_{i+1}$ for all $i=1,\dots,k-1$ follows from the fact that, thanks to (\ref{eq: eigenvalue gap ODE}),
\[
	\lambdaODE_{i+1}-\lambdaODE_i = a_{i+1}-a_i+O(\delta^{1/3}) \geq (a_{i+1}-a_i)/2 > 0
\] 
is uniformly bounded from below for all $\varepsilon > 0$ small enough. 
\end{proof}

In particular, we easily obtain the following upper bound.

\begin{cor}\label{cor: upper bound lambda(Delta_0)}
For all $k \in \bbN$ there exists $\varepsilon_0 > 0$ such that for all $\varepsilon \in (0, \varepsilon_0]$ we have
\[
	\lambdao_k \leq \lambdaODE_k + O\big(\delta^{1/3} \big).
\]
\end{cor}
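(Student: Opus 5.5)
This follows from \Cref{lem: ODE to PDE} by a counting argument. Apply \Cref{lem: ODE to PDE} with the given $k$. For $\varepsilon\in(0,\varepsilon_0]$ it yields indices $j_1<j_2<\dots<j_k$ with
\[
	\left|\lambdaODE_i-\lambdao_{j_i}\right| \leq C\delta^{1/3}
	\quad \text{for } i=1,\dots,k.
\]
Since the $j_i$ are strictly increasing natural numbers with $j_1\geq 1$, induction gives $j_i\geq i$, and in particular $j_k\geq k$.

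The eigenvalues of $-\tildeDeltao$ are ordered non-decreasingly. Hence
\[
	\lambdao_k \leq \lambdao_{j_k} \leq \lambdaODE_k + C\delta^{1/3},
\]
which is the claim.

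The only point needing care is the strict monotonicity $j_1<\dots<j_k$. Without it, several $\lambdaODE_i$ could be matched to the same eigenvalue $\lambdao_j$, and the count $j_k\geq k$ would fail. This monotonicity is already part of \Cref{lem: ODE to PDE}: it comes from the uniform gap (\ref{eq: eigenvalue gap ODE}), $\lambdaODE_{i+1}-\lambdaODE_i\geq (a_{i+1}-a_i)/2$. That gap exceeds $2C\delta^{1/3}$ once $\varepsilon$ is small, so distinct $i$ cannot share an index. If one prefers not to rely on that bookkeeping, a direct alternative is min-max. The functions $\uGuess_1,\dots,\uGuess_k$ are almost orthonormal in $\tilde{L}^2_\circ(\tildeOmega)$, with inner products $\langle \tilde h_i,\tilde h_j\rangle_{\tilde L^2}=\delta_{ij}$ exactly, since $\|\tilde f_1^{(x)}\|_{L^2(B_1)}=1$. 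Each satisfies the residual bound $O(\delta^{1/3})$, so the Rayleigh quotient on their span is at most $\lambdaODE_k+O(\delta^{1/3})$, giving the same conclusion.
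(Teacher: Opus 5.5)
Your argument is correct and is exactly the paper's proof: the strict monotonicity $j_1<\dots<j_k$ from \Cref{lem: ODE to PDE} gives $j_k\geq k$, hence $\lambdao_k\leq\lambdao_{j_k}\leq\lambdaODE_k+O(\delta^{1/3})$. Your min-max alternative is also valid, since the $\uGuess_i$ are in fact exactly orthonormal in $\tilde{L}^2_\circ(\tildeOmega)$, but it is not needed.
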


\begin{proof}
Using the notation from \Cref{lem: ODE to PDE}, note $k \leq j_k$ since $j_1 < \dots < j_k$, and thus $\lambdao_k \leq \lambdao_{j_k}$. So the desired upper bound is immediate from \Cref{lem: ODE to PDE}.
\end{proof}

On its own, the bound in \Cref{lem: ODE to PDE} is too weak to deduce \Cref{prop: Separation of variables}(i). Indeed, a priori it could be the case that several eigenvalues of $-\tildeDeltao$ accumulate around each eigenvalue $\lambdaODE_i$ of the model ODE (\ref{eq: Model ODE}). To exclude this behaviour, we also need to show that every eigenfunction $\uo_j$ of $-\tildeDeltao$ induces an approximate eigenfunction $\hGuess_j$ of the model ODE (\ref{eq: Model ODE}), and that the $\hGuess_j$ ($j \in \bbN$) are almost orthogonal. \Cref{prop: Separation of variables} will then follow from \Cref{lem: Guessing eigenvectors}.

We define $\hGuess_j$ for all $j \in \bbN$ by
\begin{equation}\label{eq: h Guess}
	\hGuess_j \, \colon \big(0,\delta^{-1/3}(t_0+\tau_0) \big) \to \bbR, \,
	x \mapsto 
	\left\langle \uo_j(x, \cdot), \tilde{f}_1^{(x)}(\cdot) \right\rangle_{L^2(B_1)}.
\end{equation}
We start with the following observation. In its formulation we continue to simply write $\jaco$ and $\mu_1$ instead of $\jaco \big(t_0-\delta^{1/3}x \big)$ and $\mu_1\big(t_0-\delta^{1/3}x \big)$ for ease of readability. We also recall that $\tilde{L}^2\big(0,\delta^{-1/3}(t_0+\tau_0)\big)$ denotes the weighted $L^2$-space defined in (\ref{eq: def weighted L^2 norm x}).

\begin{lem}\label{lem: PDE to ODE 1}
For every $k \in \bbN$ there exists a constant $\varepsilon_0 > 0$ with the following property.
For all $\varepsilon \in (0, \varepsilon_0]$ and $j=1,\dots,k$, we have
\[
	\left|\left|\left(-\frac{1}{\jaco}\frac{d}{dx}\left(\jaco \frac{d}{dx}\right)+\delta^{2/3}\varepsilon^{-2}\mu_1 - \lambdao_j \right)\hGuess_j \right|\right|_{\tilde{L}^2(0,\delta^{-1/3}(t_0+\tau_0))}=O\big(\delta^{1/6}\big).
\]
\end{lem}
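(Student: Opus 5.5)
We compute the ODE operator applied to $\hGuess_j$ by differentiating under the integral in $y$. We then use the PDE $-\tildeDeltao \uo_j=\lambdao_j\uo_j$ together with the self-adjointness of $-\Div_y(\Go^{-1}\nabla_y)$ on $H_0^1(B_1)$.

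\textbf{Step 1: the main identity.} Write $L_x\coloneq -\Div_y(\Go^{-1}(t_0-\delta^{1/3}x)\nabla_y)$, so that $L_x\tilde f_1^{(x)}=\mu_1\tilde f_1^{(x)}$. Pairing the eigenvalue equation with $\tilde f_1^{(x)}$ in $L^2(B_1)$ and integrating by parts in $y$ gives
\[
	\big\langle \delta^{2/3}\varepsilon^{-2}L_x\uo_j,\tilde f_1^{(x)}\big\rangle_{L^2(B_1)}=\delta^{2/3}\varepsilon^{-2}\mu_1\hGuess_j .
\]
The integration by parts is legitimate because both functions vanish on $\partial B_1$. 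Next,
\[
	\frac{d}{dx}\hGuess_j=\langle\partial_x\uo_j,\tilde f_1^{(x)}\rangle+\langle\uo_j,\partial_x\tilde f_1^{(x)}\rangle .
\]
Differentiating once more shows that $\frac{1}{\jaco}\frac{d}{dx}(\jaco\frac{d}{dx}\hGuess_j)$ equals $\langle\frac{1}{\jaco}\partial_x(\jaco\partial_x\uo_j),\tilde f_1^{(x)}\rangle$ plus the error terms
\[
	E\coloneq 2\langle\partial_x\uo_j,\partial_x\tilde f_1^{(x)}\rangle+\langle\uo_j,\partial^2_{xx}\tilde f_1^{(x)}\rangle+\frac{\jaco'}{\jaco}\langle\uo_j,\partial_x\tilde f_1^{(x)}\rangle .
\]
Combining these, the expression in the lemma is exactly $-E$, and it remains to estimate $\|E\|_{\tilde L^2}$.

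\textbf{Step 2: bounds for the terms without $\partial_x\uo_j$.} By (\ref{eq: bounds for f_1}), Cauchy--Schwarz in $y$, the bound $\jaco'/\jaco=O(\delta^{1/3})$ (the $x$-derivative), and $\|\uo_j\|_{\tilde L^2_\circ}=1$, the second and third terms of $E$ contribute $O(\delta^{1/3})$ in $\tilde L^2$. The weights $\jaco(t_0-\delta^{1/3}x)/\jaco(t_0)$ are bounded above and below uniformly on the interval.

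\textbf{Step 3: the term with $\partial_x\uo_j$.} This is the only delicate term. Cauchy--Schwarz gives
\[
	\|2\langle\partial_x\uo_j,\partial_x\tilde f_1^{(x)}\rangle\|_{\tilde L^2}\le C\delta^{1/3}\|\partial_x\uo_j\|_{L^2(\tildeOmega)},
\]
so we need $\|\partial_x\uo_j\|_{L^2(\tildeOmega)}=O(\delta^{-1/6})$. To get it, test the eigenvalue equation against $\uo_j$ in $\tilde L^2_\circ$:
\[
	\|\partial_x\uo_j\|^2_{\tilde L^2_\circ}+\delta^{2/3}\varepsilon^{-2}\langle L_x\uo_j,\uo_j\rangle=\lambdao_j .
\]
The transversal term is at least $\delta^{2/3}\varepsilon^{-2}\min_t\mu_1(t)$, and $\min_t\mu_1(t)$ is attained at $t=t_0$ by Lemma \Cref{lem: Monotonicity of mu_1} and (\ref{eq: def tau}). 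On the other side, Corollary \Cref{cor: upper bound lambda(Delta_0)} and Proposition \Cref{prop: Model ODE and Airy}(i) give $\lambdao_j\le\delta^{2/3}\varepsilon^{-2}\mu_1(t_0)+a_j+O(\delta^{1/3})$. The large constants $\delta^{2/3}\varepsilon^{-2}\mu_1(t_0)\sim\delta^{-1/3}$ therefore cancel, and $\|\partial_x\uo_j\|^2\le a_j+1$ is bounded, which is better than needed. Plugging this in yields $O(\delta^{1/3})$, within the claimed $O(\delta^{1/6})$.

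The main obstacle is making this cancellation of the $\delta^{-1/3}$-size terms rigorous, since the min over $t$ must be taken pointwise in $x$ inside the integral. If only a cruder bound such as $\|\partial_x\uo_j\|^2\lesssim\delta^{-1/3}$ came out, we would still get $O(\delta^{1/3}\cdot\delta^{-1/6})=O(\delta^{1/6})$, which matches the stated rate. Finally, we check that $\hGuess_j$ satisfies the Dirichlet conditions and lies in $H^2$, which follows from Remark \Cref{rem: H^2 up to boundary}.
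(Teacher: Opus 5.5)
Your proposal is correct and follows the paper's proof. It uses the same product-rule identity for $\frac{1}{\jaco}\frac{d}{dx}\big(\jaco\frac{d}{dx}\hGuess_j\big)$, the same use of $-\tildeDeltao\uo_j=\lambdao_j\uo_j$ with self-adjointness of $-\Div_y(\Go^{-1}\nabla_y)$, and the same reduction to an $L^2$-bound on $\partial_x\uo_j$ by testing the eigenvalue equation. The only difference is that the paper drops the transversal term and uses $\lambdao_j=O(\delta^{-1/3})$ to get $\|\partial_x\uo_j\|_{\tilde{L}^2_\circ(\tildeOmega)}=O(\delta^{-1/6})$, whereas you keep it and use $\mu_1(t_0-\delta^{1/3}x)\geq\mu_1(t_0)$ slice by slice. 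This is the same cancellation the paper performs in the proof of \Cref{lem: PDE to ODE 2}, so your ``main obstacle'' is not really one. It gives $\|\partial_x\uo_j\|^2_{\tilde{L}^2_\circ(\tildeOmega)}\leq a_j+O(\delta^{1/3})$ and hence the improved rate $O(\delta^{1/3})$ the paper mentions after the statement.
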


With a bit more work, it is possible to improve the upper bound to $O\big(\delta^{1/3}\big)$. Since this is not needed for our purposes, we refrain from doing so.

To deduce that $\hGuess_j$ is an almost-eigenfunction of (\ref{eq: Model ODE}), we also need that $||\hGuess_j||_{\tilde{L}^2}$ is uniformly bounded from below. This will be shown in \Cref{lem: PDE to ODE 2} below.

\begin{proof}
Using the definition (\ref{eq: h Guess}) of $\hGuess_j$, a straightforward computation yields
\begin{align*}
	\frac{1}{\jaco}\frac{d}{dx}\left(\jaco \frac{d}{dx}\hGuess_j\right)(x)
	=& \left\langle \frac{1}{\jaco}\partial_x\big(\jaco \partial_x\uo_j\big)(x,\cdot), \tilde{f}_1^{(x)}(\cdot) \right\rangle_{L^2(B_1)} \\
	&+2\left\langle \partial_x \uo_j(x, \cdot), \partial_x\tilde{f}_1^{(x)}(\cdot) \right\rangle_{L^2(B_1)} \\
	&+\frac{1}{\jaco}\frac{d \jaco}{dx}\left\langle \uo_j(x, \cdot), \partial_x\tilde{f}_1^{(x)}(\cdot) \right\rangle_{L^2(B_1)} \\
	&+ \left\langle \uo_j(x, \cdot), \partial^2_{xx}\tilde{f}_1^{(x)}(\cdot) \right\rangle_{L^2(B_1)}.
\end{align*}
Keeping in mind $-\tildeDeltao \uo_j = \lambdao_j \uo_j$ and the formula (\ref{eq: def tilde(Laplace)_0}) for $-\tildeDeltao$, we can simplify the first summand on the right hand side to
\begin{align*}
	\left\langle \frac{1}{\jaco}\partial_x\big(\jaco \partial_x\uo_j\big)(x,\cdot), \tilde{f}_1^{(x)}(\cdot) \right\rangle_{L^2(B_1)}
	=& \left\langle -\lambdao_j\uo_j(x, \cdot), \tilde{f}_1^{(x)}(\cdot) \right\rangle_{L^2(B_1)} \\
	+&\left\langle -\delta^{2/3}\varepsilon^{-2}\Div_y\left(\Go^{-1}\nabla_y \uo_j (x, \cdot)\right), \tilde{f}_1^{(x)}(\cdot) \right\rangle_{L^2(B_1)} \\
	=& -\lambdao_j \hGuess_j \\
	+&\delta^{2/3}\varepsilon^{-2}\left\langle  \uo_j(x, \cdot), -\Div_y\left(\Go^{-1}\nabla_y\tilde{f}_1^{(x)}(\cdot) \right)\right\rangle_{L^2(B_1)} \\
	=& -\lambdao_j \hGuess_j + \delta^{2/3}\varepsilon^{-2}\left\langle \uo_j(x, \cdot), \mu_1 \tilde{f}_1^{(x)}(\cdot) \right\rangle_{L^2(B_1)} \\
	=& \big(-\lambdao_j+\delta^{2/3}\varepsilon^{-2}\mu_1 \big) \hGuess_j.
\end{align*}
Together with $\left|\frac{d \jaco}{dx} \right|=O\big(\delta^{1/3}\big)$ and (\ref{eq: bounds for f_1}), we obtain
\begin{align*}
	\bigg|\bigg(-\frac{1}{\jaco}\frac{d}{dx}\left(\jaco \frac{d}{dx}\right)&+\delta^{2/3}\varepsilon^{-2}\mu_1 - \lambdao_j \bigg)\hGuess_j \bigg|(x) \\
	& \leq C\delta^{1/3}||\partial_x \uo_j(x, \cdot)||_{L^2(B_1)}
	+ C\delta^{2/3}||\uo_j(x, \cdot)||_{L^2(B_1)}.
\end{align*}
This implies the desired estimate if we can show
\begin{equation}\label{eq: L^2 bound partial_x tilde(u)_0 - weak}
	||\partial_x \uo_j||_{\tilde{L}^2_\circ(\tildeOmega)}=O\big(\delta^{-1/6}\big).
\end{equation}
To see this, test $-\tildeDeltao \uo_j=\lambdao_j \uo_j$ against $\frac{\jaco}{\jaco(t_0)}\uo_j \in H_0^1(\tilde{\Omega})$ to obtain
\[
	\int_{\tildeOmega}\frac{\jaco}{\jaco(t_0)}\left(|\partial_x \uo_j|^2+ 
	\delta^{2/3}\varepsilon^{-2}\left\langle \Go^{-1}\nabla_y \uo_j, \nabla_y \uo_j \right\rangle_{\bbR^{n-1}}\right) \, dx dy = \lambdao_j \int_{\tildeOmega}\frac{\jaco}{\jaco(t_0)} |\uo_j|^2 \, dx dy = \lambdao_j.
\]
Moreover, invoking \Cref{cor: upper bound lambda(Delta_0)}, \Cref{prop: Model ODE and Airy}(i), and the definition (\ref{eq: def delta}) of $\delta$, we can bound
\[
	\lambdao_j  \leq \lambdaODE_j + O\big(\delta^{1/3}\big)=\delta^{2/3}\varepsilon^{-2}\mu_1(t_0)+a_j+O\big(\delta^{1/3}\big)=O\big(\delta^{-1/3}\big).
\]
This establishes (\ref{eq: L^2 bound partial_x tilde(u)_0 - weak}), and thus completes the proof.
\end{proof}

Next we show that the $(\hGuess_j)_{j \in \bbN}$ are almost-orthonormal. This reflects the fact that "most of" $\uo_j(x, \cdot)$ projects onto $\tilde{f}_1^{(x)}$ in $L^2(B_1)$.

\begin{lem}\label{lem: PDE to ODE 2}
For every $k \in \bbN$ there exists a constant $\varepsilon_0 > 0$ with the following property.
For all $\varepsilon \in (0, \varepsilon_0]$ and $i,j \in \{1,\dots,k\}$, we have
\[
	\left|\left\langle \hGuess_i, \hGuess_j \right\rangle_{\tilde{L}^2}-\delta_{ij} \right|=O\big( \delta^{1/3} \big),
\]
where $\langle \cdot, \cdot \rangle_{\tilde{L}^2}$ is the weighted $L^2$-inner product defined in (\ref{eq: def weighted L^2 norm x}).
\end{lem}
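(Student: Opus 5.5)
The plan is to decompose, for each fixed $x$, the slice $\uo_j(x,\cdot)$ in $L^2(B_1)$ as
\[
	\uo_j(x,\cdot)=\hGuess_j(x)\tilde{f}_1^{(x)}+w_j(x,\cdot),
	\qquad w_j(x,\cdot)\perp \tilde{f}_1^{(x)} \text{ in } L^2(B_1).
\]
Since $\jaco$ only depends on $x$, orthonormality of the $\uo_j$ in $\tilde{L}^2_\circ(\tildeOmega)$ gives
\[
	\delta_{ij}=\langle \uo_i,\uo_j\rangle_{\tilde{L}^2_\circ(\tildeOmega)}=\langle \hGuess_i,\hGuess_j\rangle_{\tilde{L}^2}+\langle w_i,w_j\rangle_{\tilde{L}^2_\circ(\tildeOmega)}.
\]
The cross terms vanish by pointwise orthogonality in $y$. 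By Cauchy--Schwarz it therefore suffices to show
\[
	||w_j||^2_{\tilde{L}^2_\circ(\tildeOmega)}=O(\delta^{1/3})
\]
for $j\le k$.

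The key input is the spectral gap in the cross-section. Let $\mu_2(t)$ be the second Dirichlet eigenvalue of $-\Div(\Go^{-1}(t)\nabla)$ on $B_1$. By compactness of $[-\tau_0,t_0]$ and simplicity of $\mu_1$, we have $\mu_2(t)-\mu_1(t)\ge c_0>0$ uniformly in $t$. Since $w_j(x,\cdot)\perp\tilde f_1^{(x)}$, this gives
\[
	\int_{B_1}\langle\Go^{-1}\nabla_y\uo_j,\nabla_y\uo_j\rangle\,dy\ \ge\ \mu_1|\hGuess_j|^2+(\mu_1+c_0)||w_j(x,\cdot)||^2_{L^2(B_1)}.
\]
Here the cross term vanishes because $\tilde f_1^{(x)}$ is an eigenfunction, so $\tilde f_1^{(x)}$ and $w_j$ are orthogonal in the energy form as well.

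Next I insert this into the energy identity from the proof of \Cref{lem: PDE to ODE 1}:
\[
	\lambdao_j=\int\frac{\jaco}{\jaco(t_0)}\Big(|\partial_x\uo_j|^2+\delta^{2/3}\varepsilon^{-2}\langle\Go^{-1}\nabla_y\uo_j,\nabla_y\uo_j\rangle\Big).
\]
This gives
\[
	\lambdao_j\ \ge\ \int \frac{\jaco}{\jaco(t_0)}\delta^{2/3}\varepsilon^{-2}\mu_1|\uo_j|^2 + c_0\delta^{2/3}\varepsilon^{-2}||w_j||^2_{\tilde{L}^2_\circ}.
\]
Since $\delta^{2/3}\varepsilon^{-2}=c\,\delta^{-1/3}$ and $\mu_1(t)\ge\mu_1(t_0)$ on $[-\tau_0,t_0]$ (by \Cref{lem: Monotonicity of mu_1} together with (\ref{eq: def tau})), the first term is at least $\delta^{2/3}\varepsilon^{-2}\mu_1(t_0)$. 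Combining with the upper bound $\lambdao_j\le \delta^{2/3}\varepsilon^{-2}\mu_1(t_0)+a_j+O(\delta^{1/3})$ from \Cref{cor: upper bound lambda(Delta_0)} and \Cref{prop: Model ODE and Airy}(i), we obtain
\[
	c_0\,\delta^{-1/3}\,||w_j||^2_{\tilde{L}^2_\circ}\le a_j+O(\delta^{1/3}),
\]
that is, $||w_j||^2=O(\delta^{1/3})$, as required.

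The main point to get right is the lower bound $\mu_1(t_0-\delta^{1/3}x)\ge\mu_1(t_0)$ on the whole interval. Without it, one cannot discard the $\mu_1$-term at the $O(1)$ level, because the huge factor $\delta^{-1/3}$ multiplies it. Monotonicity of $\mu_1$ on $(0,t_0]$, its symmetric counterpart on $[-\tau_0,0)$, and the choice (\ref{eq: def tau}) of $\tau_0$ ensure it. I would also double-check that the uniform gap $c_0$ is legitimate, which follows from continuity of $\mu_1,\mu_2$ in $t$. Once $||w_j||^2=O(\delta^{1/3})$ is established, the conclusion follows as
\[
	|\langle\hGuess_i,\hGuess_j\rangle-\delta_{ij}|\le ||w_i||\,||w_j||=O(\delta^{1/3}).
\]
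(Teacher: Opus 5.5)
Your proposal is correct and follows the paper's proof essentially step for step. You use the same fiberwise splitting of $\uo_j(x,\cdot)$ along $\tilde f_1^{(x)}$, the energy identity combined with the uniform cross-sectional gap $\mu_2-\mu_1\ge c_0$, the bound $\mu_1(t)\ge\mu_1(t_0)$ from monotonicity and the choice of $\tau_0$, and the upper bound on $\lambdao_j$ from the corollary and the model-ODE proposition. Your closing Cauchy--Schwarz step simply makes explicit what the paper leaves implicit.
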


The idea is as follows. If this were not true, i.e., if a substantial part of $\uo_j(x,\cdot)$ were to project onto $\big(\tilde{f}_1^{(x)}\big)^\perp$, then, due to the expression (\ref{eq: def tilde(Laplace)_0}) for $\tildeDeltao$, the eigenvalue $\lambdao_j$ would have to be significantly larger than $\delta^{2/3}\varepsilon^{-2}\mu_1(t_0)$. However, this would contradict \Cref{cor: upper bound lambda(Delta_0)} and \Cref{prop: Model ODE and Airy}(i).

\begin{proof}
Decompose $\uo_j=(\uo_j)^\top+(\uo_j)^\perp$, where $(\uo_j)^\top,(\uo_j)^\perp$ are defined such that, for all $x \in [0,\delta^{-1/3}(t_0+\tau_0)]$, we have
\[
	\uo_j(x,\cdot)=(\uo_j)^\top(x, \cdot)+(\uo_j)^\perp(x, \cdot) \in {\rm span}\left\{ \tilde{f}_1^{(x)} \right\}+{\rm span}\left\{ \tilde{f}_1^{(x)} \right\}^\perp = L^2(B_1).
\]
Then 
\[
	\left\langle \hGuess_i, \hGuess_j \right\rangle_{\tilde{L}^2(0,\delta^{-1/3}(t_0+\tau_0))}
	= \left\langle (\uo_i)^\top, (\uo_j)^\top \right\rangle_{\tilde{L}_\circ^2(\tildeOmega)}.
\]
Since
\[
	\delta_{ij}=\left\langle \uo_i, \uo_j\right\rangle_{\tilde{L}_\circ^2(\tildeOmega)}
	=\left\langle (\uo_i)^\top, (\uo_j)^\top \right\rangle_{\tilde{L}_\circ^2(\tildeOmega)}
	+\left\langle (\uo_i)^\perp, (\uo_j)^\perp \right\rangle_{\tilde{L}_\circ^2(\tildeOmega)},
\]
it therefore suffices to show that, for all $j=1,\dots,k$, we have
\begin{equation}\label{eq: normal part of tilde(u)_0 is small}
	\big|\big|(\uo_j)^\perp \big|\big|_{\tilde{L}_\circ^2(\tildeOmega)}^2=O\big(\delta^{1/3}\big).
\end{equation}
Towards (\ref{eq: normal part of tilde(u)_0 is small}), we again test $\lambdao_j \uo_j=-\tildeDeltao \uo_j$ against $\frac{\jaco}{\jaco(t_0)}\uo_j$ and remember $||\uo_j||_{\tilde{L}_\circ^2(\tildeOmega)}=1$ to deduce
\begin{align}\label{eq: normal part is small 1}
	\lambdao_j =& \int_{\tildeOmega}\frac{\jaco}{\jaco(t_0)}\left(|\partial_x \uo_j|^2+ 
	\delta^{2/3}\varepsilon^{-2}\left\langle \Go^{-1}\nabla_y \uo_j, \nabla_y \uo_j \right\rangle_{\bbR^{n-1}}\right) \, dx dy \notag \\
	\geq & \int_{0}^{(t_0+\tau_0)\delta^{-1/3}}\frac{\jaco}{\jaco(t_0)} 
	\delta^{2/3}\varepsilon^{-2}\left\langle \Go^{-1}\nabla_y \uo_j (x,\cdot), \nabla_y \uo_j (x,\cdot)\right\rangle_{L^2(B_1)} \, dx.
\end{align}
Denote by $\mu_2(t)$ the second Dirichlet eigenvalue of $-\Div\left(\Go^{-1}(t)\nabla \right)$ in $B_1(0) \subseteq \bbR^{n-1}$. Then 
\[
	\left\langle \Go^{-1}\nabla_y \uo_j (x,\cdot), \nabla_y \uo_j (x,\cdot)\right\rangle_{L^2(B_1)} \geq  \mu_1 ||(\uo_j)^\top(x,\cdot)||_{L^2(B_1)}^2 + \mu_2 ||(\uo_j)^\perp(x,\cdot)||_{L^2(B_1)}^2.
\]
There exists $\mu_{\rm gap}=\mu_{\rm gap}(t_0,\tau_0) > 0$ such that $\mu_2(t) \geq \mu_1(t)+\mu_{\rm gap} $ for all $t \in [-\tau_0,t_0]$. Indeed, this follows from the fact that $\mu_1(t)$ is simple, and by the compactness of $[-\tau_0,t_0]$.
Thus
\begin{equation*}\label{eq: normal part is small 2}
	\left\langle \Go^{-1}\nabla_y \uo_j (x,\cdot), \nabla_y \uo_j (x,\cdot)\right\rangle_{L^2(B_1)} \geq  \mu_1 ||(\uo_j)(x,\cdot)||_{L^2(B_1)}^2 + \mu_{\rm gap} ||(\uo_j)^\perp(x,\cdot)||_{L^2(B_1)}^2.
\end{equation*}
We know $\mu_1(t) \geq \mu_1(t_0)$ for all $t \in [-\tau_0,t_0]$ due to \Cref{lem: Monotonicity of mu_1} and the choice (\ref{eq: def tau}) of $\tau_0$. Plugging this into (\ref{eq: normal part is small 1}), and keeping in mind $||\uo_j||_{\tilde{L}_\circ^2(\tildeOmega)}=1$, yields
\begin{equation}\label{eq: normal part is small 3}
	\lambdao_j \geq 
	\delta^{2/3}\varepsilon^{-2}\mu_1(t_0)
	+\delta^{2/3}\varepsilon^{-2}\mu_{\rm gap}\left|\left|(\uo_j)^\perp \right|\right|_{\tilde{L}_\circ^2(\tildeOmega)}^2.
\end{equation}
However, we also know
\begin{equation}\label{eq: normal part is small 4}
	\lambdao_j \leq \lambdaODE_j + O\big(\delta^{1/3}\big)=\delta^{2/3}\varepsilon^{-2}\mu_1(t_0)+a_j +  O\big(\delta^{1/3}\big)
\end{equation}
thanks to \Cref{cor: upper bound lambda(Delta_0)} and \Cref{prop: Model ODE and Airy}(i). Finally, we have $\delta^{-2/3}\varepsilon^{2}= O\big(\delta^{1/3}\big)$ due to the definition (\ref{eq: def delta}) of $\delta$. Therefore, combining (\ref{eq: normal part is small 3}) and (\ref{eq: normal part is small 4}) establishes (\ref{eq: normal part of tilde(u)_0 is small}). This completes the proof.
\end{proof}

With these lemmas at hand, we can now easily deduce \Cref{prop: Separation of variables}.

\begin{proof}[Proof of \Cref{prop: Separation of variables}]
Thanks to \Cref{lem: PDE to ODE 1} and \Cref{lem: PDE to ODE 2} we can apply \Cref{lem: Guessing eigenvectors} to the differential operator $-\frac{1}{\jaco}\frac{d}{dx}\left(\jaco \frac{d}{dx} \right)+\delta^{2/3}\varepsilon^{-2}\mu_1$ from the model ODE (\ref{eq: Model ODE}) with $\varepsilon_{\rm Guess}=O\big( \delta^{1/6}\big)$. Therefore, for all $j=1,\dots,k$ there exists $i_j \in \bbN$ such that 
\[
	\left|\lambdao_j - \lambdaODE_{i_j}\right|=O\big( \delta^{1/6}\big)
	\quad \text{and} \quad
	\left|\left|\hGuess_j - \tilde{h}_{i_j} \right|\right|_{\tilde{L}^2}=O\big( \delta^{1/6}\big),
\]
where for the second estimate we also used that, for $\varepsilon > 0$ small enough, the eigenvalue gaps $\lambdaODE_{i+1}-\lambdaODE_i$ are uniformly bounded from below thanks to (\ref{eq: eigenvalue gap ODE}).

We claim that $i_j=j$ for all $j=1,\dots,k$. Indeed, from \Cref{cor: upper bound lambda(Delta_0)} we know
\[
	\lambdaODE_{i_j} = \lambdao_j + O\big( \delta^{1/6}\big)
	\leq \lambdaODE_j +O\big( \delta^{1/6}\big),
\]
and thus $i_j \leq j$ by (\ref{eq: eigenvalue gap ODE}). Moreover, we also know $||\hGuess_j-\hGuess_{j^\prime}||_{\tilde{L}^2}^2=2+O(\delta^{1/3})$ for all $j \neq j^\prime$ due to \Cref{lem: PDE to ODE 2}. This implies that $j \mapsto i_j$ ($j=1,\dots,k$) is injective. Combining these two observations shows $i_j=j$ for all $j=1,\dots,k$, as claimed.

Until now we have shown $|\lambdao_j - \lambdaODE_{j}|=O\big( \delta^{1/6}\big)$ for all $j=1,\dots,k$. Together with the eigenvalue gap (\ref{eq: eigenvalue gap ODE}), and using the notation from \Cref{lem: ODE to PDE}, this implies $j_i=i$.  Thus,  applying \Cref{lem: Guessing eigenvectors} to \Cref{lem: ODE to PDE} once more, we obtain, for all $i=1,\dots,k$,
\[
	\left|\lambdaODE_i - \lambdao_{i}\right|=O\big( \delta^{1/3}\big)
	\quad \text{and} \quad
	\left|\left|\uGuess_i - \uo_i \right|\right|_{\tilde{L}_\circ^2(\tildeOmega)}=O\big( \delta^{1/3}\big),
\]
where we used that, as a consequence of  $|\lambdao_j - \lambdaODE_{j}|=O\big( \delta^{1/6}\big)$ and (\ref{eq: eigenvalue gap ODE}), the eigenvalue gaps $\lambdao_{i+1}-\lambdao_i$ of $-\tildeDeltao$ are also uniformly bounded from below for all $\varepsilon > 0$ small enough. This completes the proof.
\end{proof}

\section{Eigenfunctions of \(\tildeDelta\)}\label{sec: Laplace}

We now finally turn our attention to the eigenfunctions of $\tildeDelta$. Namely, we will show in \Cref{subsec: from Laplace_0 to Laplace} that the eigenfunctions of $\tildeDelta$ are small perturbations of the eigenfunctions of $\tildeDeltao$. Together with the results of \Cref{sec: Laplace_0}, we will then be able to prove \Cref{Main Theorem}. This will be carried out in \Cref{subsec: Proof of Main Thm}.

\subsection{Approximation of \(\tildeDelta\) by \(\tildeDeltao\)}\label{subsec: from Laplace_0 to Laplace}

We denote by $(\tilde{u}_i)_{i \in \bbN} \subseteq \tilde{L}^2(\tildeOmega)$ a complete orthonormal basis of eigenfunctions of $-\tildeDelta$ in $\tildeOmega$ with Dirichlet boundary data, and by $\tilde{\lambda}_1 \leq \tilde{\lambda}_2 \leq \dots$ its eigenvalues. Here $\tilde{L}^2(\tildeOmega)$ denotes the weighted $L^2$-space defined in (\ref{eq: def weighted L^2 norm xy}). We recall from \Cref{subsec: Separation of variables} that $(\uo_i)_{i \in \bbN} \subseteq \tilde{L}^2_\circ(\tildeOmega)$ and $(\lambdao_i)_{i \in \bbN}$ denote the Dirichlet eigenfunctions and eigenvalues of $-\tildeDeltao$ in $\tildeOmega$.

The following is the main result of this subsection.

\begin{prop}\label{prop: eigfct of Delta}
For all $t_0,\tau_0 > 0$ satisfying (\ref{eq: def tau}) and $k \in \bbN$ there exist $C > 0$ and $\varepsilon_0 > 0$ such that for all $\varepsilon \in (0,\varepsilon_0]$ we have the following estimates:
\begin{enumerate}[(i)]
	\item (Eigenvalue)
	\[
		\left|\tilde{\lambda}_k - \lambdao_k \right| \leq C \delta^{1/6}
	\]
	\item (Eigenfunction)
	\[	
		\left|\left|\tilde{u}_k - \uo_k \right|\right|_{L^2(\tildeOmega)} \leq C \delta^{1/12}
	\]
\end{enumerate}
\end{prop}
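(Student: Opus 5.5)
We will apply \Cref{lem: perturbation} with $\Vo=\tilde{L}^2_\circ(\tildeOmega)$, $\mathcal{V}=\tilde{L}^2(\tildeOmega)$, $\Lo=-\tildeDeltao$ and $\mathcal{L}=-\tildeDelta$, both with domain $H^2(\tildeOmega)\cap H^1_0(\tildeOmega)$ (by \Cref{rem: H^2 up to boundary}). The proof has three steps.

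\textbf{Step 1: comparison of the inner products.} We have $\jac(\varepsilon y,t)=\jaco(t)+O(\varepsilon)$ uniformly on the region, because $|y|\le 1$ and $g$ agrees with $g_\circ$ at $s=0$. Also $\jaco$ is bounded below on $[-\tau_0,t_0]$. Hence (\ref{eq: inner products almost equivalent}) holds with $\varepsilon_{\rm comp}=O(\varepsilon)=O(\delta^{1/2})$, by (\ref{eq: def delta}). The contribution $3\varepsilon_{\rm comp}\lambdao_k$ in (\ref{eq: eigenvalue perturbation}) is a problem, since $\lambdao_k=O(\delta^{-1/3})$. It is still only $O(\delta^{1/6})$, which is where the exponent $1/6$ comes from.

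\textbf{Step 2: the quadratic form of $E=\tildeDeltao-\tildeDelta$.} We estimate $\langle Ev,v\rangle$ for $v$ in $S_k^\circ$ and in $S_k$. Rather than using the operators, we will compare the Dirichlet forms. Write $\langle(\Lo-\mathcal{L})v,v\rangle$ as the difference between the $\tilde{L}^2_\circ$-weighted Dirichlet energy of $g_\circ$ and the $\tilde{L}^2$-weighted Dirichlet energy of $g$, plus a cross term coming from the change of inner product. In the rescaled coordinates:
\begin{itemize}
\item the $x$-part of the form differs by $O(\varepsilon)|\partial_x v|^2$;
\item the $y$-part is $\delta^{2/3}\varepsilon^{-2}\langle g^{ij}\partial_{y_i}v,\partial_{y_j}v\rangle$, where $g^{ij}(\varepsilon y,t)-g_\circ^{ij}(t)=O(\varepsilon)$;
\item the mixed terms are $g^{ti}=O(\varepsilon|y|)$, carrying a factor $\delta^{1/3}\varepsilon^{-1}$.
\end{itemize}
Each error is therefore $O(\varepsilon)$ times the full energy $\int|\partial_xv|^2+\delta^{2/3}\varepsilon^{-2}|\nabla_yv|^2$. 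For $v\in S_k^\circ$ or $S_k$ this energy is $O(\lambda_k)$, that is $O(\delta^{-1/3})$ (for $S_k$ after a first rough upper bound on $\tilde\lambda_k$). The mixed term is bounded by Cauchy–Schwarz in the same way.

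In total the max terms in (\ref{eq: eigenvalue perturbation}) are $O(\varepsilon\delta^{-1/3})=O(\delta^{1/6})$. The cross term involving $\langle\Lo v,v\rangle-\langle\Lo v,v\rangle_\circ$ is handled exactly as in the proof of \Cref{lem: perturbation}. The part of the argument we expect to be delicate is making sure that the form version of $\langle Ev,v\rangle$ matches the expression in \Cref{lem: perturbation}. Integrating by parts, with $v\in H^1_0$, converts $\langle -\tildeDelta v,v\rangle$ into the $\tilde{L}^2$ energy, so this should go through.

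\textbf{Step 3: conclusion.} Statement (i) follows from (\ref{eq: eigenvalue perturbation}). Apply (i) first with a crude a priori bound $\tilde\lambda_k=O(\delta^{-1/3})$, obtained from the upper inequality in (\ref{eq: eigenvalue perturbation}). For (ii) we use (\ref{eq: eigenvector perturbation}). The gap $\Gamma_k^\circ$ is bounded below uniformly, by \Cref{prop: Separation of variables}(i) together with (\ref{eq: eigenvalue gap ODE}). This gives $\|\tilde u_k-\uo_k\|^2=O(\delta^{1/6})$, hence $O(\delta^{1/12})$ in norm. Finally, the unweighted $L^2$ norm is equivalent to the weighted one, since the weights are bounded above and below.
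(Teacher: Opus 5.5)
Your proposal is correct and follows the paper's argument. It applies \Cref{lem: perturbation} with $\varepsilon_{\rm comp}=O(\varepsilon)=O(\delta^{1/2})$ in the same order: first the upper bound gives $\tilde\lambda_k\le\lambdao_k+O(\delta^{1/6})$, hence $\tilde\lambda_k=O(\delta^{-1/3})$; then the lower bound; and finally (\ref{eq: eigenvector perturbation}), with $\Gamma_k^\circ$ bounded below via \Cref{prop: Separation of variables}(i) and (\ref{eq: eigenvalue gap ODE}).

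The only variation is in how you estimate $\langle(\Lo-\mathcal{L})v,v\rangle$. You compare the two Dirichlet forms directly: each error is $O(\varepsilon)$ times the energy $\int|\partial_xv|^2+\delta^{2/3}\varepsilon^{-2}|\nabla_yv|^2=O(\delta^{-1/3})$, with the mixed term handled by AM--GM. The paper instead expands $\tildeDelta-\tildeDeltao$ in coordinates and integrates by parts (\Cref{lem: tilde(Delta)-tilde(Delta)_0}), using the separate bounds $\|\partial_x u\|=O(\delta^{-1/6})$ and $\|\nabla_y u\|=O(1)$. Both routes give the same $O(\delta^{1/6})$.
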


In particular, combining (\ref{eq: eigenvalue gap ODE}), \Cref{prop: Separation of variables}(i), and \Cref{prop: eigfct of Delta}(i), we obtain, for all $\varepsilon >0$ small enough,
\begin{equation}\label{eq: eigenvalue gap PDE}
	\tilde{\lambda}_{k+1}-\tilde{\lambda}_k = a_{k+1}-a_k + O\big(\delta^{1/6}\big),
\end{equation}
where $0 > -a_1 > -a_2 > \dots$ are the zeros of the Airy function $\Ai$. In particular, the first $k$ eigenvalues of $\tildeDelta$, and hence of $\Delta$, are all simple for all $\varepsilon$ small enough (depending on $k$).

We point out that in \Cref{prop: eigfct of Delta}(ii) one can replace to usual $L^2$-norm by either one of the weighted $L^2$-norms $||\cdot||_{\tilde{L}^2(\tildeOmega)}$ or $||\cdot||_{\tilde{L}_\circ^2(\tildeOmega)}$ because all these norms are uniformly equivalent (with a constant only depending on $t_0$, $\tau_0$, and the metric $g$).

\Cref{prop: eigfct of Delta} will be consequence of \Cref{lem: perturbation} since the coefficients of $\tildeDelta$ and $\tildeDeltao$ are very close. To make this precise, we need the following basic claim. In its formulation, $\nabla_y$ denotes the standard gradient in $\bbR^{n-1}$. We also remind the reader that, for all $i \in \bbN$, $\tilde{u}_i,\uo_i \in H^2(\tildeOmega) \cap H_0^1(\tildeOmega)$ due to \Cref{rem: H^2 up to boundary}.

\begin{lem}\label{lem: tilde(Delta)-tilde(Delta)_0}
For all $\tilde{u} \in H^2(\tildeOmega) \cap H_0^1(\tildeOmega)$ we have
\begin{align*}
	\left|\left \langle \left(\tildeDelta-\tildeDeltao\right)\tilde{u},\tilde{u} \right \rangle_{\tilde{L}^2(\tildeOmega)} \right| \leq &
	C \bigg[\delta^{2/3}\varepsilon^{-1}||\nabla_y u||^2_{\tilde{L}^2(\tildeOmega)} 
	+ \delta^{1/3}||\partial_x \tilde{u}||_{\tilde{L}^2(\tildeOmega)}||\nabla_y \tilde{u}||_{\tilde{L}^2(\tildeOmega)} \\
	&+
	\left(\delta^{1/3}||\partial_x u||_{\tilde{L}^2(\tildeOmega)}+\delta^{2/3}\varepsilon^{-1}||\nabla_y u||_{\tilde{L}^2(\tildeOmega)}\right) ||u||_{\tilde{L}^2(\tildeOmega)} \bigg].
\end{align*}
The same estimate holds with $\tilde{L}_\circ^2(\tildeOmega)$ instead of $\tilde{L}^2(\tildeOmega)$.
\end{lem}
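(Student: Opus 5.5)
Since both operators are in divergence form, I will not expand them; instead I compare the associated quadratic forms after one integration by parts. In the rescaled coordinates $(x,y)$, write $\tildeDelta = \frac{\jaco(t_0)}{\jac}\,\partial_a\big(\frac{\jac}{\jaco(t_0)} A^{ab}\partial_b\big)$. Here $\jac=\jac(\varepsilon y,t_0-\delta^{1/3}x)$, and $A$ is the rescaled inverse metric:
\[
	A^{xx}=1,\quad A^{xy_i}=-\delta^{1/3}\varepsilon^{-1}g^{ti},\quad A^{y_iy_j}=\delta^{2/3}\varepsilon^{-2}g^{ij},
\]
with the coefficients evaluated at $(\varepsilon y, t_0-\delta^{1/3}x)$. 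Since $\tilde u\in H_0^1$, integrating by parts with respect to the weight $\jac/\jaco(t_0)$ gives
\[
	-\langle \tildeDelta \tilde u,\tilde u\rangle_{\tilde L^2} = \int_{\tildeOmega}\tfrac{\jac}{\jaco(t_0)}A^{ab}\partial_a\tilde u\,\partial_b\tilde u\,dxdy.
\]
Doing the same for $\tildeDeltao$ with the weight $\jaco$ and $A_\circ$ (no cross terms, $A_\circ^{yy}=\delta^{2/3}\varepsilon^{-2}g_\circ^{ij}$) gives its form in $\tilde L^2_\circ$. The difficulty is that $\tildeDeltao$ has to be paired in $\tilde L^2$ rather than $\tilde L^2_\circ$. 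So I write $\langle\tildeDeltao\tilde u,\tilde u\rangle_{\tilde L^2}=\langle\tildeDeltao\tilde u,\rho\tilde u\rangle_{\tilde L^2_\circ}$ with $\rho\coloneq\jac/\jaco$, and integrate by parts against $\rho\tilde u$.

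This produces three kinds of terms. \emph{(a) Coefficient differences.} These are $\int(\jac/\jaco(t_0))(A-A_\circ)\nabla\tilde u\cdot\nabla\tilde u$. The $xx$ entries agree. By Taylor in $s=\varepsilon y$ with $|y|\le1$, we have $g^{ij}(\varepsilon y,t)-g^{ij}(0,t)=O(\varepsilon)$, so the $yy$ difference is $O(\delta^{2/3}\varepsilon^{-1})|\nabla_y\tilde u|^2$. Since $g^{ti}(0,t)=0$ (recorded after (\ref{eq: def metric g})), we get $g^{ti}=O(\varepsilon)$, and the cross term is $O(\delta^{1/3})|\partial_x\tilde u||\nabla_y\tilde u|$. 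Cauchy--Schwarz then yields the first two terms of the claim. \emph{(b) Terms where a derivative falls on $\rho$.} In the $\tildeDeltao$ pairing these have the form $\int(\jaco/\jaco(t_0))A_\circ^{ab}\partial_a\tilde u\,\tilde u\,\partial_b\rho$. Since $\jac(s,t)=\jaco(t)+O(|s|)$ and everything is smooth, $\partial_x\rho=O(\delta^{1/3})\cdot O(\varepsilon)$ and $\partial_{y}\rho=O(\varepsilon)$. The $x$-part is therefore at most $C\delta^{1/3}\|\partial_x\tilde u\|\|\tilde u\|$. The $y$-part is at most $C\delta^{2/3}\varepsilon^{-2}\cdot\varepsilon\,\|\nabla_y\tilde u\|\|\tilde u\|=C\delta^{2/3}\varepsilon^{-1}\|\nabla_y\tilde u\|\|\tilde u\|$, which gives the third group. \emph{(c) Weight differences.} The weights $\jac/\jaco(t_0)$ and $\jaco/\jaco(t_0)\cdot\rho$ coincide, so no $(\jac-\jaco)$ term multiplies the full energy. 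This is exactly why I use $\rho$: a bare weight difference $O(\varepsilon)\,\delta^{2/3}\varepsilon^{-2}|\nabla_y\tilde u|^2$ is still of the admissible form $\delta^{2/3}\varepsilon^{-1}\|\nabla_y\tilde u\|^2$, but the $\partial_x$ energy would only pick up $O(\varepsilon)\|\partial_x\tilde u\|^2$, which is not of the stated form.

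Finally, all weighted norms are uniformly equivalent to the unweighted one and to each other, since $\jac,\jaco$ are bounded above and below on the slightly enlarged chart. This lets me state each bound in $\tilde L^2$ norms (the $u$ in the statement should be read as $\tilde u$). For the $\tilde L^2_\circ$ version I repeat the argument with the roles swapped, pairing $\tildeDelta$ against $\rho^{-1}\tilde u$ in $\tilde L^2$; the same three categories appear with identical orders.

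The step I expect to be the main obstacle is keeping track of the scalings in (b). I need to check that every derivative of the $s$-dependent coefficients, $\jac$ and $g^{ij}$, costs a factor $\varepsilon$ that exactly cancels one factor of $\varepsilon^{-1}$ coming from $\partial_y=\varepsilon\partial_s$. A $y$-derivative landing on $\jac$ in $\tildeDelta$ contributes $\delta^{2/3}\varepsilon^{-2}\cdot\varepsilon$, which is fine. The mixed terms, where $\partial_x$ hits $g^{ti}$ or $\partial_y$ hits $g^{ti}$, give $\delta^{1/3}\varepsilon^{-1}\cdot O(\varepsilon)=O(\delta^{1/3})$ times $|\partial_x\tilde u||\tilde u|$ or $|\nabla_y\tilde u||\tilde u|$. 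Since $\delta^{1/3}\le\delta^{2/3}\varepsilon^{-1}$ (as $\delta\sim\varepsilon^2$), these are absorbed into the stated bound.
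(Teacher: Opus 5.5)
Your argument is correct, but it is organized differently from the paper's. The paper subtracts the operators before pairing. It expands $\tildeDelta-\tildeDeltao$ in non-divergence form, so the $\partial^2_{xx}$ parts cancel pointwise. The second-order remainder has small coefficients $\delta^{2/3}\varepsilon^{-2}(g^{ij}-g^{ij}_\circ)=O(\delta^{2/3}\varepsilon^{-1})$ and $\delta^{1/3}\varepsilon^{-1}g^{ti}=O(\delta^{1/3})$, and is integrated by parts once in $y$; a $y$-derivative landing on $(g^{ij}-g^{ij}_\circ)\jac$ or $g^{ti}\jac$ costs a factor $\varepsilon$. The first-order remainder $O(\delta^{1/3})\partial_x+O(\delta^{2/3}\varepsilon^{-1})\partial_{y}$ is bounded directly by Cauchy--Schwarz. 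Because the difference is formed pointwise, the weight mismatch you handle in (c) never arises there, and the $\tilde L^2_\circ$ version is the same computation with $\jaco$ in place of $\jac$.

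You instead compare the two Dirichlet forms, and that is what forces the device $\rho=\jac/\jaco$. In exchange, no derivative ever falls on the metric coefficients. The only lower-order terms come from $\nabla\rho$, which is $O(\varepsilon)$ in $y$ and $O(\delta^{1/3}\varepsilon)$ in $x$ because $\jac(0,t)=\jaco(t)$. You even get $\delta^{1/3}\varepsilon\|\partial_x\tilde u\|\,\|\tilde u\|$ where the paper has $\delta^{1/3}\|\partial_x\tilde u\|\,\|\tilde u\|$. In particular, the bookkeeping you anticipate in your last paragraph, with derivatives hitting $g^{ti}$ or $\jac$ inside the coefficients, does not occur in your version at all; it belongs to the paper's route.

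Two points are worth making explicit. First, the identifications $g^{ij}(0,t)=g^{ij}_\circ(t)$ and $g^{ti}(0,t)=0$ use that the metric is block-diagonal at $s=0$, so inversion respects the blocks. Second, the Gauss lemma for the normal exponential map of $\Sigma$ actually gives $g_{ti}\equiv 0$ on the whole chart. So $A^{xx}=1$ holds exactly, and the cross terms could be dropped from either proof. With only $g_{ti}(0,t)=0$ one would have $g^{tt}=1+O(\varepsilon^2)$, which produces an extra $\varepsilon^2\|\partial_x\tilde u\|^2$ term not of the stated form.
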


The proof is just a straightforward calculation. 

\begin{proof}
Using (\ref{eq: def Laplace}) and (\ref{eq: def jac and jac0}) we write
\begin{align*}
	\Delta =& \frac{1}{\jac}\bigg(\partial_t\Big(\jac \partial_t \Big)
	+\partial_{s_i}\Big(\jac g^{ij}\partial_{s_j}\Big)
	+ \partial_{t}\Big(\jac g^{ti}\partial_{s_i}\Big)
	+ \partial_{s_i}\Big(\jac g^{ti}\partial_{t}\Big) \bigg) \\
	=& \partial^2_{tt}+g^{ij}\partial^2_{s_i s_j}+2g^{ti}\partial^2_{s_i t}
	+	\frac{\partial_t \jac}{\jac}\partial_t + \frac{\partial_{s_i}\left(\jac g^{ij}\right)}{\jac}\partial_{s_j}+
	\frac{\partial_{t}\left(\jac g^{ti}\right)}{\jac}\partial_{s_i}+\frac{\partial_{s_i}\left(\jac g^{ti}\right)}{\jac}\partial_{t}.
\end{align*}
Together with the definition (\ref{eq: def Laplace_0}) of $\Delta_\circ$, this gives
\[
	\Delta-\Delta_\circ = 
	\left(g^{ij}-g_\circ^{ij}\right)\partial^2_{s_i s_j}+2g^{ti}\partial^2_{s_i t}+O(1)\partial_t + O(1)\partial_{s_i}.
\]
Recall the change of variables $x \coloneq \delta^{-1/3}(t_0-t)$ and $y \coloneq s/\varepsilon$ from (\ref{eq: change of variables}), so that $\partial_x = -\delta^{1/3}\partial_t$ and $\partial_{y_i}=\varepsilon \partial_{s_i}$. Thus, by the definition (\ref{eq: def tilde Laplacians}) of $\tildeDelta$ and $\tildeDeltao$,
\begin{align}\label{eq: tilde(Delta)-tilde(Delta)_0 1}
	\tildeDelta-\tildeDeltao =& \delta^{2/3}\Delta-\delta^{2/3}\Delta_\circ \notag \\
	=& \delta^{2/3}\varepsilon^{-2}\left(g^{ij}-g_\circ^{ij}\right)\partial^2_{y_iy_j}
	- 2\delta^{1/3}\varepsilon^{-1}g^{ti}\partial^2_{y_i x}+O\big(\delta^{1/3}\big)\partial_x+O\big(\delta^{2/3}\varepsilon^{-1} \big)\partial_{y_j},
\end{align}
where $g_{ij}$ continues to denote $g(\partial_{s_i},\partial_{s_j})$, and \emph{not} $g(\partial_{y_i},\partial_{y_j})$. 

Now, using integration by parts, we have
\begin{align*}
	\int_{\tildeOmega}\left(g^{ij}-g_\circ^{ij}\right)\partial^2_{y_iy_j}(\tilde{u})\tilde{u} \jac \, dx dy =& - \int_{\tildeOmega}\partial_{y_j}(\tilde{u})\partial_{y_i}\Big(\left(g^{ij}-g_\circ^{ij}\right) \tilde{u}\jac \Big) \, dx dy \\
	=& -\int_{\tildeOmega}\partial_{y_j}(\tilde{u})\partial_{y_i}(\tilde{u})\left(g^{ij}-g_\circ^{ij}\right)\jac \, dx dy  \\
	&- \int_{\tildeOmega}\partial_{y_j}(\tilde{u})\tilde{u}\partial_{y_i}\Big(\left(g^{ij}-g_\circ^{ij}\right)\jac \Big) \, dx dy.
\end{align*}
However, since $g_{ij}$ and $\jac$ are smooth for $(s,t) \in  B_\varepsilon(0) \times [-\tau_0,t_0] $, we have
\[
	\left(g^{ij}-g_\circ^{ij}\right)=O(\varepsilon)
	\quad \text{and} \quad
	\partial_{y_i}\Big(\left(g^{ij}-g_\circ^{ij}\right)\jac \Big)
	=\varepsilon \partial_{s_i}\Big(\left(g^{ij}-g_\circ^{ij}\right)\jac \Big)	
	=O(\varepsilon)
\]
because, by definition, $(g_\circ)_{ij}(t) = g_{ij}(0,t)$. Thus
\begin{equation}\label{eq: tilde(Delta)-tilde(Delta)_0 2}
	\left| \left\langle \left(g^{ij}-g_\circ^{ij}\right)\partial^2_{y_iy_j}(\tilde{u}),\tilde{u} \right\rangle_{\tilde{L}^2(\tildeOmega)} \right| \leq C \varepsilon ||\nabla_y \tilde{u}||_{\tilde{L}^2(\tildeOmega)} \left( ||\nabla_y \tilde{u}||_{\tilde{L}^2(\tildeOmega)}+||\tilde{u}||_{\tilde{L}^2(\tildeOmega)} \right)
\end{equation}
Keeping in mind that $g_{ti}(0,t)=0$ for all $t \in [-\tau_0,t_0]$, with the same argument we find
\begin{equation}\label{eq: tilde(Delta)-tilde(Delta)_0 3}
	\left| \left\langle g^{ti}\partial^2_{y_i x}(\tilde{u}),\tilde{u} \right\rangle_{\tilde{L}^2(\tildeOmega)} \right| \leq C \varepsilon ||\partial_x \tilde{u}||_{\tilde{L}^2(\tildeOmega)} \left( ||\nabla_y \tilde{u}||_{\tilde{L}^2(\tildeOmega)}+||\tilde{u}||_{\tilde{L}^2(\tildeOmega)} \right)
\end{equation}
Combining (\ref{eq: tilde(Delta)-tilde(Delta)_0 1}), (\ref{eq: tilde(Delta)-tilde(Delta)_0 2}), (\ref{eq: tilde(Delta)-tilde(Delta)_0 3}) completes the proof of \Cref{lem: tilde(Delta)-tilde(Delta)_0}.
\end{proof}

\Cref{prop: eigfct of Delta} now easily follows from repeated applications of \Cref{lem: perturbation}.

\begin{proof}[Proof of \Cref{prop: eigfct of Delta}]
We split the proof into a sequence of claims.

\smallskip
\noindent
\textit{Claim 1. We have, for all $i=1,\dots,k$ and all $\varepsilon >0$ small enough,
\[
	\big|\big|\partial_x \uo_i \big|\big|_{\tilde{L}_\circ^2(\tildeOmega)}=O\big( \delta^{-1/6}\big)
	\quad \text{and} \quad
	\big|\big|\nabla_y \uo_i \big|\big|_{\tilde{L}_\circ^2(\tildeOmega)}=O(1).
\]
}

It is possbile to improve the bound for $\partial_x \uo_i$ to $||\partial_x \uo_i||_{\tilde{L}_\circ^2(\tildeOmega)}=O(1)$ (see \Cref{lem: exponential decay}). 

\smallskip
\noindent
\textit{Proof of Claim 1.}
The bound for $\partial_x \uo_i$ was already proven in (\ref{eq: L^2 bound partial_x tilde(u)_0 - weak}). Using the observation that there exists $c \in (0,1)$ such that $\langle \Go^{-1}(t)\xi,\xi \rangle \geq c|\xi|^2$ for all $\xi \in \bbR^{n-1}$ and all $t \in [-\tau_0,t_0]$, the same proof also implies the desired bound for $\nabla_y \uo_i$. 
\hfill$\blacksquare$

\smallskip
\noindent
\textit{Claim 2. We have, for all $\varepsilon > 0$ small enough,
\[
	\tilde{\lambda}_k \leq \lambdao_k + C \delta^{1/6}.
\]
In particular, $\tilde{\lambda}_i=O\big(\delta^{-1/3}\big)$ for all $i=1, \dots, k$.}

\smallskip
\noindent
\textit{Proof of Claim 2.}
Since $\jac$ is smooth, hence Lipschitz, for $(t,s) \in [-\tau_0,t_0] \times B_\varepsilon(0)$, we have $|\jac-\jaco|=O(\varepsilon)$. Hence $\langle \cdot \, , \cdot \rangle_\circ \coloneq \langle \cdot \, , \cdot \rangle_{\tilde{L}_\circ^2(\tildeOmega)}$ and $\langle \cdot \, , \cdot \rangle\coloneq \langle \cdot \, , \cdot \rangle_{\tilde{L}^2(\tildeOmega)}$ satisfy (\ref{eq: inner products almost equivalent}) for some $\varepsilon_{\rm comp}=O(\varepsilon)$. Therefore, the upper bound in (\ref{eq: eigenvalue perturbation}) implies
\[
	\tilde{\lambda}_k - \lambdao_k \leq C \varepsilon \lambdao_k + \max_{\tilde{u} \in S_k^\circ} \frac{\left|\left \langle \left(\tildeDelta-\tildeDeltao\right)\tilde{u},\tilde{u} \right \rangle_{\tilde{L}^2(\tildeOmega)} \right| }{||\tilde{u}||^2_{\tilde{L}^2(\tildeOmega)}},
\]
where $S_k^\circ \coloneq {\rm span}\big\{\uo_1, \dots, \uo_k \big\}$. It follows from the triangle inequality that Claim 1 also holds for all $\tilde{u} \in S_k^\circ$ with $||\tilde{u}||_{\tilde{L}_\circ^2(\tilde{\Omega})}=1$. Therefore, \Cref{lem: tilde(Delta)-tilde(Delta)_0} implies that, for all $\tilde{u} \in S_k^\circ$ with $||\tilde{u}||_{\tilde{L}_\circ^2(\tilde{\Omega})}=1$, we have
\begin{equation}\label{eq: Difference of Laplacians 1}
	\left|\left \langle \left(\tildeDelta-\tildeDeltao\right)\tilde{u},\tilde{u} \right \rangle_{\tilde{L}^2(\Omega)} \right|
	=O\Big(\delta^{2/3}\varepsilon^{-1}+\delta^{1/3}\delta^{-1/6}+ \delta^{1/3}\delta^{-1/6}+\delta^{2/3}\varepsilon^{-1}\Big)	= O\big( \delta^{1/6} \big),
\end{equation}
where we also used that $||\cdot||_{\tilde{L}^2(\Omega)}$ and $|\cdot||_{\tilde{L}_\circ^2(\Omega)}$ are uniformly equivalent. This completes the proof of Claim 2 since $\lambdao_k=O\big(\delta^{-1/3}\big)$ by \Cref{prop: Separation of variables}(i) and \Cref{prop: Model ODE and Airy}(i).
\hfill$\blacksquare$

We can now show that the analog of Claim 1 also holds for the eigenfunctions of $\tildeDelta$.

\smallskip
\noindent
\textit{Claim 3. We have, for all $i=1,\dots,k$ and all $\varepsilon >0$ small enough,
\[
	\big|\big|\partial_x \tilde{u}_i \big|\big|_{\tilde{L}^2(\tildeOmega)}=O\big( \delta^{-1/6}\big)
	\quad \text{and} \quad
	\big|\big|\nabla_y \tilde{u}_i \big|\big|_{\tilde{L}^2(\tildeOmega)}=O(1).
\]
}

\smallskip
\noindent
\textit{Proof of Claim 3.}
Testing $-\tildeDelta \tilde{u}_i = \tilde{\lambda}_i \tilde{u}_i$ against $\frac{\jac}{\jaco(t_0)} \tilde{u}_i$ yields, as $||\tilde{u}_i||_{\tilde{L}^2(\tildeOmega)}=1$,
\[
	\int_{\tildeOmega}\frac{\jac}{\jaco(t_0)}\left(|\partial_x \tilde{u}_i|^2 + \delta^{2/3}\varepsilon^{-2}\langle G^{-1} \nabla_y \tilde{u}_i, \nabla_y \tilde{u}_i \rangle+2\delta^{1/3}\varepsilon^{-1}g^{ti}\partial_{y_i}(\tilde{u}_i) \partial_x (\tilde{u}_i) \right) \, dx dy = \tilde{\lambda}_i,
\]
where $G=(g_{ij}) \in {\rm GL}_{n-1}(\bbR)$. Keep in mind that $g_{ti}(0,t)=0$ for all $t \in [-\tau_0,t_0]$, so that 
\begin{equation}\label{eq: g^(ti)}
	g^{ti}=O(\varepsilon).
\end{equation}
Moreover, there exists $c=c(t_0,\tau_0) \in (0,1)$ such that $\langle G^{-1}\xi,\xi \rangle \geq c|\xi|^2$ for all $\xi \in \bbR^{n-1}$. Hence we obtain, for all $\varepsilon > 0$ small enough,
\[
	\frac{1}{2}\int_{\tildeOmega}\frac{\jac}{\jaco(t_0)}\left(|\partial_x \tilde{u}_i|^2 + \delta^{2/3}\varepsilon^{-2}c|\nabla_y \tilde{u}_i|^2 \right) \, dx dy 
	\leq \tilde{\lambda}_i.
\]
Finally, we know $\tilde{\lambda}_i=O\big(\delta^{-1/3}\big)$ from Claim 2 and $\varepsilon^{2}\delta^{-2/3}=\big(\delta^{1/3}\big)$ from (\ref{eq: def delta}). This implies the desired estimates.
\hfill$\blacksquare$

\smallskip
\noindent
\textit{Claim 4. We have, for all $\varepsilon > 0$ small enough,
\[
	\big| \tilde{\lambda}_k - \lambdao_k \big| = O\big(\delta^{1/6}\big).
\]}

Note that this is \Cref{prop: eigfct of Delta}(i).

\smallskip
\noindent
\textit{Proof of Claim 4.}
Using Claim 3 instead of Claim 1, and Claim 2 instead of the upper bound $\lambdao_k = O\big(\delta^{-1/3}\big)$, with the exact same argument as in the proof of Claim 2 we obtain
\begin{equation}\label{eq: Difference of Laplacians 2}
	\left|\left \langle \left(\tildeDelta-\tildeDeltao\right)\tilde{u},\tilde{u} \right \rangle_{\tilde{L}_{\circ}^2(\tildeOmega)} \right| = O\big( \delta^{1/6} \big),
\end{equation}
for all $\tilde{u} \in {\rm span}\big\{\tilde{u}_1, \dots, \tilde{u}_k\big\}$ with $||\tilde{u}||_{\tilde{L}^2(\tildeOmega)}=1$. So, as in the proof of Claim 2, Claim 4 follows from the lower bound in (\ref{eq: eigenvalue perturbation}).
\hfill$\blacksquare$

Observe that, for all $\varepsilon > 0$ small enough, $\lambdao_{i+1}-\lambdao_i$ is uniformly bounded from below for all $i=1,\dots,k$. Indeed, this follows from \Cref{prop: Separation of variables}(i) and (\ref{eq: eigenvalue gap ODE}). We also recall that we can choose the $\varepsilon_{\rm comp}$ in the assumption (\ref{eq: inner products almost equivalent}) for \Cref{lem: perturbation} such that $\varepsilon_{\rm comp}=O(\varepsilon)$ (see the proof of Claim 2). Therefore, (\ref{eq: eigenvector perturbation}) in \Cref{lem: perturbation} and (\ref{eq: Difference of Laplacians 1}), (\ref{eq: Difference of Laplacians 2}) imply
\[
	||\tilde{u}_k - \uo_k||^2_{\tilde{L}_\circ^2(\tildeOmega)}=O\big(\delta^{1/6}\big).
\]
Since $||\cdot||_{L^2(\tildeOmega)}$ and $||\cdot||_{\tilde{L}_\circ^2(\tildeOmega)}$ are uniformly equivalent, this is exactly \Cref{prop: eigfct of Delta}(ii), and thus completes the proof.
\end{proof}

For the proof of \Cref{Main Theorem} we will also need the following exponential decay estimate.

\begin{lem}[Exponential decay]\label{lem: exponential decay}
For all $t_0,\tau_0 > 0$ satisfying (\ref{eq: def tau}) and $k \in \bbN$ there exist $C > 0$, $x^\ast > 0$, and $\varepsilon_0 > 0$ such that for all $\varepsilon \in (0,\varepsilon_0]$ we have
\begin{equation}\label{eq: exponential decay}
	\int_{B_1(0)} \tilde{u}_k^2(x,y)\, dy \leq Ce^{-x}
	\quad \text{for all } x \in \big[x^\ast,\delta^{-1/3}(t_0+\tau_0)\big].
\end{equation}
In particular, for every polynomial $Q \in \bbR[X]$ there exists a constant $C_Q < \infty$ such that
\begin{equation}\label{eq: weighted integral}
	\int_{\tildeOmega}|Q|(x) \tilde{u}_k^2(x,y)  \, dx dy \leq C_Q
\end{equation}
and
\begin{equation}\label{eq: weighted difference integral}
	\int_{\tildeOmega}|Q|(x) \left| \tilde{u}_k^2-(\uGuess_k)^2\right|(x,y)  \, dx dy \leq C_Q\delta^{1/12} ,
\end{equation}
where $\uGuess_k$ is the function defined in (\ref{eq: u Guess}).
\end{lem}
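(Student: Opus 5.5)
The plan is to derive a differential inequality in $x$ for the slice energy $m(x) \coloneq \int_{B_1(0)} \tilde{u}_k^2(x,y)\,dy$, mimicking Claim 3 in the proof of \Cref{prop: Model ODE and Airy}. We use the coordinate form of $\tildeDelta$ obtained in the proof of \Cref{lem: tilde(Delta)-tilde(Delta)_0}:
\[
	\tildeDelta = \partial_{xx}^2 + \delta^{2/3}\varepsilon^{-2} g^{ij}\partial^2_{y_iy_j} - 2\delta^{1/3}\varepsilon^{-1} g^{ti}\partial^2_{y_ix} + O(\delta^{1/3})\partial_x + O(\delta^{2/3}\varepsilon^{-1})\partial_{y_j}.
\]
Differentiating twice under the integral gives
\[
	\tfrac12 m''(x) = \int_{B_1}(\partial_x\tilde{u}_k)^2\,dy + \int_{B_1}\tilde{u}_k\,\partial^2_{xx}\tilde{u}_k\,dy .
\]
We then substitute $\partial^2_{xx}\tilde{u}_k = -\tilde{\lambda}_k\tilde{u}_k - (\text{the other terms})$ and integrate by parts in $y$, which has no boundary terms by the Dirichlet condition on $\partial B_1$. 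The main term becomes
\[
	\delta^{2/3}\varepsilon^{-2}\int_{B_1} \langle G^{-1}\nabla_y\tilde{u}_k,\nabla_y\tilde{u}_k\rangle\,dy .
\]
At each slice, this is compared from below by $\delta^{2/3}\varepsilon^{-2}\mu_1(t_0-\delta^{1/3}x)\,m(x)$, up to an $O(\varepsilon)$ relative error, since $G^{-1}-\Go^{-1}=O(\varepsilon)$. With $\tilde{\lambda}_k = \delta^{2/3}\varepsilon^{-2}\mu_1(t_0)+a_k+O(\delta^{1/6})$ (from \Cref{prop: eigfct of Delta}(i), \Cref{prop: Separation of variables}(i) and \Cref{prop: Model ODE and Airy}(i)), this yields the effective potential
\[
	\tilde{c}_k(x) = \delta^{2/3}\varepsilon^{-2}\bigl(\mu_1(t_0-\delta^{1/3}x)-\mu_1(t_0)\bigr) - a_k + O(\delta^{1/6}).
\]
Exactly as in Claim 3, $\tilde{c}_k(x)\ge 4$ on $[x^\ast,R]$ with $x^\ast=a_k+10$. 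This uses \Cref{lem: Monotonicity of mu_1} on $[x^\ast,\delta^{-1/3}t_0]$ and the choice (\ref{eq: def tau}) of $\tau_0$ on the remaining range.

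\textbf{Main obstacle: the error terms.} The $O(\varepsilon)$ relative error in the $y$-form, multiplied by $\delta^{2/3}\varepsilon^{-2}\sim\varepsilon^{-2/3}$, is $O(\varepsilon^{1/3})\,\delta^{2/3}\varepsilon^{-2}\int|\nabla_y\tilde{u}_k|^2$. It can be absorbed into the positive $y$-form itself, since we keep a small fraction of it in reserve. The mixed term $\delta^{1/3}\varepsilon^{-1}g^{ti}\partial^2_{y_ix}\tilde{u}_k$ has $g^{ti}=O(\varepsilon)$ by (\ref{eq: g^(ti)}); after integrating by parts in $y$ it is bounded by
\[
	C\delta^{1/3}\int|\partial_x\tilde{u}_k||\nabla_y\tilde{u}_k| + C\delta^{1/3}\int|\partial_x\tilde{u}_k||\tilde{u}_k| .
\]
This is absorbed into $\tfrac14\int(\partial_x\tilde{u}_k)^2 + \tfrac14\delta^{2/3}\varepsilon^{-2}c\int|\nabla_y\tilde{u}_k|^2 + m$, using $\delta^{2/3}\ll\delta^{2/3}\varepsilon^{-2}$. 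The first-order terms are handled similarly.

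The outcome is the pointwise (in $x$) inequality
\[
	\tfrac12 m'' \ge \tfrac12\int_{B_1}(\partial_x\tilde{u}_k)^2\,dy + \tfrac{\tilde{c}_k}{2}\,m \ge 2m \quad \text{on } [x^\ast,R].
\]
Here $m\ge0$, $m(R)=0$, and $m(x^\ast)\le C$. The $C$ comes from a trace-type bound $\sup_x m(x)\le C\|\tilde{u}_k\|\,\|\partial_x\tilde{u}_k\| = O(\delta^{-1/6})$, which is not uniform. To avoid this, I would instead choose $x^\ast$ with $m(x^\ast)\le 1$, which is possible by averaging, since $\int m\,dx\le C$, shifting $x^\ast$ into a unit interval. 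A maximum principle comparison with $Ce^{-2(x-x^\ast)}+Ce^{2(x-R)}$ then gives $m(x)\le Ce^{-x}$ on $[x^\ast,R]$. Regularity needed to differentiate $m$ twice comes from \Cref{rem: H^2 up to boundary}, arguing in the weak (distributional) sense if necessary.

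\textbf{Consequences.} (\ref{eq: weighted integral}) follows by splitting $\int|Q|\,m\,dx$ over $[0,x^\ast]$, where we use $\|\tilde{u}_k\|=1$, and over $[x^\ast,R]$, where we use the exponential decay. For (\ref{eq: weighted difference integral}) we write
\[
	|\tilde{u}_k^2-(\uGuess_k)^2| = |\tilde{u}_k-\uGuess_k|\,|\tilde{u}_k+\uGuess_k|
\]
and apply Cauchy--Schwarz with weight $|Q|$. This needs $\| |Q|^{1/2}(\tilde{u}_k+\uGuess_k)\|$ bounded, which follows from (\ref{eq: weighted integral}) together with Claim 3 in the proof of \Cref{prop: Model ODE and Airy} for $\uGuess_k=\tilde{h}_k\tilde{f}_1^{(x)}$. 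It also needs $\| |Q|^{1/2}(\tilde{u}_k-\uGuess_k)\|$ small. For that we split at $x=M\log(1/\delta)$. Below this cutoff, $|Q|\le C(\log\delta^{-1})^{d}$ times the $L^2$ bound $O(\delta^{1/12})$, from \Cref{prop: eigfct of Delta}(ii) and \Cref{prop: Separation of variables}(ii). Above it, exponential decay gives $O(\delta^{M'})$. The logarithmic loss is removed by first running the argument with $2|Q|^2$ in place of $|Q|$ and using $\int |Q|\,|a-b||a+b| \le \|a-b\|\,\|Q(a+b)\|$, which directly gives $C_Q\delta^{1/12}$.
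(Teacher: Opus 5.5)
Your proposal follows the paper's proof essentially step by step. The ingredients match: the slice mass $m(x)=\int_{B_1}\tilde{u}_k^2\,dy$; the inequality $m''\geq 4m$ on $[a_k+10,\delta^{-1/3}(t_0+\tau_0)]$, obtained from $\mu_1$, the eigenvalue asymptotics for $\tilde{\lambda}_k$ and the choice (\ref{eq: def tau}) of $\tau_0$; an averaging choice of $x^\ast$ with $m(x^\ast)\leq C$; the Dirichlet condition at $x=\delta^{-1/3}(t_0+\tau_0)$; and Cauchy--Schwarz with the polynomial weight placed on $\tilde{u}_k+\uGuess_k$.

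Two bookkeeping slips need fixing. First, every $\nabla_y$-error may cost only an $O(\varepsilon)$-fraction of the $y$-form, as in the paper's factor $(1-C\varepsilon)$. Your wording allows a fixed fraction such as $\tfrac14$, or an $O(\varepsilon^{1/3})$-fraction, and either would fail: $\delta^{2/3}\varepsilon^{-2}\mu_1(t_0)\sim\varepsilon^{-2/3}$ must cancel against $\tilde{\lambda}_k$ up to an $O(1)$ remainder. Second, the comparison barrier should be $m(x^\ast)e^{-2(x-x^\ast)}$ alone. The extra term $Ce^{2(x-R)}$ is of size $C$ near $x=\delta^{-1/3}(t_0+\tau_0)$ and would destroy the claimed decay there; it is unnecessary because $m$ vanishes at that endpoint.
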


\begin{proof}
Observe that (\ref{eq: weighted integral}) follows easily from (\ref{eq: exponential decay}), and that the analogue of (\ref{eq: weighted integral}) also holds for $\uGuess_k$ due to the definition (\ref{eq: u Guess}) of $\uGuess_k$ and Claim 3 in the proof of \Cref{prop: Model ODE and Airy}. Moreover, for any polynomial $Q \in \bbR[X]$, we have
\[
    \int_{\tildeOmega}|Q|(x) \left| \tilde{u}_k^2-(\uGuess_k)^2\right|(x,y)  \, dx dy
    \leq \big(\left|\left|Q\tilde{u}_k \right|\right|_{L^2}
    +\left|\left|Q\uGuess_k \right|\right|_{L^2}\big)
    \left|\left|\tilde{u}_k-\uGuess_k \right|\right|_{L^2}
\]
because of the third binomial formula, the Cauchy-Schwarz inequality, and the triangle inequality. Because we already know $||\tilde{u}_k-\uGuess_k ||_{L^2} \leq C \delta^{1/12}$ thanks to \Cref{prop: Separation of variables}(ii) and \Cref{prop: eigfct of Delta}(ii), the weighted $L^2$-bound (\ref{eq: weighted difference integral}) thus follows from (\ref{eq: weighted integral}), which in turn easily follows from (\ref{eq: exponential decay}). Therefore, it suffices to prove (\ref{eq: exponential decay}).

Consider the slice-energy function $\mathfrak{e}_k$ defined, for $x \in [0,\delta^{-1/3}(t_0+\tau_0)]$, by 
\[
	\mathfrak{e}_k(x) \coloneq \frac{1}{2}||\tilde{u}_k||_{L^2(\{x\} \times B_1)}^2 \coloneq \frac{1}{2}||\tilde{u}_k(x,\cdot)||_{L^2(B_1)} ^2
	\coloneq \frac{1}{2}\int_{B_1(0)} \tilde{u}_k^2(x,y) \, dy.
\]
Below, we will show by a straightforward albeit tedious calculation
\begin{equation}\label{eq: mass decay 0}
	\mathfrak{e}_k^{\prime \prime}(x) \geq \Big(\big(1+O(\varepsilon)\big)\delta^{2/3}\varepsilon^{-2}\mu_1\big(t_0-\delta^{1/3}x\big)-\tilde{\lambda}_k \Big)2\mathfrak{e}_k(x).
\end{equation}
We first explain how (\ref{eq: exponential decay}) follows from (\ref{eq: mass decay 0}). 

Namely, from \Cref{prop: Model ODE and Airy}(i), \Cref{prop: Separation of variables}(i) and \Cref{prop: eigfct of Delta}(i) we know 
\[
	\tilde{\lambda}_k = \delta^{2/3}\varepsilon^{-2}\mu_1(t_0)+a_k + O(\delta^{1/6}).
\]
Moreover, by Taylor and the definition (\ref{eq: def delta}) of $\delta$ we know (see the proof of \Cref{lem: Model ODE is perturbed Airy} for details)
\[
	\delta^{2/3}\varepsilon^{-2}\mu_1(t_0-\delta^{1/3}x)=\delta^{2/3}\varepsilon^{-2}\mu_1(t_0)+x+O\big(\delta^{1/3}x^2\big).
\]
Finally, $x \mapsto \mu_1(t_0-\delta^{1/3}x)$ is monotonically increasing for $x \in [0,\delta^{-1/3}t_0]$ due to \Cref{lem: Monotonicity of mu_1}, and $\mu_1(t) \geq \mu_1(t_0/4)$ for $t \in [-\tau_0,0]$ due to the choice (\ref{eq: def tau}) of $\tau_0$. Therefore, if we set $x_0 \coloneq a_k+10$, then for all $x \in [x_0,\delta^{-1/3}(t_0+\tau_0)]$ and $\varepsilon>0$ small enough, we have
\[
	\big(1+O(\varepsilon)\big)\delta^{2/3}\varepsilon^{-2}\mu_1(t_0-\delta^{1/3}x)-\tilde{\lambda}_k 
	\geq  \big(1+O(\varepsilon)\big)\delta^{2/3}\varepsilon^{-2}\mu_1(t_0-\delta^{1/3}x_0)-\tilde{\lambda}_k
	\geq  2,
\]
and consequently $\mathfrak{e}_k^{\prime \prime}(x) \geq 4\mathfrak{e}_k(x)$ for all $x \in [x_0,\delta^{-1/3}(t_0+\tau_0)]$ by (\ref{eq: mass decay 0}). Appealing to the Sturm comparison theorem we obtain, for all $x_0 \leq x^\ast \leq x \leq \delta^{-1/3}(t_0+\tau_0)$,
\[
	\mathfrak{e}_k(x^\ast) \geq \cosh(2(x-x^\ast))\mathfrak{e}_k(x) \geq \frac{1}{2}e^{2(x-x^\ast)}\mathfrak{e}_k(x)
\]
because $\mathfrak{e}_k\big(\delta^{-1/3}(t_0+\tau_0)\big)=0$ due to the Dirichlet boundary condition. Finally, there exists $x^\ast \in [x_0,x_0+1]$ such that 
\[
	\mathfrak{e}_k(x^\ast) \leq \int_{x_0}^{x_0+1} \mathfrak{e}_k(x) \, dx \leq \frac{1}{2} ||\tilde{u}_k||_{L^2(\tildeOmega)}^2 \leq C.
\]
Together the two previous estimates imply $\mathfrak{e}(x) \leq C e^{-2x}$ for all $x \in [x^\ast,\delta^{-1/3}(t_0+\tau_0)]$, which is exactly the desired decay estimate (\ref{eq: exponential decay}).

It remains to prove (\ref{eq: mass decay 0}). In the remainder, all integrals will be understood to be over $\{x\} \times B_1(0)$, but we will only write $B_1(0)$ for ease of notation.

Clearly, for all $x \in [0,\delta^{-1/3}(t_0+\tau_0)]$, we have
\begin{align}\label{eq: mass decay - second derivative}
	\mathfrak{e}_k^{\prime \prime}(x)
	=&\int_{B_1(0)}\Big(\partial^2_{xx}(\tilde{u}_k)\tilde{u}+|\partial_x \tilde{u}_k|^2 \Big) \, dy \notag \\
	=&-2\tilde{\lambda}_k \mathfrak{e}_k(x)+\int_{B_1(0)}\Big(\big(\partial^2_{xx}(\tilde{u}_k)-\tildeDelta\tilde{u}_k\big)\tilde{u}_k+|\partial_x \tilde{u}_k|^2 \Big) \, dy,
\end{align}
where in the second equality we used $\tildeDelta \tilde{u}_k=-\tilde{\lambda}_k \tilde{u}_k$. We recall
\begin{equation}\label{eq: mass decay - Laplacian}
\begin{split}
	\tildeDelta = \frac{1}{\jac}\bigg(&\jac \partial^2_{xx}+\partial_x(\jac)\partial_x
	+\delta^{2/3}\varepsilon^{-2}\partial_{y_i}\Big(\jac g^{ij}\partial_{y_j}\Big) \\
	&+\delta^{1/3}\varepsilon^{-1}\partial_{x}\Big(\jac g^{ti}\partial_{y_i}\Big)
	+\delta^{1/3}\varepsilon^{-1}\partial_{y_i}\Big(\jac g^{ti}\partial_{x}\Big) \bigg),
\end{split}
\end{equation}
where $g_{ij}$ denotes $g(\partial_{s_i},\partial_{s_j})$, and \emph{not} $g(\partial_{y_i},\partial_{y_j})$.

We will now estimate a number of integrals. Throughout, we will use that
\begin{equation}\label{eq: mass decay - help}
	\partial_x({\rm fct})=O\big(\delta^{1/3}\big)
	\quad \text{and} \quad
	\partial_{y_i}({\rm fct})=O(\varepsilon)
\end{equation}
for ${\rm fct}$ being either of the functions $\jac$, $g^{ij}$, $g^{it}$, or a product thereof. Indeed, this simply follows from the fact that they are all smooth for $(s,t) \in B_\varepsilon(0) \times [-\tau_0,t_0] $, and $\partial_x=-\delta^{1/3}\partial_t$ and $\partial_{y_i}=\varepsilon \partial_{s_i}$ due to the definition (\ref{eq: change of variables}) of the change of variables. 

First, (\ref{eq: mass decay - help}) together with the Poincaré inequality on $B_1(0)$ immediately yields
\begin{align}\label{eq: mass decay 1}
	\left|-\int_{B_1(0)}\frac{\partial_x(\jac)}{\jac}\partial_x(\tilde{u}_k)\tilde{u}_k \, dy \right| 
	\leq & C\delta^{1/3}||\nabla_y \tilde{u}_k||_{L^2(B_1)}||\partial_x \tilde{u}_k||_{L^2(B_1)} \notag \\
	\leq & C \delta^{2/3}||\nabla_y \tilde{u}_k||_{L^2(B_1)}^2 +\frac{1}{4}||\partial_x \tilde{u}_k||_{L^2(B_1)}^2,
\end{align}
where for the second inequality we also used that $2ab \leq a^2+b^2$ for all $a,b \in \bbR$.

Second, from the product rule and integration by parts we get
\begin{align*}
	-\int_{B_1(0)}\frac{1}{\jac}\partial_{x}\Big(\jac g^{ti}\partial_{y_i}(\tilde{u}_k)\Big)\tilde{u}_k \, dy 
	=& -\int_{B_1(0)}\frac{\partial_{x}\big(\jac g^{ti} \big)}{\jac}\partial_{y_i}(\tilde{u}_k)\tilde{u}_k \, dy  \\
	&+ \int_{B_1(0)}\partial_{x}(\tilde{u}_k)\partial_{y_i}\left(g^{ti}\tilde{u}_k\right) \, dy .
\end{align*}
Together with (\ref{eq: mass decay - help}), $g^{it}=O(\varepsilon)$ (see (\ref{eq: g^(ti)})), and the Poincaré inequality, we deduce
\begin{align}\label{eq: mass decay 2}
	\left| -\int_{B_1(0)}\frac{1}{\jac}\partial_{x}\Big(\jac g^{ti}\partial_{y_i}(\tilde{u}_k)\Big)\tilde{u}_k \, dy  \right|
	\leq &  C \delta^{1/3}||\nabla_y \tilde{u}_k||_{L^2(B_1)}^2 \notag\\
	&+C\varepsilon||\nabla_y \tilde{u}_k||_{L^2(B_1)}||\partial_x \tilde{u}_k||_{L^2(B_1)} \notag \\
	\leq & C\delta^{1/3}||\nabla_y \tilde{u}_k||_{L^2(B_1)}^2	+ \frac{\delta^{-1/3}\varepsilon^2}{4}||\partial_x \tilde{u}_k||_{L^2(B_1)}^2.
\end{align}
Similarly, using the divergence theorem, $g^{it}=O(\varepsilon)$, and the Poincaré inequality, we can bound
\begin{align}\label{eq: mass decay 3}
	\left| -\int_{B_1(0)}\frac{1}{\jac}\partial_{y_i}\Big(\jac g^{ti}\partial_{x}(\tilde{u}_k)\Big)\tilde{u}_k \, dy \right|
	\leq & C\varepsilon||\nabla_y \tilde{u}_k||_{L^2(B_1)}||\partial_x\tilde{u}_k||_{L^2(B_1)} \notag \\
	\leq &  C\delta^{1/3}\varepsilon ||\nabla_y \tilde{u}_k||_{L^2(B_1)}^2+\frac{\delta^{-1/3}\varepsilon}{4}||\partial_x\tilde{u}_k||_{L^2(B_1)} ^2 .
\end{align}
Finally, we have, again by the divergence theorem,
\begin{align}
	-\int_{B_1(0)}\frac{1}{\jac}\partial_{y_i}\Big(\jac g^{ij}\partial_{y_j}(\tilde{u}_k)\Big) \tilde{u}_k \, dy
	=& \int_{B_1(0)}\Big(\jac g^{ij}\partial_{y_j}(\tilde{u}_k)\Big) \partial_{y_i}\left(\frac{1}{\jac}\tilde{u}_k\right) \, dy \notag \\
	=& \int_{B_1(0)} g^{ij}\partial_{y_j}(\tilde{u}_k) \partial_{y_i}(\tilde{u}_k)\, dy \label{eq: mass decay 4} \\
	&-\int_{B_1(0)}\frac{\partial_{y_i}(\jac)}{\jac} g^{ij}\partial_{y_j}(\tilde{u}_k)\tilde{u}_k \, dy. \label{eq: mass decay 5}
\end{align}
Since $g_{ij}=(g_\circ)_{ij}+O(\varepsilon)$, we can also bound bound the first summand (\ref{eq: mass decay 4}) by
\begin{equation}\label{eq: mass decay 6}
	\int_{B_1(0)} g^{ij}\partial_{y_j}(\tilde{u}_k) \partial_{y_i}(\tilde{u}_k)\, dy
	\geq \int_{B_1(0)} g_\circ^{ij}\partial_{y_j}(\tilde{u}_k) \partial_{y_i}(\tilde{u}_k)\, dy -C\varepsilon ||\nabla_y \tilde{u}_k||_{L^2(B_1)}^2.
\end{equation}
Due to (\ref{eq: mass decay - help}) and the Poincaré inequality, the second summand (\ref{eq: mass decay 5}) is bounded by
\begin{equation}\label{eq: mass decay 7}
	\left| -\int_{B_1(0)}\frac{\partial_{y_i}(\jac)}{\jac}\jac g^{ij}\partial_{y_j}(\tilde{u}_k)\tilde{u}_k \, dy \right| \leq C\varepsilon ||\nabla_y \tilde{u}_k||_{L^2(B_1)}^2.
\end{equation}
Combining (\ref{eq: mass decay - Laplacian}), and (\ref{eq: mass decay 1})-(\ref{eq: mass decay 7}) implies
\begin{align*}
	\int_{B_1(0)} \big(\partial^2_{xx}(\tilde{u}_k)-\tildeDelta \tilde{u}_k\big)\tilde{u}_k \, dy \geq& \, \delta^{2/3}\varepsilon^{-2} \int_{B_1(0)} g_\circ^{ij}\partial_{y_j}(\tilde{u}_k) \partial_{y_i}(\tilde{u}_k)\, dy \\
	& - C \big(\delta^{2/3}+\delta^{2/3}\varepsilon^{-1}+\delta^{2/3}+ \delta^{2/3}\varepsilon^{-1}\big)||\nabla_y \tilde{u}_k||_{L^2(B_1)}^2 \\
	&-\left(\frac{1}{4}+\frac{\varepsilon}{4}+\frac{1}{4}\right)||\partial_x \tilde{u}_k||_{L^2(B_1)}^2.
\end{align*}
As $\Go^{-1}=(g_\circ^{ij})$ is uniformly elliptic, we can simplify this to
\begin{equation}\label{eq: mass decay 8}
\begin{split}
	\int_{B_1(0)} \big(\partial^2_{xx}(\tilde{u}_k)-\tildeDelta \tilde{u}_k\big)\tilde{u}_k \, dy \geq & (1-C\varepsilon) \delta^{2/3}\varepsilon^{-2} \int_{B_1(0)} \langle \Go^{-1}\nabla_y \tilde{u}_k, \nabla_y \tilde{u}_k \rangle\, dy \\
	&-\frac{3}{4}||\partial_x \tilde{u}_k||_{L^2(B_1)}^2.
\end{split}
\end{equation}
However, by definition, $\mu_1(t)$ is the first eigenvalue of $-\Div_y\big(\Go^{-1}(t)\nabla_y \big)$. Therefore, keeping in mind that $t=t_0-\delta^{1/3}x$ by (\ref{eq: change of variables}), the desired estimate (\ref{eq: mass decay 0}) follows from (\ref{eq: mass decay - second derivative}) and (\ref{eq: mass decay 8}). This completes the proof.
\end{proof}

\subsection{Proof of \Cref{Main Theorem}}\label{subsec: Proof of Main Thm}

We are now finally in the position to prove \Cref{Main Theorem}. As in the formulation of \Cref{Main Theorem}, we let $X$ be a negatively curved Hadamard manifold, and denote by $\Omega_\varepsilon \coloneq \Omega(\varepsilon;-\tau_0,t_0) \subseteq X$ the domain defined in \Cref{def: Domain}. Finally, we denote by $(u_i)_{i \in \bbN} \subseteq L^2(\Omega_\varepsilon)$ a complete orthonormal basis of eigenfunctions of $-\Delta$ in $\Omega_\varepsilon \subseteq X$ with Dirichlet boundary data (labelled in such a way that the associated sequence of eigenvalues $(\lambda_i)_{i \in \bbN}$ is non-decreasing). 

We start with the following asymptotic expansion for the integral in (\ref{eq: core of strategy}).

\begin{cor}\label{cor: asymptotic integral}
For all $t_0, \tau_0 > 0$ satisfying (\ref{eq: def tau}) and $P \in C^2(\bbR)$ with $P^\prime(t_0) > 0$, there exist $C > 0$ and $\varepsilon_0 > 0$ such that for all $\varepsilon \in (0,\varepsilon_0]$ the integral
\[
	{\rm I}_\varepsilon \coloneq \int_{\Omega_\varepsilon}P(t) \left(u_2^2(s,t)-u_1^2(s,t) \right) \, d{\rm vol}_g(s,t)
\]
satisfies
\[
	\left|\frac{\delta^{-1/3}{\rm I}_\varepsilon}{P^\prime(t_0)}+\frac{2}{3}(a_2-a_1)\right|
	\leq C \delta^{1/12},
\]
where $0 > -a_1 > -a_2 > \dots$ are the zeros of the Airy function $\Ai$. In particular, ${\rm I}_\varepsilon < 0$ for all $\varepsilon > 0$ small enough.
\end{cor}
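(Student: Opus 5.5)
We pass to the rescaled coordinates $(x,y)$ from (\ref{eq: change of variables}) and reduce ${\rm I}_\varepsilon$ to a weighted integral of $\tilde{u}_2^2-\tilde{u}_1^2$. Then we Taylor expand $P$ around $t_0$ and use \Cref{lem: exponential decay} to replace $\tilde{u}_k$ by $\uGuess_k$. Finally, \Cref{prop: Model ODE and Airy}(ii) produces the Airy model integral.

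\emph{Step 1 (rescaling).} Since $d{\rm vol}_g=\jac(s,t)\,ds\,dt$ and $ds\,dt=\varepsilon^{n-1}\delta^{1/3}\,dy\,dx$, the normalized eigenfunctions satisfy $u_k=c_\varepsilon\tilde{u}_k$ with a common constant $c_\varepsilon$ fixed by $\|\tilde u_k\|_{\tilde L^2(\tildeOmega)}=1$. Concretely, $c_\varepsilon^2\,\varepsilon^{n-1}\delta^{1/3}\jaco(t_0)=1$, where we choose signs so that $\tilde u_k$ corresponds to $u_k$. This is legitimate because the first two eigenvalues are simple by (\ref{eq: eigenvalue gap PDE}). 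Hence
\[
	{\rm I}_\varepsilon=\int_{\tildeOmega}P\big(t_0-\delta^{1/3}x\big)\big(\tilde u_2^2-\tilde u_1^2\big)\frac{\jac(\varepsilon y,t_0-\delta^{1/3}x)}{\jaco(t_0)}\,dx\,dy .
\]
Using orthonormality in $\tilde L^2(\tildeOmega)$, we have $\int(\tilde u_2^2-\tilde u_1^2)\frac{\jac}{\jaco(t_0)}=0$. We may therefore replace $P(t)$ by $P(t)-P(t_0)$.

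\emph{Step 2 (Taylor).} Since $P\in C^2$, on $[-\tau_0,t_0]$ we have $P(t_0-\delta^{1/3}x)-P(t_0)=-P'(t_0)\delta^{1/3}x+O(\delta^{2/3}x^2)$, uniformly in $x$. Moreover, $\jac(\varepsilon y,t_0-\delta^{1/3}x)/\jaco(t_0)=1+O(\varepsilon+\delta^{1/3}x)$. Multiplying out, we get
\[
	\delta^{-1/3}{\rm I}_\varepsilon=-P'(t_0)\int_{\tildeOmega}x\big(\tilde u_2^2-\tilde u_1^2\big)\,dx\,dy+O\Big(\delta^{1/3}+\varepsilon\Big)\int_{\tildeOmega}(1+x+x^2)\big(\tilde u_1^2+\tilde u_2^2\big)\,dx\,dy .
\]
By (\ref{eq: weighted integral}), the error term is $O(\delta^{1/3})$; note that $\varepsilon=O(\delta^{1/2})$ by (\ref{eq: def delta}).

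\emph{Step 3 (replace by separated guesses).} By (\ref{eq: weighted difference integral}) with $Q=x$, we may replace $\tilde u_k^2$ by $(\uGuess_k)^2=\tilde h_k(x)^2\tilde f_1^{(x)}(y)^2$ at a cost of $O(\delta^{1/12})$. Since $\|\tilde f_1^{(x)}\|_{L^2(B_1)}=1$, integrating in $y$ leaves $\int_0^{R}x(\tilde h_2^2-\tilde h_1^2)\,dx$. By \Cref{prop: Model ODE and Airy}(ii), this equals $\frac23(a_2-a_1)+O(\delta^{1/3})$. Dividing by $P'(t_0)>0$ and collecting errors gives the bound $C\delta^{1/12}$. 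Negativity of ${\rm I}_\varepsilon$ then follows, because $a_2>a_1$ by \Cref{lem: model integral}.

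The main obstacle is controlling the error terms involving the polynomial weights in $x$, since $x$ ranges up to $\delta^{-1/3}$. This control is exactly what the exponential decay in \Cref{lem: exponential decay} provides, together with the $\delta^{1/12}$ rate in (\ref{eq: weighted difference integral}). The remaining bookkeeping of normalization constants and signs in Step 1 is routine but must be done carefully.
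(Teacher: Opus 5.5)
Your proposal is correct and follows essentially the same route as the paper: the same rescaling with normalization constant $\jaco(t_0)\delta^{1/3}\varepsilon^{n-1}$, subtraction of $P(t_0)$ via orthonormality, Taylor expansion of $P$ and $\jac$ with the errors absorbed by (\ref{eq: weighted integral}) and $\varepsilon=O(\delta^{1/2})$, and finally (\ref{eq: weighted difference integral}) combined with \Cref{prop: Model ODE and Airy}(ii). Your explicit remark is a detail the paper leaves implicit: by (\ref{eq: eigenvalue gap PDE}), $\tilde\lambda_1$ and $\tilde\lambda_2$ are simple, so the rescaled $u_k$ coincide with the $\tilde u_k$ of \Cref{prop: eigfct of Delta} up to a sign, which is irrelevant here because only $u_k^2$ enters.
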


In particular, this holds for the potential $P$ defined in \Cref{def: Potential}.

\begin{proof}
Observe that, for all $i \in \bbN$, $\tilde{u}_i(x,y) \coloneq n_i u_i(\varepsilon y,t_0-\delta^{1/3}x)$ is an eigenfunction of $-\tildeDelta$ in $\tildeOmega_\varepsilon$ with Dirichlet boundary data. Here $n_i \in \bbR_{>0}$ is a renormalization constant. From the definition (\ref{eq: change of variables}) of the change of variables we have
\[
	dt=-\delta^{1/3}dx
	\quad \text{and} \quad 
	ds=\varepsilon^{n-1}dy.
\] 
It then easily follows from the definition (\ref{eq: def weighted L^2 norm xy}) of $\tilde{L}^2(\tildeOmega_\varepsilon)$ that $(\tilde{u}_i)_{i \in \bbN} \subseteq \tilde{L}^2(\tildeOmega_\varepsilon)$ form a complete orthonormal basis of eigenfunctions if we choose the renormalization constant by requiring, for all $i \in \bbN$,
\[
	n_i^2 = \jaco(t_0)\delta^{1/3}\varepsilon^{n-1}.
\]
Using that the $u_i \in L^2(\Omega_\varepsilon)$ are $L^2$-normalized, we can thus rewrite the integral ${\rm I}_\varepsilon$ as
\begin{align*}
	{\rm I}_\varepsilon
	= & \int_{\Omega_\varepsilon}P(t)\left(u_2^2(s,t)-u_1^2(s,t) \right) \jac(t,s) \, dsdt \\
	= & \int_{\Omega_\varepsilon}\big(P(t)-P(t_0)\big)\left(u_2^2(s,t)-u_1^2(s,t) \right) \jac(t,s) \, dsdt \\
	= & \delta^{1/3}\varepsilon^{n-1} \int_{\tildeOmega_\varepsilon}\big(P(t_0-\delta^{1/3}x)-P(t_0)\big)\left(\frac{\tilde{u}_2^2(x,y)}{n_2^2}-\frac{\tilde{u}_1^2(x,y)}{n_1^2}\right) \jac(\varepsilon y,t_0-\delta^{1/3}x) \, dxdy \\
	= & \int_{\tildeOmega_\varepsilon}\big(P(t_0-\delta^{1/3}x)-P(t_0)\big)\left(\tilde{u}_2^2(x,y)-\tilde{u}_1^2(x,y)\right) \frac{\jac(\varepsilon y,t_0-\delta^{1/3}x)}{\jaco(t_0)} \, dxdy.
\end{align*}
Now, by Taylor,
\[
	P(t_0-\delta^{1/3}x)-P(t_0) = -P^\prime(t_0)\delta^{1/3}x+O\big(\delta^{2/3}x^2 \big).
\]
Similarly, as $\jac$ is smooth, hence Lipschitz, for $(s,t) \in  B_\varepsilon(0) \times [-\tau_0,t_0]$, we have
\[
	\frac{\jac(\varepsilon y,t_0-\delta^{1/3}x)}{\jaco(t_0)}=1+O\big(\varepsilon +  \delta^{1/3}x\big).
\]
Thus
\begin{align*}
	\frac{\delta^{-1/3}{\rm I}_\varepsilon}{P^\prime(t_0)} 
	=& \int_{\tildeOmega_\varepsilon}
	\left(-x+O\big(\delta^{1/3}x^2\big)\right)
	\left(\tilde{u}_2^2(x,y)-\tilde{u}_1^2(x,y)\right) 
	\left( 1+O\big(\varepsilon +  \delta^{1/3}x\big)\right) \, dxdy \\
	=& \int_{\tildeOmega_\varepsilon}
	(-x)
	\left(\tilde{u}_2^2(x,y)-\tilde{u}_1^2(x,y)\right)\, dxdy + O\big(\delta^{1/3}\big),
\end{align*}
where in the second equality we used (\ref{eq: weighted integral}) from \Cref{lem: exponential decay}, and the fact that $\varepsilon=O\big(\delta^{1/2}\big)$ due to the definition (\ref{eq: def delta}) of $\delta$. From (\ref{eq: weighted difference integral}) in \Cref{lem: exponential decay}, the definition (\ref{eq: u Guess}) of $\uGuess_i$, and \Cref{prop: Model ODE and Airy}(ii), we thus finally obtain
\begin{align*}
	\int_{\tildeOmega_\varepsilon}
	(-x)
	\left(\tilde{u}_2^2-\tilde{u}_1^2\right)(x,y)\, dxdy
	= & \int_{\tildeOmega_\varepsilon}
	(-x)
	\left((\uGuess_2)^2-(\uGuess_1)^2\right)(x,y)\, dxdy + O\big(\delta^{1/12} \big) \\
	=& \int_0^{(t_0+\tau_0)\delta^{-1/3}}(-x)\left(\tilde{h}_2^2(x)-\tilde{h}_1^2(x)\right) \, dx + O\big(\delta^{1/12} \big)  \\
	=& -\frac{2}{3}(a_2-a_1) + O\big(\delta^{1/12} \big) ,
\end{align*}
where $\tilde{h}_1$ and $\tilde{h}_2$ are the first two eigenfunctions of the model ODE (\ref{eq: Model ODE}). This completes the proof.
\end{proof}

\Cref{Main Theorem} is now immediate.

\begin{proof}[Proof of \Cref{Main Theorem}]
Fix a diameter $D_0 > 0$. Set $t_{\pm } \coloneq \frac{10 \pm 1}{10}D_0$, and fix some
\[
	\tau_0 \in \left(0,\frac{1}{10}D_0\right)
	\quad \text{such that} \quad
	\mu_1(-\tau_0) \geq \mu_1(t_{-}/4),
\]
so that, due to \Cref{lem: Monotonicity of mu_1}, the condition (\ref{eq: def tau}) is satisfied for all $t_0 \in [t_{-},t_{+}]$ (recall that such $\tau_0$ exists thanks to \Cref{lem: Monotonicity of mu_1}). Then, there exists $\varepsilon_0 > 0$ (only depending on $D_0$) such that the domain $\Omega(\varepsilon;-\tau_0,t_0)$  defined in \Cref{lem: convex domain} is convex for all $\varepsilon \in (0,\varepsilon_0]$ and all $t_0 \in [t_{-},t_{+}]$ (see \Cref{rem: uniform constants}). We also fix $t_{P} \coloneq \frac{D_0}{3}$, and let $P$ be the potential defined in \Cref{def: Potential} for this $t_P$. Then, after potentially decreasing $\varepsilon_0$, $P$ is convex on $\Omega(\varepsilon;-\tau_0,t_0)$ for all $\varepsilon \in (0,\varepsilon_0]$ and all $t_0 \in [t_{-},t_{+}]$ because of \Cref{lem: convex potential}. Finally, it follows from \Cref{cor: asymptotic integral} that, again after potentially decreasing $\varepsilon_0$, the integral in (\ref{eq: core of strategy}) is negative for all $\varepsilon \in (0,\varepsilon_0]$ and all $t_0 \in [t_{-},t_{+}]$. Therefore, for all $\varepsilon \in (0,\varepsilon_0]$ and all $t_0 \in [t_{-},t_{+}]$, there exists a convex potential on $\Omega(\varepsilon;-\tau_0,t_0)$ whose fundamental gap is strictly smaller than that of the constant potential as a consequence of the Hellmann--Feynman identity (see the discussion after (\ref{eq: core of strategy})).

It remains to show that one of the domains $\Omega(\varepsilon;-\tau_0,t_0)$ has diameter exactly $D_0$. To see this observe that
\[
	t_0+\tau_0 \leq {\rm diam}\big(\Omega(\varepsilon;-\tau_0,t_0)\big) \text{ for all }\varepsilon > 0
	\quad \text{and} \quad
	\lim_{\varepsilon \to 0}{\rm diam}\big(\Omega(\varepsilon;-\tau_0,t_0)\big)=t_0+\tau_0.
\]
So, due to the definitions of $t_{\pm}$ and $\tau_0$, we have, after potentially decreasing $\varepsilon_0$,
\[
	{\rm diam}\big(\Omega(\varepsilon_0;-\tau_0,t_{-})\big) < D_0
	\quad \text{and} \quad
	{\rm diam}\big(\Omega(\varepsilon_0;-\tau_0,t_{+})\big) > D_0.
\]
Since the map $t \mapsto {\rm diam}\big(\Omega(\varepsilon;-\tau_0,t)\big)$ is clearly continuous, we find $t_0 \in (t_{-},t_{+})$ such that ${\rm diam}\big(\Omega(\varepsilon;-\tau_0,t_0)\big)=D_0$. This completes the proof.
\end{proof}



\end{document}